\documentclass{article}
\usepackage[utf8]{inputenc}

\usepackage{amssymb}
\usepackage{amsmath,mathtools}
\usepackage{amsthm}
\usepackage{amsfonts}
\usepackage{tikz}
\usepackage{xcolor}
\usepackage[top=1.0in, bottom=1.0in, left=1.0in, right=1.0in, headheight=1.0in]{geometry}
\usepackage{mathtools}
\usepackage{youngtab}
\usepackage{graphicx}
\usepackage{multicol}
\usepackage{paracol}
\graphicspath{ {./Graphs/} }
\setlength\parindent{0.5in}

\usepackage[nottoc,notlof,notlot]{tocbibind}
\bibliographystyle{plain}
\usepackage{changepage}

\newtheorem{theorem}{Theorem}[section]
\newtheorem{lemma}[theorem]{Lemma}
\newtheorem{definition}[theorem]{Definition}

\newtheorem{corollary}[theorem]{Corollary}
\newtheorem{remark}[theorem]{Remark}

\newcommand{\I}{\mathcal{I}}
\newcommand{\U}{\mathcal{U}}
\newcommand{\NN}{\mathcal{N}}

\newcommand{\inv}{I}
\newcommand{\GI}{G^\inv}

\newcommand{\KSG}{K \cup S}

\newcommand{\cross}{\text{cdeg}}

\newcommand{\crosspair}{\{\alpha, \beta\}}
\newcommand{\redpair}{\{\alpha', \beta'\}}

\date{\today  }
\title{The Hereditary Closure of the Unigraphs}

\author{Michael D. Barrus\\
\small Dept. of Mathematics and Applied Mathematical Sciences\\
\small University of Rhode Island\\
\small Kingston, RI 02281\\
\small\tt barrus@uri.edu\
\and
Ann N. Trenk\thanks{
This work was supported by a grant from the Simons Foundation (\#426725, Ann Trenk). } \\
\small Department of Mathematics\\
\small Wellesley College\\
\small Wellesley, MA 02481\\
\small\tt atrenk@wellesley.edu\
\and
Rebecca Whitman\\
\small Department of Mathematics\\
\small University of California, Berkeley\\
\small Berkeley, CA 94720\\
\small\tt  rebecca\_whitman@berkeley.edu\
}
\begin{document}
\maketitle

\begin{abstract}    
A graph with degree sequence $\pi$ is a \emph{unigraph} if it is isomorphic to every graph that has degree sequence $\pi$. The class of unigraphs is not hereditary and in this paper we study the related hereditary class HCU, the hereditary closure of unigraphs, consisting of all graphs induced in a unigraph. We characterize the class HCU in multiple ways making use of the tools of a decomposition due to Tyshkevich and a partial order on degree sequences due to Rao. We also provide a new characterization of the class that consists of unigraphs for which all induced subgraphs are also unigraphs.
  \end{abstract}
  
  AMS subject classifications: 05C07, 05C75
  \smallskip
  
  Keywords: unigraphs, degree sequences, hereditary graph classes

\section{Introduction}

In this paper we consider two classes (which we will call $HU$ and $HCU$) of finite, simple graphs that are characterized in terms of possible realizations of their degree sequences. Recall that the \emph{degree sequence} of a graph is the list of degrees of the graph's vertices, written in non-increasing order. A graph $G$ with a degree sequence $\pi$ is said to \emph{realize} $\pi$ or to be a \emph{realization} of $\pi$. When a realization exists for a finite list $\rho$ of non-negative integers, then we call $\rho$ \emph{graphic}.

A \emph{unigraph} is a graph that is the unique graph up to isomorphism having its specific degree sequence. For example, each graph on four or fewer vertices is a unigraph. On the other hand, neither $P_5$ nor $K_2+K_3$ is a unigraph, since these non-isomorphic graphs share the degree sequence $(2,2,2,1,1)$. (Here we use $+$ to denote a disjoint union, and $P_n$ or $K_n$ to denote a path or complete graph, respectively, on $n$ vertices.) Degree sequences will be called \emph{unigraphic} or \emph{non-unigraphic} depending on whether they have only one realization or multiple, respectively.

Unigraphs are rare (see~\cite{Ty80,Ty00} for counts due to Tyshkevich) and special. The ability to precisely describe each unigraph by listing its vertex degrees simplifies some combinatorial problems, and unigraphs can be used to examine the effect of degree sequence-based bounds on graph parameters (see, for example, \cite{Ba12b}, where unigraphs highlight the tightness of the degree sequence residue as a bound on the independence numbers of graph realizations). Early work on unigraphs and their degree sequences was done by many authors (see \cite{Jo75,Jo80,KlLi75,Ko76a,Ko76b,Li75}, for several important works), culminating in a complete characterization by Tyshkevich and Chernyak (see \cite{TyCh78,TyCh79a, TyCh79b, TyCh79c} and \cite{Ty00}).

The class of unigraphs contains several families of graphs that are remarkable in their own right. These include the families of threshold graphs~\cite{HaIbSi81}, matroidal graphs, and matrogenic graphs~\cite{MaEtAl84,Ty84}. See the references for these families' definitions. With one exception, each of these families of unigraphs, including the family of unigraphs itself, has three distinct types of characterizations. To begin with, each has a characterization in terms of degree sequences (see~\cite{HaIbSi81,KlLi75,Ko76a,Ko76b,Li75,MaEtAl84,Ty84}). This may not be surprising, given that all graphs involved are unigraphs. 

As a second type of characterization, each of the classes of threshold graphs, matroidal graphs, matrogenic graphs, and general unigraphs has a characterization in terms of graph structure. The members of each family can be recognized as having subgraphs of certain prescribed types, connected by rigidly prescribed patterns of adjacency; see~\cite{ChHa75,FoHa78,MaPe95,Pe77,Ty00}.

Finally, the threshold graphs, matroidal graphs, and matrogenic graphs each can be characterized in terms of a list of forbidden induced subgraphs; see~\cite{ChHa75,FoHa78,Pe77}. This is possible because each of these families of graphs is a \emph{hereditary} family, i.e., one that is closed under taking induced subgraphs of its members.

The full class of unigraphs, however, is not hereditary. As one example, observe that though $(3,3,3,3,3,1)$ has a unique realization $G$, the graph $G$ contains two non-unigraphs as induced subgraphs (namely, the two realizations of $(3,2,2,2,1)$). 

Given this difference between the family of unigraphs and those of its remarkable subfamilies, in~\cite{Ba12} and~\cite{Ba13} Barrus defined the family of \emph{hereditary unigraphs} to be the largest subfamily of the unigraphs that is closed under taking induced subgraphs. In this paper we denote this family by $HU$. By definition, $HU$ does not contain the unigraph with degree sequence $(3,3,3,3,3,1)$ mentioned above, but it does contain each of the threshold, matroidal, and matrogenic graphs. Barrus showed that like these families, $HU$ has characterizations in terms of the degree sequences and structure of its elements, and $HU$ has a characterization in terms of a finite number of forbidden induced subgraphs. 

We now ask about a complementary question. The family $HU$ is the largest hereditary family contained by the (non-hereditary) family of unigraphs; what does the \emph{smallest} hereditary family \emph{containing} the family of unigraphs look like?

We define the \emph{hereditary closure of the unigraphs}, which we denote henceforth by $HCU$, as the collection of all graphs that occur as an induced subgraph of some unigraph. Under this definition the two realizations of $(3,2,2,2,1)$ belong to $HCU$, as do all unigraphs. In this paper we show that $HCU$, like its subfamilies described above, has characterizations in terms of degree sequences, structural conditions, and a finite list of forbidden induced subgraphs.

Along the way, we discuss a new type of characterization for graphs in $HU$ and $HCU$. This characterization relies on a partial order on degree sequences introduced by S.B. Rao in~\cite{Rao80}. The result is a hybrid incorporating elements of degree sequences and forbidden induced subgraphs.

The paper is structured as follows. In Section 2 we discuss tools that aid in our characterizations; these include ideas about split graphs and the scheme of graph composition introduced by Tyshkevich in~\cite{Ty00}. In Section 3 we provide a structural characterization for graphs in $HCU$, extending the collection of ``building blocks'' introduced in~\cite{Ty00} to allow for non-unigraph members in $HCU$. In Section 4 we provide the first of our characterizations of $HCU$ that involve the degree sequences of its elements. In preparation for another degree sequence characterization, in Section 5 we introduce Rao's degree sequence poset and develop some preliminary results; we also give a new characterization of $HU$ involving the Rao poset. In Section 6 we establish a degree sequence characterization of $HCU$ in terms of the Rao poset. Finally, in Section 7 we describe a computational search resulting in the list of minimal forbidden induced subgraphs for the family $HCU$.

\section{Preliminaries and Tyshkevich decomposition}

In this section we collect preliminary notions and describe the Tyshkevich decomposition of graphs, which will serve as a structural framework in all of our characterizations of graphs in $HCU$. In our approaches to characterizing $HCU$ we will often deal separately with its split and non-split elements. Split graphs were first defined by Gyarfas and Lehel in \cite{GyLe69}, and independently by F\"{o}ldes and Hammer in \cite{FoHa77}. We recall some known preliminaries here; for a more comprehensive treatment, see \cite{CoTr21}.

\begin{definition}
\label{def:split}
    {\rm A} split graph {\rm is a graph in which the vertex set can be partitioned into a clique and a stable set (also known as an independent set). For a split graph $G$, the partition $V(G) = K \cup S$ is called a} $KS$-partition {\rm if $K$ is a clique and $S$ is a stable set.}
\end{definition}

The class of split graphs is hereditary, since any induced subgraph inherits a $KS$-partition from the original graph. Membership in the class of split graphs can be characterized by degree sequence \cite{HaSi81}. Accordingly, we call a degree sequence \emph{split} if its realizations are split graphs, and \emph{non-split} otherwise. 

In a split graph $G$, we call a vertex $v$ a \emph{swing vertex} if there exist two  $KS$-partitions of $G$ such that $v$ is in the clique of one and the stable set of the other (with the idea that $v$ can ``swing" back and forth between the clique and the stable set). Equivalently, $v$ is a swing vertex if its degree is equal to the clique number of $G$. A split graph has a unique $KS$-partition if and only if it has no swing vertices. 

Given a split graph $G$ with a fixed $KS$-partition, the \emph{inverse} of $G$, denoted $G^I$, is the graph obtained by adding to the graph all possible edges with both endpoints in $S$ and removing from the graph all edges with both endpoints in $K$. In this way $G^I$ is also a split graph, with $S$ and $K$ partitioning its vertex set into a clique and a stable set, respectively. Note that our definition depends on a specified  $KS$-partition for $G$; if $G$ has more than one such partition, more than one inverse will exist. Context should make the partition clear, though we will often deal with split graphs with unique $KS$-partitions, and in this instance the inverse is unique.

We shall denote the complement of a graph $G$ by $\overline{G}$. Note that if $G$ is a split graph with $KS$-partition $K \cup S$ then the complement $\overline{G}$ is also a split graph, with $S$ and $K$ serving as the clique and stable set, respectively, in a $KS$-partition of $\overline{G}$. If $G$ is a split graph with fixed $KS$-partition, then the complement of the inverse of $G$ and the inverse of the complement of $G$ are equal, and we denote this graph by $\overline{\GI}$. Tracing the effects of complementation and taking inverses of distinct realizations of a degree sequence shows the following result. 

\begin{remark}
\label{prop:unigraph_complement_inverse}
    \rm If $U$ is a unigraph, then $\overline{U}$ is also a unigraph. If $U$ is also a split graph and $K \cup S$ is a $KS$-partition of $U$, then both $U^\inv$ and $\overline{U^\inv}$ are unigraphs. 
\end{remark}

Tyshkevich \cite{Ty00} introduced a way to express any graph as a composition of split graphs and at most one non-split graph. We first define the composition of a split graph with another graph. The composition of a split graph $G$ with fixed $KS$-partition and another graph $H$ is achieved by taking the disjoint union of $G$ and $H$ and joining each vertex of $H$ to each vertex of $K$. We make this precise in the next definition. 

\begin{definition}
\label{def:composition}
    {\rm Let $G$ be a split graph with $KS$-partition $\KSG$, and let $H$ be any graph. The} (Tyshkevich) composition of $G$ and $H$ {\rm is the graph on $V(G) \cup V(H)$ with edge set $E(G) \cup E(H) \cup \{uv|u \in K, v\in V(H)\}$. We denote this by $(G, K, S) \circ H$, and write $G \circ H$ when the $KS$-partition of $G$ is clear.}
\end{definition}

\begin{figure}
\centering
  \includegraphics[height=3.5cm]{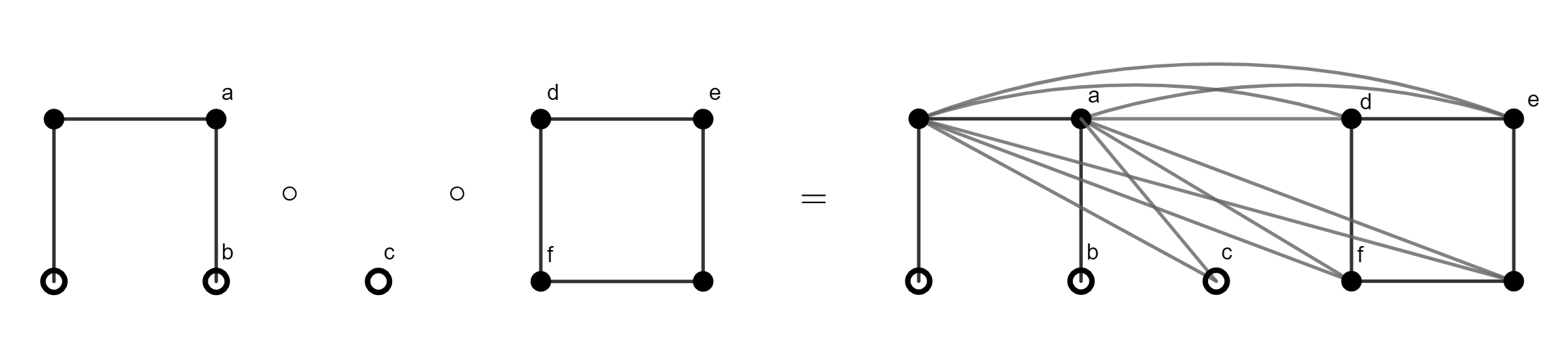}
  \caption{The composition $P_4 \circ \overline{K_1} \circ C_4$. Vertices of $P_4$ and $\overline{K_1}$ drawn on the top row are in the clique portion of the partitions and those on the bottom row are in the stable set. }
  \label{fig:composition}
\end{figure}

It is not hard to show that the composition $G \circ H$ is a split graph if and only if $H$ is a split graph. Using this, it is straightforward to extend the composition to $n$ split graphs and one graph chosen freely, and to observe that this operation is associative. Figure \ref{fig:composition} shows the composition of $P_4$, which has a unique $KS$-partition; the single vertex labeled $c$, which is placed in the $S$ part of its $KS$-partition; and the non-split graph $C_4$. (We use $C_n$ to denote a cycle graph on $n$ vertices, and other standard graph naming notation throughout). Throughout the paper, we draw clique vertices of split graphs with a solid circle, and stable set vertices with an open circle. The abstract structure of Tyshkevich composition is shown later in Figure \ref{fig:composition_degree}. 

Graphs that cannot be expressed as a non-trivial graph composition play a fundamental role in our characterization theorems. 

\begin{definition}
\label{def:indecomposable}
    {\rm A graph $G$ is} decomposable {\rm if there exist nonempty graphs $G_1, G_0$, where $G_1$ is a split graph and $G=G_1 \circ G_0$, and} indecomposable {\rm otherwise.}
\end{definition}

Notice that a graph $G$ on at least two vertices with an isolated vertex $v$ is decomposable, with vertex $v$ as a leftmost component that has $K=\emptyset$ and $S=\{v\}$. With the opposite $KS$-partition, graphs with a dominating vertex are similarly decomposable, as are split graphs with a swing vertex, where the rightmost component is a singleton vertex. A consequence of this last observation is that indecomposable split graphs have no swing vertices and thus have exactly one $KS$-partition.

\begin{remark}
\label{prop:indecomposable_DIS}
    \rm Indecomposable graphs with at least two vertices contain no dominating, isolated, or swing vertices. 
\end{remark}

We state the existence and uniqueness of Tyshkevich decomposition; a proof is found in \cite{Ty00}. 

\begin{theorem} \cite{Ty00}
\label{prop:Introduction:decomposition}
    Every graph $G$ can be written as a composition of indecomposable components, where each of $G_n, \ldots, G_1$ has a fixed $KS$-partition: 
    $$G=G_n \circ \ldots \circ G_1 \circ G_0.$$
    Furthermore, when each $G_i$ is nonempty, this decomposition is unique up to isomorphism, where neither the order of the components nor the choice of $KS$-partitions can vary. 
\end{theorem}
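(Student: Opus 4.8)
The plan is to establish existence and uniqueness separately, each by induction on $|V(G)|$.

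\emph{For existence:} if $G$ is indecomposable we take $n = 0$ and $G_0 = G$. Otherwise, among all ways of writing $G = A \circ B$ with $A$ a nonempty split graph (equipped with some $KS$-partition) and $B$ nonempty, I would fix one in which $|V(A)|$ is as small as possible. Such an $A$ must itself be indecomposable: if $A = A' \circ A''$ with $A'$ a nonempty split graph and $A''$ nonempty, then $A''$ is split (a composition $X \circ Y$ of a split graph with a graph is split precisely when $Y$ is split), so $A'' \circ B$ is a legitimate composition, and by associativity $G = A' \circ (A'' \circ B)$, contradicting the minimality of $|V(A)|$. Applying the induction hypothesis to $B$ to write it as $G_{n-1} \circ \cdots \circ G_0$ and setting $G_n = A$ then yields the desired decomposition. (The same argument shows that any coarser factorization of $G$ into a split graph composed with a graph refines to the indecomposable decomposition.)

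\emph{For uniqueness:} suppose $G = G_n \circ \cdots \circ G_0 = H_m \circ \cdots \circ H_0$ are two decompositions into nonempty indecomposable components with their fixed $KS$-partitions. The goal is to show that the leftmost components agree---that $G_n$ and $H_m$ are isomorphic by an isomorphism carrying the $KS$-partition of one to that of the other---after which $G_{n-1} \circ \cdots \circ G_0$ and $H_{m-1} \circ \cdots \circ H_0$ are indecomposable decompositions of the same graph (up to isomorphism) on fewer vertices, and the induction hypothesis gives $n = m$ together with $G_i \cong H_i$ (compatibly with the partitions) for all $i$. The cases $n = 0$ and $m = 0$ are immediate, since an indecomposable graph has no nontrivial composition. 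To compare $G_n$ with $H_m$ I would split into cases according to whether $G$ has a dominating vertex, an isolated vertex, or neither. If $G$ has a dominating vertex $v$, then, since an indecomposable graph on two or more vertices has no dominating vertex, $v$ can occupy only a single-vertex block in the clique role; tracing this constraint through the composition shows that the leftmost component of any decomposition of $G$ is a single vertex in the clique position. Hence $G_n \cong H_m \cong K_1$ with matching partition, and since deleting a dominating vertex produces the same graph up to isomorphism regardless of which one is deleted (any two dominating vertices are true twins, so their transposition is an automorphism of $G$), the induction proceeds on the remaining graph. The isolated-vertex case is symmetric, using that complementation commutes with Tyshkevich composition, with the clique and stable roles interchanged.

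The remaining case---$G$ has neither a dominating nor an isolated vertex---is where I expect the real work to be. Here both $G_n$ and $H_m$ have at least two vertices, and each has a nonempty clique part and a nonempty stable part. I would study the vicinal (domination) preorder $\succeq$ on $V(G)$, where $u \succeq v$ means that every neighbour of $v$ other than $u$ is also a neighbour of $u$. One checks that every maximal element of $\succeq$ lies in $K_n$ and every minimal element lies in $S_n$, and then one must recover the whole vertex set $V(G_n)$, and its partition, from $\succeq$ together with the adjacency structure of $G$; once $V(G_n)$ is pinned down, its $KS$-partition is forced, because $G_n$ is an indecomposable split graph on at least two vertices and hence has no swing vertex. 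The main obstacle is exactly this recovery. The sets $K_n$ and $S_n$ are not simply the sets of dominating vertices, nor of $\succeq$-maximal (respectively $\succeq$-minimal) vertices---even small indecomposable split graphs contain clique vertices that are strictly dominated by other clique vertices---so $V(G_n)$ has to be built up carefully, alternating between its clique and stable parts, in a way that provably never absorbs a vertex of the core $G_0$ or of an intermediate block. Performing this identification so that the outcome depends only on $G$ and not on the chosen decomposition is the technical heart of the argument, and it is carried out in~\cite{Ty00}.
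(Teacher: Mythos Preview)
The paper does not supply its own proof of this theorem: it simply states the result and cites~\cite{Ty00} for the argument. So there is no proof in the paper to compare your proposal against.

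That said, your sketch is sound as far as it goes. The existence argument by induction, taking a leftmost factor of minimal size, is correct and standard. Your treatment of the dominating-vertex and isolated-vertex cases in the uniqueness argument is also correct: in each such case the leftmost indecomposable component of any decomposition is forced to be a single vertex (in the clique or stable role, respectively), and any two dominating (or isolated) vertices are twins, so stripping one off is well defined up to automorphism and the induction proceeds.

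You are right that the substantive content lies in the remaining case, where $G$ has no dominating or isolated vertex and one must show that the vertex set of the leftmost block, together with its $KS$-partition, is determined by $G$ alone. Your outline via the vicinal preorder is the right shape of idea, but as you acknowledge, you have not actually carried out the identification of $V(G_n)$; that is precisely the work done in~\cite{Ty00}, and your proposal, like the paper itself, ultimately defers to that source. So your write-up is a reasonable expansion of the citation, not an independent proof.
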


When we refer to the \textit{decomposition} $G=G_n \circ \ldots \circ G_0$, we mean the Tyshkevich decomposition of $G$ into its \textit{indecomposable} parts as guaranteed by Theorem \ref{prop:Introduction:decomposition}, whereas a (Tyshkevich) composition $H=H_n \circ \ldots \circ H_0$ is defined even when not all $H_i$ are indecomposable. 

We note that both complementation and inversion commute with composition, as well as with one another. Whereas the complement of a composition is taken component-wise, the inverse of a composition flips the order of the components. Therefore, the property of being indecomposable is shared by a graph and its complement, and if the graph is a split graph, it is also shared by its inverse and inverse-complement. We record these as remarks. 

\begin{remark}
\label{prop:commute_complement_inverse}
    \rm If $G= G_n \circ \ldots \circ G_0$, then $\overline{G} = \overline{G_n} \circ \ldots \circ \overline{G_0}$, with cliques and stable sets reversed. If $G_0$ is a split graph with fixed $KS$-partition, then the inverse $G^I$ with respect to the $KS$-partition implied by $G_n \circ \ldots \circ G_0$ is given by $G^I = G_0^I \circ \ldots \circ G_n^I$, reversing the order of the components. 
\end{remark}

\begin{remark}
\label{prop:indecomposable_complement_inverse}
    \rm The complement of an indecomposable graph is indecomposable, and the inverse of an indecomposable split graph is indecomposable. 
\end{remark}

Although the class of unigraphs has no forbidden induced subgraph characterization, it can be characterized via decomposition.

\begin{theorem} \cite{Ty00}
\label{prop:Introduction:Tyshkevich_unigraph}
    A graph $G$ is a unigraph if and only if each component of its decomposition is a unigraph. 
\end{theorem}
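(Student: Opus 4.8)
The plan is to prove the equivalence by tracking how vertex degrees behave under Tyshkevich composition and combining this with the uniqueness of the decomposition (Theorem~\ref{prop:Introduction:decomposition}). For the forward implication I would argue the contrapositive: if some component $G_j$ is not a unigraph, I produce a second realization of $\pi(G)$. Choose $G_j'$ with the same degree sequence as $G_j$ but $G_j' \not\cong G_j$. When $j \ge 1$ the graph $G_j'$ is again split, and since $G_j$ is indecomposable it has no swing vertex, so its clique has size equal to the clique number---a quantity read off from the degree sequence; hence $G_j'$ has a $KS$-partition with a clique of that same size, obtained by taking the highest-degree vertices, and with the same multiset of clique-degrees as $G_j$. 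Substituting $G_j'$ for $G_j$ in the composition yields a graph $G'$ with $\pi(G') = \pi(G)$. Refining any decomposition of $G_j'$ into indecomposable parts and applying the uniqueness in Theorem~\ref{prop:Introduction:decomposition}, an isomorphism $G' \cong G$ would force the component counts to match and hence $G_j' \cong G_j$, a contradiction; so $G$ is not a unigraph. The case $j = 0$ is identical but requires no partition bookkeeping.

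For the reverse implication, assume every component of $G = G_n \circ \cdots \circ G_0$ is a unigraph and let $G^*$ realize $\pi(G)$. The heart of the argument is a \emph{peeling lemma}: if $F$ is an indecomposable split graph on at least two vertices with $KS$-partition $K \cup S$ and $H$ is any graph, then every realization of $\pi(F \circ H)$ can be written as $F^* \circ H^*$ with $F^*$ split, $\pi(F^*) = \pi(F)$, the clique of $F^*$ of size $|K|$, and $\pi(H^*) = \pi(H)$. Granting this, I apply it to $G^*$ (which realizes $\pi(G_n \circ (G_{n-1} \circ \cdots \circ G_0))$ by associativity), peel off a copy $G_n^*$ of $G_n$, and recurse on the remainder; after $n$ steps $G^*$ is exhibited as $G_n^* \circ \cdots \circ G_0^*$ with $\pi(G_i^*) = \pi(G_i)$ and matching clique sizes for the split pieces. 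Since each $G_i$ is a unigraph and, being indecomposable, has a unique $KS$-partition (which any isomorphism must respect), $G_i^* \cong G_i$ compatibly with the partitions, so $G^* \cong G$. Single-vertex split components---corresponding to isolated or dominating vertices---are peeled off separately, using the elementary observation that the neighborhood of such a vertex is forced in every realization.

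To prove the peeling lemma I would invoke the Erd\H{o}s--Gallai inequalities and their equality case. Because $F$ is indecomposable it has no swing vertex, from which one checks that every $S$-vertex has degree at most $|K|-1$ and every $K$-vertex has degree at least $|K|$; consequently the degrees in $F \circ H$ fall into three strictly separated blocks (the $S$-vertices, then the $V(H)$-vertices, then the $K$-vertices), and a direct computation shows $\pi(F \circ H)$ attains equality in the Erd\H{o}s--Gallai inequality at index $k = |K|$. Equality there forces, in \emph{every} realization, that the $|K|$ highest-degree vertices form a clique $K^*$, that every vertex of degree at least $|K|$ is joined to all of $K^*$, and that every vertex of degree less than $|K|$ has all its neighbors inside $K^*$. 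These are exactly the conditions expressing $G^*$ as a composition with left split part on $K^* \cup S^*$, and comparing degrees block by block (subtracting $|V(H)|$ from the $K^*$-degrees and $|K|$ from the $V(H)$-block degrees) identifies the two pieces as realizations of $\pi(F)$ and $\pi(H)$.

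I expect the peeling lemma to be the main obstacle---specifically, verifying the relevant Erd\H{o}s--Gallai equality and that it constrains the adjacencies in all realizations, together with the bookkeeping for single-vertex components. Once that is in place, the three-block degree separation is a short calculation, the recursion in the reverse direction is routine, and the forward direction follows directly from uniqueness of the decomposition.
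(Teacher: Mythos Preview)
The paper does not supply its own proof of this statement; Theorem~\ref{prop:Introduction:Tyshkevich_unigraph} is quoted from Tyshkevich~\cite{Ty00} and used as a black box. So there is no in-paper argument to compare yours against.

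That said, your outline is sound and in fact tracks the machinery that the paper later imports from the same source. Your ``peeling lemma'' is essentially Theorem~\ref{prop:degree_sequence_decomposition} (also cited from~\cite{Ty00}), and your plan to prove it via an Erd\H{o}s--Gallai equality at the index $k=|K|$ lying at a strict descent is precisely the criterion recorded in Lemma~\ref{prop:EG_nonsplit_useful}. The strict separation of the three degree blocks that you need follows, as you note, from the absence of swing vertices in an indecomposable split component (Remark~\ref{prop:indecomposable_DIS}); this also guarantees $d_k > d_{k+1}$, so the equality pins down a \emph{unique} set of top-$k$ vertices in every realization. For the forward direction, your use of the uniqueness in Theorem~\ref{prop:Introduction:decomposition} is correct: if $G_j'$ were decomposable the refined decomposition of $G'$ would have too many parts, and if it is indecomposable then component-by-component matching forces $G_j' \cong G_j$. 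The one place to be careful is ensuring that the substitute $G_j'$ carries a $KS$-partition with the same clique size and the same clique-degree multiset as $G_j$; this follows because the presence or absence of a swing vertex, and hence the uniqueness of the $KS$-partition, is itself determined by the degree sequence.
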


To understand the set of unigraphs, using Theorem \ref{prop:Introduction:Tyshkevich_unigraph}, it suffices to determine which indecomposable graphs are unigraphs. Let $\U$ be the set of indecomposable unigraphs. Tyshkevich characterized $\U$ as a set of two graphs and seven families; we state this characterization in Theorem \ref{prop:indecomposable_unigraphs}, using streamlined notation for each graph family, then give a brief description of each family. The individual graphs and graph families are shown in Figures \ref{fig:NonSplit} and \ref{fig:Split}. We also collect the names of several graph classes in Table~\ref{table:classes}; the definitions of some of these classes are forthcoming in this and the next section.

For $m \ge 2$, the graph $T_1(m)$ is the disjoint union of $m$ copies of $K_2$. For $m \ge 1$ and $n \ge 2$, the graph $T_2(m,n)$ is the disjoint union of $T_1(m)$ and the $n$-star $K_{n,1}$, for $m \ge 1$ and $n \ge 2$. The graph $T_3(m)$ is formed by attaching one $C_4$ and $m \ge 1$ copies of $C_3$ at a shared vertex. 

\begin{figure}[p]
\centering
  \includegraphics[height=5cm]{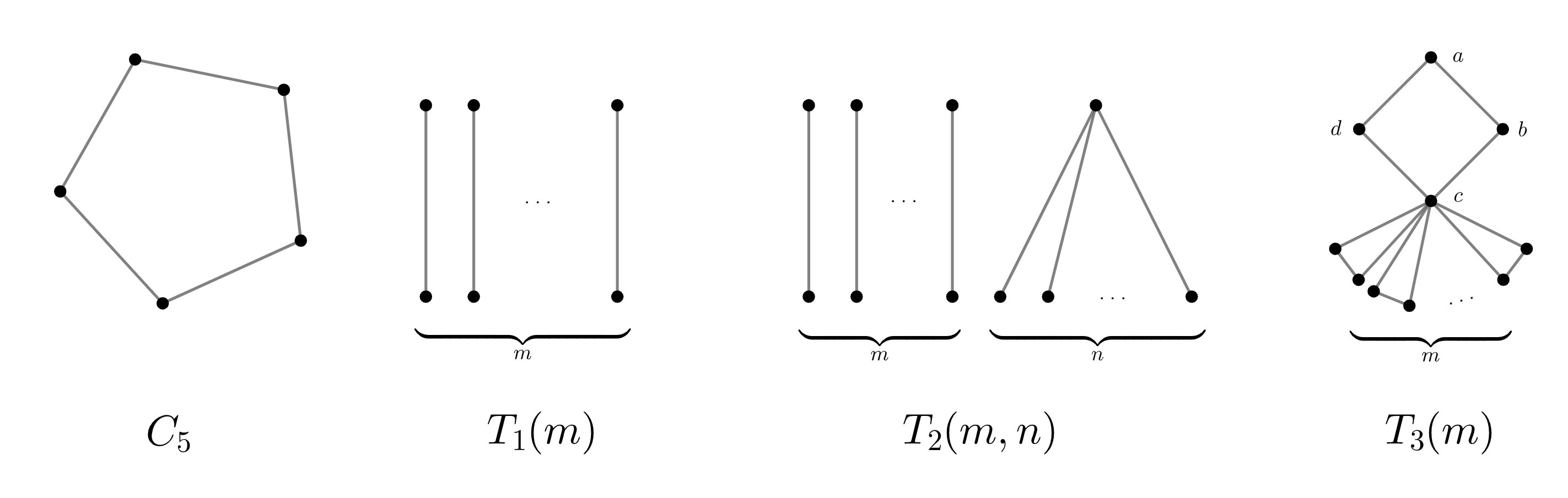}
  \caption{The non-split graphs in $\U$.}
  \label{fig:NonSplit}
\end{figure}

The graph $T_4(p,q)$ consists of $q \ge 2$ clique vertices, each of which is adjacent to exactly $p \ge 1$ leaves. For $p_i \ge 1, q_i \ge 1$, and $r \ge 2$, the graph $T_5(p_1, q_1, \ldots, p_r, q_r)$ has $q_1 + q_2 + \ldots + q_r$ clique vertices, and each of $q_i$ clique vertices are adjacent to exactly $p_i$ leaves. For $p \ge 1, q_1 \ge 1,$ and $q_2 \ge 1$, the graph $T_6(p, q_1, q_2)$ is a split graph with a clique containing $q_1+q_2$ vertices. Each clique vertex has $p+1$ neighbors in the stable set; $q_2$ of them are adjacent to $p+1$ leaves, and $q_1$ of them are adjacent to $p$ leaves and a shared stable set vertex, $e$. For $p \ge 1$ and $q \ge 1$, the graph $T_7(p,q)$ is formed from $T_6(p, 2, q)$ by adding a vertex $f$ to the clique, adjacent to all stable set vertices except $e$. 

\begin{figure}[p]
\centering
  \includegraphics[height=9cm]{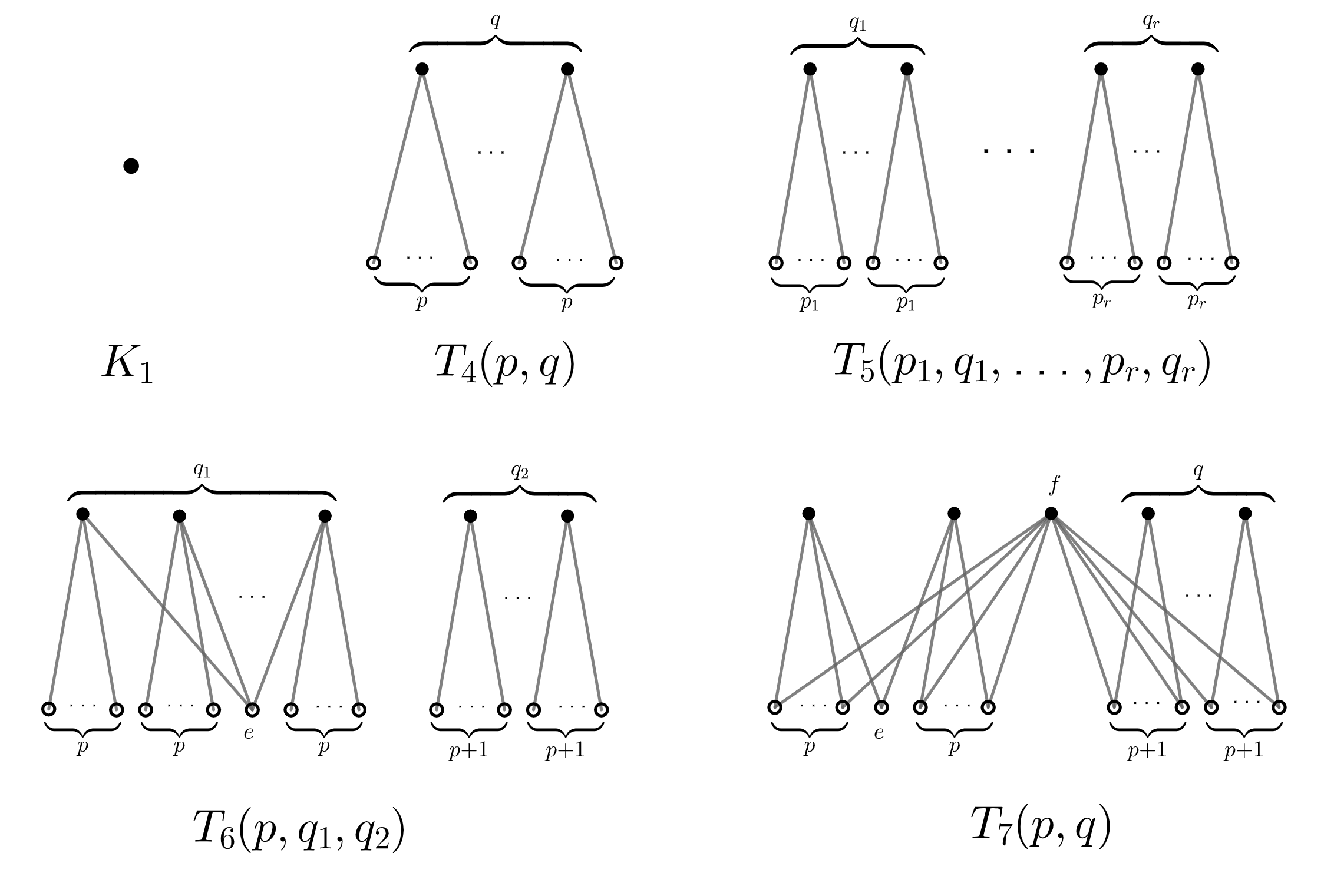}
  \caption{The split graphs in $\U$. The solid vertices in each graph's upper row form the graph's clique, although clique edges are not shown. The lower row of hollow vertices in each graph are the stable set vertices.}
  \label{fig:Split}
\end{figure}

For $1 \le i \le 7$, the family ${{\mathcal{T}}_i}$ contains exactly the graphs $T_i$ with acceptable choices of parameters. 

\begin{theorem} \cite{Ty00}
\label{prop:indecomposable_unigraphs}
The indecomposable non-split unigraphs are $C_5$ and the families ${\mathcal{T}_1}$, ${\mathcal{T}_2}$, and ${\mathcal{T}_3}$, as well as the complements of each graph in these families. The indecomposable split unigraphs are $K_1$ and the families ${\mathcal{T}_4}$, ${\mathcal{T}_5}$, ${\mathcal{T}_6}$, and ${\mathcal{T}_7}$, as well as the complements, inverses, and inverse-complements of each graph in these families. 
\end{theorem}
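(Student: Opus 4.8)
The statement to be established is Tyshkevich's classification of the indecomposable unigraphs. The plan is to prove two directions: \emph{membership}, that every graph named is an indecomposable unigraph, and \emph{completeness}, that there are no others. Both directions can be shrunk using the closure properties already recorded: by Remark~\ref{prop:unigraph_complement_inverse} the unigraph property survives complementation and, in the split case, inversion and inverse-complementation, while Remark~\ref{prop:indecomposable_complement_inverse} says the same for indecomposability. So it suffices to verify membership for one representative of each orbit under these operations --- namely $C_5$, $\mathcal{T}_1$, $\mathcal{T}_2$, $\mathcal{T}_3$ among the non-split graphs and $K_1$, $\mathcal{T}_4$, $\mathcal{T}_5$, $\mathcal{T}_6$, $\mathcal{T}_7$ among the split graphs --- and in the completeness direction one may normalize an arbitrary indecomposable unigraph within its orbit before classifying it.

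The tool underlying both directions is the classical fact that any two realizations of a graphic sequence are connected by a chain of $2$-switches (replace edges $ab,cd$ by non-edges $ac,bd$); hence a graph $G$ is a unigraph if and only if every $2$-switch available in $G$ yields a graph isomorphic to $G$. For \emph{membership} I would, for each of the finitely many families, classify the available $2$-switches by the roles of their four vertices (clique vertex, private leaf, the distinguished vertices $e$ and $f$, a vertex on the $C_4$ or $C_5$, and so on) and check in each case that the switch is implemented by an automorphism permuting the degree classes; this is a finite, essentially mechanical verification per family. Indecomposability of each $T_i$ is faster still: by Remark~\ref{prop:indecomposable_DIS} one only checks that $T_i$ has no isolated vertex, no dominating vertex, and (when split) no clique vertex of degree equal to the clique number, all visible from the explicit descriptions. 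Closure then promotes these to the full list.

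For \emph{completeness} --- the substantial direction --- I would induct on $|V(G)|$ with $G$ an indecomposable unigraph on at least two vertices. By Remark~\ref{prop:indecomposable_DIS}, $G$ has no dominating, isolated, or swing vertex; in particular a split $G$ has a unique $KS$-partition $K\cup S$, and I would work with the bipartite graph of edges between $K$ and $S$. Again the lever is that in a unigraph every available $2$-switch is an automorphism, which forces near-regular adjacency patterns. Concretely, I would pick a vertex $v$ of maximum degree and a vertex $w$ of minimum degree, examine how $N(v)$ and $N(w)$ meet the rest of $G$, and argue that either a small local configuration already determines $G$ (producing $C_5$ or a minimal member of one of the families) or a suitable vertex or ``leaf bundle'' can be deleted to leave a smaller indecomposable unigraph, which by induction lies on the list and whose structure extends uniquely to $G$. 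I expect the non-split case to be the lighter one: the interplay of $2$-switches within and across components sharply limits the possible components, forcing $mK_2$, $mK_2+K_{1,n}$, the $C_4$-with-triangles graph $T_3(m)$, $C_5$, or a complement of one of these. The split case, where $\mathcal{T}_4,\ldots,\mathcal{T}_7$ and their inverses arise, needs the heaviest bookkeeping: I would stratify $K$ and $S$ by degree and use uniqueness of the $KS$-partition to rule out every configuration of edges between $K$ and $S$ except bundles of private leaves attached to clique vertices, with at most the one or two exceptional vertices $e,f$.

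The main obstacle I anticipate is the exhaustiveness of that split-case analysis: the bipartite graph between a clique and a stable set can \emph{a priori} be quite intricate, and one must show the no-nontrivial-$2$-switch constraint admits only the handful of patterns given by $\mathcal{T}_4$--$\mathcal{T}_7$ and their inverses, with no case overlooked. A secondary, lower-stakes nuisance is getting the boundary parameter values right: the bounds $m\ge 2$, $n\ge 2$, $r\ge 2$, $p,q\ge 1$ in the family definitions exist precisely to avoid degeneracies and double-counting (for instance $1\cdot K_2$ is decomposable), so the base cases of the induction and the ``minimal member'' configurations must be matched against these bounds carefully.
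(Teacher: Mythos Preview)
The paper does not prove this theorem. It is stated with the citation \cite{Ty00} and no proof follows; the result is imported wholesale from Tyshkevich's work, where the full argument spans a series of papers (see the references \cite{TyCh78,TyCh79a,TyCh79b,TyCh79c,Ty00} in the bibliography). There is therefore nothing in the present paper to compare your proposal against.

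As for the proposal itself, the overall architecture is sound: reduce via the closure remarks to one representative per orbit, verify membership by a $2$-switch analysis, and attack completeness by induction. Two cautions. First, your phrasing ``every available $2$-switch is an automorphism'' overstates what you get: a $2$-switch in a unigraph yields an \emph{isomorphic} graph, but the isomorphism need not fix the remaining vertices, so you cannot directly read off an automorphism acting on degree classes. What you actually use is the weaker (but still strong) constraint that the multiset of degrees in every local neighborhood pattern is preserved. Second, your inductive step ``delete a vertex or leaf bundle to obtain a smaller indecomposable unigraph'' is where Tyshkevich's argument does real work, and it is not automatic: deleting a vertex from an indecomposable unigraph can produce a decomposable graph or a non-unigraph (indeed, the paper's own Theorem~\ref{prop:indecomposable_nonunigraphs} exhibits indecomposable non-unigraphs sitting inside members of $\mathcal{T}_3$, $\mathcal{T}_6$, $\mathcal{T}_7$). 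So the induction must be organized more carefully than a straight vertex deletion --- Tyshkevich's route goes through a detailed degree-sequence analysis rather than a pure structural induction. Your outline is a plausible starting point but underestimates the depth of the split-case bookkeeping.
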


\begin{table}
    \begin{tabular}{| l | l |}
        \hline
         $HU$ & Unigraphs for which all induced subgraphs are unigraphs \\
         $HCU$ & Smallest hereditary class containing all unigraphs; equivalently, graphs induced in a unigraph\\
         $\U$ & Indecomposable unigraphs \\ 
         $\I$ & Indecomposable graphs induced in a unigraph \\  
         $\NN$ & Non-unigraphs in $\I$  \\  
        \hline
    \end{tabular}
    \caption{Several subclasses of $HCU$. }
    \label{table:classes}
\end{table}

\section{A structural characterization of $HCU$}

In this section, we show that to characterize the class $HCU$, it suffices to characterize the indecomposable graphs induced in a unigraph. We begin with a lemma describing the interaction of induced subgraphs and compositions. 

If $G$ is written as a composition $G = G_n \circ \ldots \circ G_0$ and $H$ is an induced subgraph of $G$, then removing the vertices of $G$ not in $H$ and maintaining the same $KS$-partitions in each split graph component provides a representation of $H$ as a composition. This is recorded in Lemma \ref{prop:Characterization:induced_composition} below and illustrated in Figure \ref{fig:composition}, where the graph induced in $G = P_4 \circ \overline{K_1} \circ C_4$ by the labeled vertices $a$-$f$ is equal to the composition $K_2 \circ \overline{K_1} \circ P_3$, with $KS$-partitions inherited from each component on the left-hand side. 

\begin{lemma}
\label{prop:Characterization:induced_composition}
    Let $G_0$ be a graph, let $G_1, \ldots, G_n$ be split graphs and let $K_i \cup S_i$ be a $KS$-partition of $G_i$ for $1 \le i \le n$. Further, let $H$ be an induced subgraph of $G$. Define $H_i$ to be the graph induced in $H$ by the vertices of $G_i$, and if $i \ge 1$, assign $H_i$ the $KS$-partition inherited from $G_i$. Then $H = H_n \circ \ldots \circ H_0$. 
\end{lemma}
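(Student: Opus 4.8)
The plan is to verify the claimed decomposition directly from the definitions, checking that the vertex set and the edge set of $H$ coincide with those of the composition $H_n \circ \ldots \circ H_0$. Since the composition of split graphs with one arbitrary graph is associative (as noted after Definition~\ref{def:composition}), it suffices to understand, for each pair of indices $i > j$, which edges the composition places between the vertices of $H_i$ and those of $H_j$, and to compare this with the edges present in $H$.

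First I would record the vertex set: since $H$ is an induced subgraph of $G = G_n \circ \ldots \circ G_0$ and the $V(G_i)$ partition $V(G)$, the sets $V(H_i) = V(H) \cap V(G_i)$ partition $V(H)$, which is exactly the vertex set of $H_n \circ \ldots \circ H_0$. Next I would handle edges with both endpoints in a single $V(H_i)$: because $H$ is induced in $G$ and $H_i$ is defined as the subgraph of $H$ induced on $V(H_i)$, an edge lies in $H$ with both ends in $V(H_i)$ if and only if it lies in $H_i$; and when $i \ge 1$ the $KS$-partition of $H_i$ is inherited from $G_i$, so $H_i$ genuinely appears as the $i$-th factor. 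The main content is the ``cross'' edges. Unwinding the definition of Tyshkevich composition, in $G_n \circ \ldots \circ G_0$ a vertex $u \in V(G_i)$ and a vertex $v \in V(G_j)$ with $i > j$ are adjacent precisely when $u$ lies in the clique part $K_i$ of $G_i$; this rule does not depend on $j$ at all. Consequently, for $u \in V(H_i)$, $v \in V(H_j)$, $i > j$, we have $uv \in E(G)$ iff $u \in K_i$, and since $H$ is induced, $uv \in E(H)$ iff $u \in K_i$ iff $u$ lies in the clique part of $H_i$ (the $KS$-partition being inherited). That is exactly the adjacency rule defining $H_n \circ \ldots \circ H_0$, so the cross edges match.

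Putting these three cases together — within a single block, between blocks, and the matching vertex partitions — gives $E(H) = E(H_n \circ \ldots \circ H_0)$ on the common vertex set, hence $H = H_n \circ \ldots \circ H_0$ as claimed.

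I do not expect a serious obstacle here; the statement is essentially a bookkeeping unwinding of Definition~\ref{def:composition} together with the fact that ``induced subgraph'' means edges are inherited exactly. The one point requiring a little care is making precise that the generalized (iterated) composition has the stated adjacency rule between non-adjacent factors $G_i, G_j$ — namely that $u \in V(G_i)$ is joined to all of $V(G_j)$ when $j < i$ and $u \in K_i$, independent of $j$. This follows by an easy induction on $n$ from the two-factor Definition~\ref{def:composition} and associativity, and once it is in hand the rest is immediate; one might alternatively phrase the whole argument as an induction on $n$, peeling off $G_n$ and applying the inductive hypothesis to $G_{n-1} \circ \ldots \circ G_0$.
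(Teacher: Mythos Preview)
Your proposal is correct. The paper does not give a formal proof of this lemma at all; it simply records the observation in the sentence preceding the statement (``removing the vertices of $G$ not in $H$ and maintaining the same $KS$-partitions in each split graph component provides a representation of $H$ as a composition'') and then moves on. Your write-up is a faithful and complete unpacking of exactly that observation, so there is nothing to compare against beyond noting that you have supplied the details the paper left implicit.
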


Lemma \ref{prop:Characterization:induced_composition} holds regardless of whether any components $G_i$ or $H_i$ are decomposable. 

Let $\I$ be the set of indecomposable graphs induced in a unigraph, that is, $H \in \I$ if and only if $H$ is indecomposable and there exists a unigraph $G$ for which $H$ is induced in $G$. We first characterize $HCU$ in terms of $\I$, then return to a characterization of $\I$. 

\begin{theorem}
\label{prop:Characterization:characterization}
    A graph is in $HCU$ if and only if each graph in its decomposition is in $\I$. 
\end{theorem}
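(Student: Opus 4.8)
The plan is to prove both directions using the Tyshkevich decomposition machinery already developed. For the forward direction, suppose $G \in HCU$, so $G$ is an induced subgraph of some unigraph $U$. Write the decomposition $G = G_n \circ \ldots \circ G_0$ into indecomposable components, and take the (not necessarily indecomposable) decomposition of $U$ into its indecomposable components; since $G$ is induced in $U$, Lemma~\ref{prop:Characterization:induced_composition} gives a representation of $G$ as a composition whose components are each induced subgraphs of corresponding pieces of $U$. The uniqueness part of Theorem~\ref{prop:Introduction:decomposition} then lets us identify each indecomposable component $G_i$ of $G$ with an induced subgraph of a portion of $U$. To conclude $G_i \in \I$, I need each $G_i$ to be induced in an honest-to-goodness unigraph; the cleanest route is to observe that $G_i$ is induced in $U$ itself (being induced in $G$, which is induced in $U$), so since $U$ is a unigraph and $G_i$ is indecomposable, $G_i \in \I$ by definition. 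This direction is essentially immediate once the definitions are unpacked.

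For the reverse direction, suppose $G = G_n \circ \ldots \circ G_0$ with each $G_i \in \I$. By definition of $\I$, for each $i$ there is a unigraph $U_i$ in which $G_i$ is induced. The idea is to build a single unigraph $U$ containing $G$ as an induced subgraph by composing the $U_i$ appropriately. The obstacle is that for the composition $U_n \circ \ldots \circ U_0$ to make sense and to contain $G$ as an induced subgraph via Lemma~\ref{prop:Characterization:induced_composition}, each $U_i$ for $i \ge 1$ must be a split graph equipped with a $KS$-partition extending (in the induced-subgraph sense) the $KS$-partition of $G_i$. Since $G_i$ is indecomposable and a split graph for $i \ge 1$, it has a unique $KS$-partition; I need to argue that $G_i$ is induced in some split unigraph $U_i$ with a $KS$-partition that restricts to the one on $G_i$. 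This should follow from examining how $G_i$ sits inside its witnessing unigraph $U_i$: apply Lemma~\ref{prop:Characterization:induced_composition} to the decomposition of $U_i$ to locate $G_i$ inside a single indecomposable component of $U_i$ (it cannot straddle the composition boundaries, being indecomposable — though one must handle the trivial $K_1$ components carefully), and that component is an indecomposable split unigraph from Theorem~\ref{prop:indecomposable_unigraphs}, hence carries the required $KS$-partition structure. Then $U = U_n \circ \ldots \circ U_0$ is a composition of split graphs with one final graph $U_0$; by Theorem~\ref{prop:Introduction:Tyshkevich_unigraph} it is a unigraph (each indecomposable component of $U$ is an indecomposable component of some $U_i$, hence a unigraph), and by Lemma~\ref{prop:Characterization:induced_composition} the graph $G$ is induced in $U$, so $G \in HCU$.

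The main obstacle, as flagged above, is the bookkeeping in the reverse direction: ensuring that when we replace each indecomposable piece $G_i$ by a unigraph $U_i$ containing it, the $KS$-partitions line up so that the composition $U_n \circ \ldots \circ U_0$ is well-defined and genuinely contains $G$ as an induced subgraph. This requires knowing that an indecomposable split graph in $\I$ is induced in an \emph{indecomposable split} unigraph with a compatible partition, rather than merely in some unigraph; the key facts are the uniqueness of $KS$-partitions for indecomposable split graphs (Remark~\ref{prop:indecomposable_DIS}) and the fact that an indecomposable subgraph of a composition lands inside a single component (a consequence of Lemma~\ref{prop:Characterization:induced_composition} together with the structure of compositions). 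Once this is in place, the rest is a routine application of Theorems~\ref{prop:Introduction:decomposition} and~\ref{prop:Introduction:Tyshkevich_unigraph}.
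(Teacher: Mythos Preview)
Your forward direction is correct and actually cleaner than the paper's: each indecomposable component $G_i$ of $G$ is induced in $G$, hence in the witnessing unigraph $U$, so $G_i \in \I$ immediately by definition. The paper instead decomposes $U$, writes $G$ as a composition along that decomposition via Lemma~\ref{prop:Characterization:induced_composition}, and then further decomposes each resulting piece into indecomposables---machinery that is unnecessary for this implication, though it does make explicit which indecomposable unigraph each $G_i$ sits inside.

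For the reverse direction you are right to flag the $KS$-partition issue; the paper simply forms $U_n \circ \cdots \circ U_0$ without verifying that each $U_i$ with $i\ge 1$ can be taken to be split with a partition restricting to that of $G_i$. Your proposed fix---pass to a single indecomposable component of $U_i$ containing $G_i$---is essentially Lemma~\ref{prop:induced_indecomposable} and is the right move. However, your assertion that ``that component is an indecomposable split unigraph'' is not justified: an indecomposable split $G_i$ can land in the rightmost, non-split component. Concretely, $P_4$ is an indecomposable split graph induced in $C_5 \in \U$, and $C_5$ is non-split; nothing in your argument rules this out.

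The repair is short but does require an extra step. If $G_i$ is itself a unigraph, take $U_i = G_i$; otherwise $G_i$ is a split member of $\NN$, and one must check---either by inspecting the non-split families in Theorem~\ref{prop:indecomposable_unigraphs} directly, or by invoking the later Theorem~\ref{prop:indecomposable_nonunigraphs}---that every such $G_i$ is also induced in some split member of $\U$. Once $U_i$ is split and indecomposable, uniqueness of its $KS$-partition (Remark~\ref{prop:indecomposable_DIS}) forces the inherited partition on $G_i$ to agree with the unique one $G_i$ already carries, so compatibility is automatic and the composition $U_n \circ \cdots \circ U_0$ is well-defined and contains $G$ as claimed.
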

\begin{proof}
    First, let $G$ be a graph with decomposition $G_n \circ \ldots \circ G_0$ where $G_i \in \I$ for $0 \le i \le n$. By definition of $\I$, there exists a unigraph $U_i$ containing $G_i$ as an induced subgraph. By the definition of composition, $G=G_n \circ \ldots \circ G_0$ is induced in $U_n \circ \ldots \circ U_0$, which is a unigraph by Theorem \ref{prop:Introduction:Tyshkevich_unigraph}. Hence $G$ is induced in a unigraph and therefore $G \in HCU$. 

    For the reverse direction, let $G \in HCU$; thus there exists a unigraph $U$ with $G$ induced in $U$. Let $U = U_n \circ \ldots \circ U_0$ be the decomposition of $U$, so each $U_i$ is an indecomposable unigraph by Theorem \ref{prop:Introduction:Tyshkevich_unigraph}. Let $G_i = G \cap U_i$, thus each $G_i$ is induced in $U_i$ and by Lemma \ref{prop:Characterization:induced_composition}, $G=G_n \circ \ldots \circ G_0$. Since the graph $G_i$ is induced in $U_i$, we know for all $i$ that $G_i$ is an induced subgraph of an indecomposable unigraph. However, $G_i$ might not be indecomposable. For each $i$, let $G_{i,m_i} \circ \ldots \circ G_{i,0}$ be the decomposition of $G_i$. Now, for all $j \le m_i$, $G_{i,j}$ is indecomposable and induced in $G_i$, which in turn is induced in $U_i$. Therefore $G_{i,j}$ is induced in $U_i$, and so $G_{i,j} \in \I$ for all $(i,j)$. By substitution, the decomposition of $G$ into indecomposable parts is $G=(G_{n,m_n} \circ \ldots \circ G_{n,0}) \circ \text{\LARGE\ensuremath \ldots} \circ (G_{0,m_0} \circ \ldots \circ G_{0,0}).$ Since the composition relation $\circ$ is associative, we conclude that the decomposition of $G$ is $G=G_{n,m_n} \circ \ldots \circ G_{0,0}$, and each of these components is in $\I$. 
\end{proof}

Recall from Remark \ref{prop:commute_complement_inverse} that complements and inverses both commute with composition, albeit with reordered components. In conjunction with this observation, we conclude from Theorem \ref{prop:Characterization:characterization} that $HCU$ is closed under complementation, and the subset of split graphs within $HCU$ is closed under inversion. 

\begin{corollary} 
\label{prop:HCU_closed_complements}
    A graph $G$ is in $HCU$ if and only if $\overline{G}$ is in $HCU$. If $G$ is a split graph and $K \cup S$ is a $KS$-partition of $G$, then either all or none of $G, \overline{G}, \GI, \overline{\GI}$ are in $HCU$. 
\end{corollary}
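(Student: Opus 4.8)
The plan is to derive the corollary directly from Theorem~\ref{prop:Characterization:characterization} together with Remark~\ref{prop:commute_complement_inverse} and Remark~\ref{prop:indecomposable_complement_inverse}, so that the entire argument reduces to the already-established fact that $\I$ is closed under the relevant operations. The key observation is that Theorem~\ref{prop:Characterization:characterization} characterizes membership in $HCU$ purely in terms of the indecomposable components of a graph's Tyshkevich decomposition, and both complementation and inversion act on decompositions in a transparent, component-wise way.

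\medskip

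First I would record the (essentially known) fact that $\I$ is closed under complementation: if $H \in \I$, then $H$ is indecomposable and induced in some unigraph $U$; then $\overline{H}$ is induced in $\overline{U}$, which is a unigraph by Remark~\ref{prop:unigraph_complement_inverse}, and $\overline{H}$ is indecomposable by Remark~\ref{prop:indecomposable_complement_inverse}, so $\overline{H} \in \I$. Similarly, if $H \in \I$ is a split graph with a $KS$-partition, then (since $H$ is indecomposable it has a unique $KS$-partition, and its unique inverse $H^\inv$ is well-defined) $H^\inv$ is induced in $U^\inv$ for a suitable realization; more carefully, one should note that $H$ sits inside a unigraph $U$, and we may take $U$ to be split (e.g.\ realize the degree sequence obtained by placing $H$ as a component in a composition with a threshold graph, or simply use that $H$ itself is a split unigraph candidate), so that $H^\inv$ is induced in $U^\inv$, which is a unigraph by Remark~\ref{prop:unigraph_complement_inverse}; again $H^\inv$ is indecomposable by Remark~\ref{prop:indecomposable_complement_inverse}, so $H^\inv \in \I$. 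Combining, $\overline{H^\inv} \in \I$ as well.

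\medskip

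Now for the first statement of the corollary: suppose $G \in HCU$ with decomposition $G = G_n \circ \ldots \circ G_0$. By Remark~\ref{prop:commute_complement_inverse}, $\overline{G} = \overline{G_n} \circ \ldots \circ \overline{G_0}$, and by Remark~\ref{prop:indecomposable_complement_inverse} each $\overline{G_i}$ is indecomposable, so this is precisely the Tyshkevich decomposition of $\overline{G}$. By Theorem~\ref{prop:Characterization:characterization}, $G \in HCU$ means each $G_i \in \I$; by the closure of $\I$ under complementation just established, each $\overline{G_i} \in \I$; hence by Theorem~\ref{prop:Characterization:characterization} again, $\overline{G} \in HCU$. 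The converse follows by applying this to $\overline{G}$ and using $\overline{\overline{G}} = G$. For the split statement: if $G$ is split with $KS$-partition $K \cup S$, then by Remark~\ref{prop:commute_complement_inverse} the inverse $G^\inv$ (with respect to the induced $KS$-partitions) has decomposition $G^\inv = G_0^\inv \circ \ldots \circ G_n^\inv$, each component indecomposable by Remark~\ref{prop:indecomposable_complement_inverse}; applying the closure of $\I$ under inversion and Theorem~\ref{prop:Characterization:characterization} gives $G \in HCU \iff G^\inv \in HCU$, and then combining with the complementation equivalence covers all four of $G, \overline{G}, \GI, \overline{\GI}$.

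\medskip

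The main obstacle I anticipate is the bookkeeping around inverses and $KS$-partitions: the inverse operation is only defined relative to a fixed $KS$-partition, and one must be careful that the $KS$-partition a split graph inherits as a component of a composition is consistent with the one used to define membership of the components in $\I$. Since indecomposable split graphs have unique $KS$-partitions (noted just after Remark~\ref{prop:indecomposable_DIS}), this ambiguity disappears at the level of indecomposable components, which is exactly where Theorem~\ref{prop:Characterization:characterization} operates --- so the cleanest write-up keeps all inversion arguments at the level of the indecomposable pieces and invokes Remark~\ref{prop:commute_complement_inverse} only to relate the decomposition of $G^\inv$ to that of $G$. A secondary subtlety is verifying that an indecomposable split graph $H \in \I$ really is induced in a \emph{split} unigraph (so that $U^\inv$ makes sense); this can be arranged by noting that if $H$ is induced in a unigraph $U$, then $H$ is also induced in the split unigraph $U' \circ H$ (or similar composition construction placing a threshold graph on the left), whose inverse is a unigraph --- or, most simply, one observes the characterization of $\I$ to be developed later respects complement/inverse symmetry, but for this corollary the composition trick suffices and should be spelled out in a sentence.
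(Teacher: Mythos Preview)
Your overall approach --- reduce to closure of $\I$ under complementation and inversion via Theorem~\ref{prop:Characterization:characterization} and Remark~\ref{prop:commute_complement_inverse} --- is exactly the route the paper takes (the paper's entire justification is the single sentence preceding the corollary). For complementation your argument is complete; indeed an even shorter direct proof is available without any decomposition: if $G$ is induced in a unigraph $U$, then $\overline{G}$ is induced in the unigraph $\overline{U}$.

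You are right to flag the inversion subtlety, which the paper glosses over entirely, but your proposed fix is wrong. You claim that a split indecomposable $H\in\I$ is induced in ``the split unigraph $U'\circ H$'' for some threshold graph $U'$; however, by Theorem~\ref{prop:Introduction:Tyshkevich_unigraph} the composition $U'\circ H$ is a unigraph only when $H$ itself is one, and this fails precisely when $H\in\NN$. A correct resolution is to first argue (reproducing the short proof of the forthcoming Lemma~\ref{prop:induced_indecomposable} using Lemma~\ref{prop:Characterization:induced_composition}) that $H$ is induced in some \emph{indecomposable} unigraph $U_j\in\U$. If $U_j$ is split, then it has a unique $KS$-partition extending that of $H$, so $H^I$ is induced in the unigraph $U_j^I$ and you are done. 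If $U_j$ is non-split, appeal to the explicit list in Theorem~\ref{prop:indecomposable_unigraphs}: a short case check shows that every indecomposable split graph induced in $C_5$, in a member of $\mathcal{T}_1,\mathcal{T}_2,\mathcal{T}_3$, or in a complement of these is itself a split member of $\U$, and Theorem~\ref{prop:indecomposable_unigraphs} already asserts that the split part of $\U$ is closed under inversion, giving $H^I\in\U\subseteq\I$.
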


This implies the same of minimal forbidden induced subgraphs of $HCU$. Furthermore, such graphs must be indecomposable, else some proper indecomposable component of a forbidden graph must also be forbidden. 

\begin{corollary}
\label{prop:MFIS_indecomp_closed}
    If $G$ is a minimal forbidden induced subgraph of $HCU$, then $G $ is indecomposable, and $\overline{G}, \GI$, and $\overline{\GI}$ are each also minimal forbidden induced subgraphs. 
\end{corollary}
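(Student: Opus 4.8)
The plan is to prove the two assertions separately: indecomposability will follow from Theorem~\ref{prop:Characterization:characterization}, while closure under complementation and inversion will follow from Corollary~\ref{prop:HCU_closed_complements} together with the fact that these operations commute with passing to induced subgraphs.

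For indecomposability I would argue by contradiction. Suppose $G$ is a minimal forbidden induced subgraph of $HCU$ and $G = G_n \circ \cdots \circ G_0$ is its decomposition into indecomposable parts, with $n \ge 1$ and every $G_i$ nonempty. Since $G \notin HCU$, Theorem~\ref{prop:Characterization:characterization} forces $G_i \notin \I$ for some $i$; as $G_i$ is indecomposable its own decomposition is just $G_i$, so Theorem~\ref{prop:Characterization:characterization} also yields $G_i \notin HCU$. But each component of a composition is an induced subgraph of the whole, and since $n \ge 1$ some other component is nonempty, so $G_i$ is a \emph{proper} induced subgraph of $G$ lying outside $HCU$, contradicting minimality. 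Hence $G$ is indecomposable; in particular, if $G$ happens to be a split graph it has a unique $KS$-partition (Remark~\ref{prop:indecomposable_DIS}), so $\GI$ and $\overline{\GI}$ are unambiguously defined.

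For the remaining claims, the key structural fact is that complementation, inversion (on a split graph with a fixed $KS$-partition), and inverse-complementation are involutions that preserve the number of vertices and commute with taking induced subgraphs: for $W \subseteq V(G)$ one has $\overline{G}[W] = \overline{G[W]}$, and, when $G$ is split, $\GI[W] = (G[W])^\inv$ and $\overline{\GI}[W] = \overline{(G[W])^\inv}$, where each right-hand side uses the $KS$-partition that $W$ inherits. Granting this, the corollary follows quickly: Corollary~\ref{prop:HCU_closed_complements} gives that these operations preserve non-membership in $HCU$, so $\overline{G}, \GI$, and $\overline{\GI}$ are all forbidden; and if, say, $H$ were a proper induced subgraph of $\overline{G}$ with $H \notin HCU$, then $\overline{H}$ would be a proper induced subgraph of $G$ with $\overline{H} \notin HCU$ (again by Corollary~\ref{prop:HCU_closed_complements}), contradicting the minimality of $G$. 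The same argument, using the corresponding commutation identity, upgrades $\GI$ and $\overline{\GI}$ to minimal forbidden induced subgraphs (here $H$ is automatically split, being induced in a split graph, so the inverse versions of Corollary~\ref{prop:HCU_closed_complements} apply).

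The step I expect to require the most care is the commutation identity for the inverse. It is ultimately routine: inversion toggles adjacency within the clique block and within the stable block and leaves cross edges alone, and each of these toggles commutes with deleting vertices; the one thing to verify is that the two blocks $W$ inherits as a subgraph of $G$ coincide with the two blocks it inherits as a subgraph of $\GI$ (with the clique and stable-set roles interchanged). Once that is settled, the rest of the proof is bookkeeping with Theorem~\ref{prop:Characterization:characterization} and Corollary~\ref{prop:HCU_closed_complements}.
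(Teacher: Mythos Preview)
Your proposal is correct and follows essentially the same approach as the paper, which dispatches the corollary in two sentences preceding its statement: indecomposability because otherwise a proper component would already be forbidden, and closure under complement/inverse from Corollary~\ref{prop:HCU_closed_complements}. You have simply filled in the details the paper omits, in particular the commutation of inversion with passing to induced subgraphs and the verification that minimality is preserved.
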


We now present a characterization of $\I$. First we show we need only consider graphs induced in \textit{indecomposable} unigraphs.

\begin{lemma}
\label{prop:induced_indecomposable}
    If an indecomposable graph is induced in a unigraph, then it is also induced in an indecomposable unigraph.
\end{lemma}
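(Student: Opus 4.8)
The plan is to start with an indecomposable graph $H$ that is induced in some unigraph $U$, take the Tyshkevich decomposition $U = U_n \circ \cdots \circ U_0$ (so each $U_i$ is an indecomposable unigraph by Theorem \ref{prop:Introduction:Tyshkevich_unigraph}), and show that $H$ is already induced in one of the components $U_i$. First I would apply Lemma \ref{prop:Characterization:induced_composition}: setting $H_i = H \cap U_i$ and inheriting the $KS$-partitions, we get $H = H_n \circ \cdots \circ H_0$. Now $H$ is indecomposable, so this composition must be trivial — all but one of the $H_i$ must be empty. The key point is that a composition $H_n \circ \cdots \circ H_0$ in which two or more factors are nonempty is decomposable: if $H_j$ is the leftmost nonempty factor, then either $j \ge 1$ and $H_j$ is a nonempty split graph witnessing $H = H_j \circ (H_{j-1} \circ \cdots \circ H_0)$ as a nontrivial composition, or $j = 0$ is the only nonempty factor. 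So exactly one $H_i$ is nonempty, i.e. $H = H_i$ is induced in the indecomposable unigraph $U_i$, and we are done.

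The small gap to fill is the edge case where $H$ meets only $U_0$ (the possibly-non-split component): then $H = H_0$ is indeed induced in $U_0$, which is an indecomposable unigraph, so there is nothing more to prove. And if $H$ meets only some $U_i$ with $i \ge 1$, then $H = H_i$ is induced in the indecomposable split unigraph $U_i$. Either way the conclusion holds. One should be slightly careful that "nonempty" is used consistently — Definition \ref{def:indecomposable} requires both $G_1$ and $G_0$ nonempty for decomposability, so if only one factor among the $H_i$ is nonempty the composition is trivial and consistent with $H$ being indecomposable; if two or more are nonempty we genuinely have a nontrivial decomposition, contradicting indecomposability of $H$.

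The main (very mild) obstacle is just making the "trivial composition" argument airtight: namely that if $H$ is indecomposable and $H = H_n \circ \cdots \circ H_0$ with the $H_i$ possibly decomposable, then at most one $H_i$ is nonempty. This follows because composition is associative, so grouping all factors to the left of the leftmost nonempty one (which are empty and act as identities) and noting that the leftmost nonempty factor $H_j$ is a split graph when $j\ge 1$ exhibits $H$ as $H_j \circ (\text{rest})$; for this to be trivial we need "the rest" to be empty. I would phrase this cleanly rather than invoking Theorem \ref{prop:Introduction:decomposition} directly, since the $H_i$ need not be indecomposable. Everything else is bookkeeping, and the proof should be only a few lines.
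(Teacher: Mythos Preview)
Your proposal is correct and follows essentially the same argument as the paper: decompose $U$, apply Lemma~\ref{prop:Characterization:induced_composition} to write $H$ as a composition of the $H_i$, and use indecomposability of $H$ to conclude only one $H_i$ is nonempty. The paper is terser (it simply cites Definition~\ref{def:indecomposable} for the ``only one nonempty factor'' step), while you spell out the associativity and leftmost-nonempty-factor reasoning more carefully, but the content is the same.
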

\begin{proof}
Let $G$ be an indecomposable graph induced in a unigraph $U$. By Theorem \ref{prop:Introduction:Tyshkevich_unigraph}, each component $U_i$ of the decomposition $U = U_n \circ \ldots \circ U_0$ is also a unigraph. Now by Lemma \ref{prop:Characterization:induced_composition}, we can write $G$ as a composition $G = G_n \circ \ldots \circ G_0$ where $G_i$ is the graph induced in $G$ by $V(U_i)$. Since $G$ is indecomposable, Definition \ref{def:indecomposable} implies that there is only one nonempty component $G_j$ in this composition, so $G= G_j$. Thus $G$ is induced in the indecomposable unigraph $U_j$. 
\end{proof}

Let $\NN$ be the set of non-unigraphs in $\I$. Thus $\I = \U \cup \NN$, and by Lemma \ref{prop:induced_indecomposable}, $\NN$ is the class of indecomposable non-unigraphs induced in a graph in $\U$. The class $\NN$ consists of five graph families and was characterized in \cite{Wh20}. We present this result in Theorem \ref{prop:indecomposable_nonunigraphs} and give a sketch of the proof. 

The proof of Theorem \ref{prop:indecomposable_nonunigraphs} uses the next lemma to identify indecomposable graphs. Lemma \ref{prop:indecomposable_A4} follows from the observation that in a composition, vertices inducing the indecomposable graphs $P_4, C_4,$ and $2K_2$ cannot be separated into multiple indecomposable components. A more general result can be found in Barrus and West \cite{BaWe12}.

\begin{lemma} \cite{BaWe12}
\label{prop:indecomposable_A4}
    If $G$ is a graph and there exists a vertex $v \in V(G)$ such that for all $w \in V(G)$, there exists an induced $P_4, C_4, $ or $2K_2$ containing both $v$ and $w$, then $G$ is indecomposable. 
\end{lemma}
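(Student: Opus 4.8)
The plan is to prove the contrapositive in spirit: assume $G$ satisfies the stated hypothesis but is \emph{decomposable}, and derive a contradiction. By Definition~\ref{def:indecomposable}, decomposability gives nonempty graphs $G_1,G_0$ with $G_1$ a split graph carrying a fixed $KS$-partition $K\cup S$ and $G=G_1\circ G_0$. From Definition~\ref{def:composition}, the vertex set of $G$ is partitioned as $K\cup S\cup V(G_0)$, where $K$ is complete to $V(G_0)$ and $S$ is anticomplete to $V(G_0)$.

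The heart of the argument is the claim that every induced subgraph $H$ of $G$ isomorphic to $P_4$, $C_4$, or $2K_2$ satisfies $V(H)\subseteq V(G_0)$ or $V(H)\subseteq V(G_1)$. Granting this, I would finish quickly. Let $v$ be the vertex from the hypothesis. If $v\in V(G_0)$, choose any $w\in V(G_1)$ (possible since $V(G_1)\neq\emptyset$); if $v\in V(G_1)$, choose $w\in V(G_0)$ instead. By hypothesis there is an induced $P_4$, $C_4$, or $2K_2$ containing both $v$ and $w$, and this subgraph meets both $V(G_0)$ and $V(G_1)$, contradicting the claim. Hence $G$ is indecomposable.

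To prove the claim, suppose some induced $H\in\{P_4,C_4,2K_2\}$ meets both parts. Put $P=V(H)\cap V(G_0)$, $Q_K=V(H)\cap K$, and $Q_S=V(H)\cap S$; then $P\neq\emptyset$ and $Q_K\cup Q_S\neq\emptyset$, and $|P|+|Q_K|+|Q_S|=4$. Reading off adjacencies from the composition: in $H$, the set $Q_K$ induces a clique all of whose vertices are adjacent to every vertex of $P$, while $Q_S$ induces a stable set all of whose vertices are nonadjacent to every vertex of $P$. Now use the structural fact that $P_4$, $C_4$, and $2K_2$ all have minimum degree $1$ and maximum degree $2$. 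A short case analysis on $(|P|,|Q_K|,|Q_S|)$ produces a contradiction each time: if $Q_S=\emptyset$ (whence $Q_K\neq\emptyset$), a vertex of $Q_K$ is adjacent to the other three vertices of $H$ and so has degree $3$; if $Q_K=\emptyset$ (whence $Q_S\neq\emptyset$), a vertex of $Q_S$ is nonadjacent to the other three and so has degree $0$; and in each remaining (mixed) case one checks that either a vertex of $Q_S$ is forced to be isolated in $H$ or a vertex of $Q_K$ is forced to have degree $3$. All of these contradict the degree bounds, establishing the claim.

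The only nonroutine step is the bookkeeping in this final case analysis, and even that can be halved: the family $\{P_4,C_4,2K_2\}$ is closed under complementation ($\overline{P_4}=P_4$ and $\overline{C_4}=2K_2$), and complementing $G=G_1\circ G_0$ swaps the clique and stable set of $G_1$ by Remark~\ref{prop:commute_complement_inverse}, hence swaps the roles of $Q_K$ and $Q_S$; so the $Q_S=\emptyset$ case is dual to the $Q_K=\emptyset$ case and the mixed cases pair up, and one needs to treat only representatives. Everything else — invoking Definition~\ref{def:indecomposable}, reading the adjacency pattern across a composition, and the final contradiction — is immediate.
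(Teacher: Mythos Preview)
Your argument is correct and follows the same idea the paper sketches: the paper merely observes (citing \cite{BaWe12}) that in a composition the vertices of an induced $P_4$, $C_4$, or $2K_2$ cannot straddle components, and you have supplied a clean proof of exactly that claim via the degree bounds $1\le\delta(H)\le\Delta(H)\le 2$ together with the clique/stable-set structure of $K$ and $S$ relative to $V(G_0)$. Your complementation remark, using that $\{P_4,C_4,2K_2\}$ is closed under complements and that Remark~\ref{prop:commute_complement_inverse} swaps $K$ and $S$, is a nice efficiency and is not in the paper's one-line treatment.
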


\begin{theorem}
\label{prop:indecomposable_nonunigraphs}
    A graph $G \in \NN$ if and only if $G$ contains no dominating, isolated, or swing vertices, and at least one of the following holds for $G$, its complement, or, if $G$ is a split graph, its inverse or inverse-complement: (See Figures \ref{fig:NonSplit} and \ref{fig:Split} and definitions preceding Theorem \ref{prop:indecomposable_unigraphs}.)
    \begin{enumerate}
        \item [\rm{(1)}] $G$ is induced in a graph in $\mathcal{T}_3$; $a,b,c,d \in V(G)$; and $c$ has a neighbor of degree $1$.
        \item [\rm{(2)}] $G$ is induced in a graph in $\mathcal{T}_3$; $a,c \in V(G)$; exactly one of $b$ and $d$ is in $V(G)$; and $G$ contains a $K_3$ subgraph.
        \item [\rm{(3)}] $G$ is induced in a graph in ${\mathcal{T}_6}$; $e \in V(G)$ with $\deg{e} \ge 2$; and there exists a pair of clique vertices of unequal degree, exactly one of which is adjacent to $e$. 
        \item [\rm{(4)}] $G$ is induced in a graph in $\mathcal{T}_7$; $e \in V(G)$ with $\deg{e}=2$; and there exists a pair of clique vertices of unequal degree, exactly one of which is adjacent to $e$. 
        \item [\rm{(5)}] $G$ is induced in a graph in $\mathcal{T}_7$; $e, f, x_1 \in V(G)$; there exists a vertex of degree $1$ in $G$; and some vertex of degree $2$ has a neighbor other than $x_1$ and $f$. 
    \end{enumerate}
\end{theorem}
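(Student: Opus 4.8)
The plan is to characterize $\NN$ by exploiting the fact (from Lemma~\ref{prop:induced_indecomposable}) that every graph in $\NN$ is an indecomposable non-unigraph induced in one of the indecomposable unigraphs from Theorem~\ref{prop:indecomposable_unigraphs}, and to show that the non-split families $\mathcal{T}_1, \mathcal{T}_2, C_5$ and the split families $\mathcal{T}_4, \mathcal{T}_5$ contribute nothing new, so that all interesting members arise from $\mathcal{T}_3$, $\mathcal{T}_6$, and $\mathcal{T}_7$ (and their complements/inverses). First I would record, via Corollary~\ref{prop:HCU_closed_complements}-style reasoning and Remark~\ref{prop:indecomposable_complement_inverse}, that $\NN$ is closed under complement and (for split graphs) under inverse and inverse-complement; this justifies the ``$G$, its complement, or its inverse or inverse-complement'' phrasing and lets me work with one representative from each orbit. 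It also reduces the non-split case: since $\mathcal{T}_1$ and $\mathcal{T}_2$ and their complements are handled together, I only need to examine the ``downward closure'' of each $T_i$ family up to these symmetries.

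Next I would dispatch the easy families. For $C_5$: every proper induced subgraph of $C_5$ is a path, hence decomposable (it has an isolated or dominating vertex, or is too small), so $C_5$ itself is the only indecomposable graph induced in $C_5$, and it is a unigraph; similarly for $\overline{C_5} = C_5$. For $\mathcal{T}_1$ (disjoint unions of $K_2$'s): an induced subgraph is a smaller disjoint union of $K_2$'s and isolated vertices; deleting isolated vertices (which is forced if we want indecomposability, by Remark~\ref{prop:indecomposable_DIS}) leaves $mK_2$, a unigraph. So $\mathcal{T}_1$ contributes no non-unigraphs, and dually neither does $\overline{\mathcal{T}_1}$. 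For $\mathcal{T}_2$ one argues the same way: induced subgraphs are $mK_2 + K_{1,n'} + (\text{isolated vertices})$, and after removing isolated vertices one has either $mK_2$, a star, or $mK_2 + K_{1,n'}$ — each a unigraph. Then $\mathcal{T}_4$ and $\mathcal{T}_5$ (and their inverses/complements): an induced subgraph of $T_5(p_1,q_1,\ldots)$ is again of the form ``clique vertices each with some leaves, plus stable vertices adjacent to all of the clique'' after removing swing/dominating/isolated vertices; this is again a $\mathcal{T}_4$ or $\mathcal{T}_5$ graph (or smaller), hence a unigraph. The key structural fact making all of these easy is that these families are ``internally hereditary'' up to the three trivial vertex types.

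The heart of the proof is the analysis of $\mathcal{T}_3$, $\mathcal{T}_6$, and $\mathcal{T}_7$. For each, I would: (a) enumerate, up to isomorphism, the indecomposable induced subgraphs obtained by deleting vertices while avoiding dominating/isolated/swing vertices; (b) among those, identify which are non-unigraphs; and (c) repackage the defining data of each such subgraph into the combinatorial conditions (1)--(5). For $\mathcal{T}_3$ — a $C_4$ and $m$ triangles sharing a vertex, with the $C_4$ having vertices I would call $a,b,c,d$ in cyclic order with $a$ the shared vertex (matching the figure's labels) — an induced subgraph keeps some of $\{a,b,c,d\}$ and some of the triangle-pendant structure. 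The graph fails to be a unigraph precisely when it contains two non-isomorphic realizations of a common degree sequence; tracking when a $P_4$ or $2K_2$ or $C_4$ survives (needed for indecomposability by Lemma~\ref{prop:indecomposable_A4}) together with a degree-$1$ vertex attached at $c$, one gets condition~(1); the case where only a $K_3$ survives with part of the $C_4$ gives condition~(2). For $\mathcal{T}_6$ and $\mathcal{T}_7$ — split graphs with a distinguished stable vertex $e$ (and, in $\mathcal{T}_7$, a distinguished clique vertex $f$) — I would use the fact that in a split graph, non-unigraphicity is detected by Rao/Tyshkevich-style swaps among clique vertices of different degrees; the obstruction survives in an induced subgraph exactly when $e$ (with its degree constraint $\geq 2$ or $=2$) is present together with a pair of clique vertices of unequal degree, exactly one adjacent to $e$ — conditions~(3) and~(4) — plus the extra configuration in $\mathcal{T}_7$ recorded in~(5) involving $f$, a $\deg$-$1$ vertex, and a $\deg$-$2$ vertex with a neighbor outside $\{x_1, f\}$.

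The main obstacle will be step (a)--(c) for $\mathcal{T}_6$ and $\mathcal{T}_7$: these split families have three or four free parameters, their induced subgraphs form a genuinely large poset, and checking indecomposability (no swing vertex, i.e.\ no clique vertex whose degree equals the clique number) interacts subtly with checking non-unigraphicity. The bookkeeping — keeping $e$'s degree, the clique-degree multiset, and the leaf structure all straight while deleting vertices — is where the real work lies, and where I expect to need careful case analysis, likely organized by which of the distinguished vertices $e$ and $f$ survive and by the degree of $e$. Everything else (the symmetry reductions and the easy families) is routine, and the forward direction of the ``if and only if'' — checking that each graph satisfying one of (1)--(5) is indeed indecomposable, non-unigraphic, and induced in the relevant $T_i$ — is a finite verification against the figures.
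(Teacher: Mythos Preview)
Your proposal is correct and follows essentially the same approach as the paper: reduce via Lemma~\ref{prop:induced_indecomposable} to induced subgraphs of the indecomposable unigraphs in Theorem~\ref{prop:indecomposable_unigraphs}, use closure under complement/inverse to work with one representative, dispatch $C_5$, $\mathcal{T}_1$, $\mathcal{T}_2$, $\mathcal{T}_4$, $\mathcal{T}_5$ as belonging to $HU$, and then do the case analysis on $\mathcal{T}_3$, $\mathcal{T}_6$, $\mathcal{T}_7$; for the reverse direction the paper, like you, verifies non-unigraphicity by an explicit 2-switch and indecomposability via Lemma~\ref{prop:indecomposable_A4}. One small slip: in $T_3(m)$ the shared vertex is $c$, not $a$ (so $a$ is the vertex of the $C_4$ opposite $c$), but your later references to ``degree-$1$ vertex attached at $c$'' are consistent with the correct labeling.
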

\begin{proof}
If $G \in \NN$, then by Lemma \ref{prop:induced_indecomposable}, $G$ is an indecomposable non-unigraph and $G$ is induced in a graph listed in Theorem \ref{prop:indecomposable_unigraphs}. It is not hard to show that the graphs $C_5$, $K_1$, and those in ${\mathcal{T}_1}, {\mathcal{T}_2}, {\mathcal{T}_4},$ and ${\mathcal{T}_5}$ are in $HU$, so we need only consider when $G$, its complement, and, if $G$ is a split graph, its inverse or inverse-complement, are induced in graphs in ${\mathcal{T}_3}$, ${\mathcal{T}_6}$, or ${\mathcal{T}_7}$. Without loss of generality, suppose $G$ itself is induced in one of these families. 

We provide a detailed argument in the case that $G$ is induced in $U = T_3(m)$. We refer to the vertex labels that are in Figure \ref{fig:NonSplit}. If $c \not\in V(G)$, then $G$ is induced in a graph in ${\mathcal{T}_2}$ and so is a unigraph. Thus $c \in V(G)$. If $a \not\in V(G)$, then $c$ is a dominating vertex and $G$ is decomposable, contradicting $G \in \NN$. Thus $a \in V(G)$. Similarly, vertex $a$ cannot be an isolated vertex, so without loss of generality we assume $b \in V(G)$. From here, there are four possibilities. First, if $G$ contains neither $C_4$ nor any $C_3$, then $G \in {\mathcal{T}_5}$ and $G$ is an indecomposable unigraph by Theorem \ref{prop:indecomposable_unigraphs}. Next, if $G$ contains some $C_3$, but not $C_4$, then $G$ satisfies (2). If $G$ contains $C_4$, but $c$ has no degree $1$ neighbors, then either $G=C_4$ or $G=T_3(k)$ for some $k$, and is a unigraph in either case. Lastly, if $G$ contains $C_4$ and $c$ has at least one degree $1$ neighbor, then $G$ satisfies (1). 

We omit the cases where $G$ is induced in a graph in ${\mathcal{T}_6}$ or ${\mathcal{T}_7}$. 

For the reverse direction, let $G$ be a graph without dominating, isolated, or swing vertices, and so that one of $G$, its complement, and, if $G$ is a split graph, its inverse and inverse-complement satisfy one of (1) to (5). Suppose without loss of generality that $G$ itself satisfies one of these. We provide a detailed argument in the case that $G$ satisfies (1). In this case, $G$ is induced in $U \in {\mathcal{T}_3}$, the subset $\{a,b,c,d\}$ induces $C_4$, and $c$ has at least one degree $1$ neighbor $v$. It suffices to show $G$ is a non-unigraph and indecomposable. Create a graph $G'$ from $G$ by deleting edge $ad$ and adding edge $vd$. Note that $G$ and $G'$ have the same degree sequence but are not isomorphic, since $G'$ has no induced $C_4$. We conclude that $G$ is a non-unigraph. 

Next, we show that $G$ is indecomposable. The set of vertices $\{a,b,c,d\}$ induces $C_4$ in $G$ and any other vertex $w \in V(G)$ together with vertices $a,b,c$ induces $P_4$. Thus every vertex of $G$ is part of an induced $P_4$ or $C_4$ containing vertex $c$, so by Lemma \ref{prop:indecomposable_A4}, $G$ is indecomposable.

We omit proofs of the remaining cases, which are similar. 
\end{proof}

As stated above, $\I = \U \cup \NN$, so our characterization of $\NN$ in Theorem \ref{prop:indecomposable_nonunigraphs} and Tyshkevich's characterization of $\U$ in Theorem \ref{prop:indecomposable_unigraphs} together give a complete list of the graphs in $\I$. By Theorem \ref{prop:Characterization:characterization}, the composition of elements in $\I$ gives us exactly the class $HCU$.

\section{A degree sequence characterization of $HCU$}

In this section, we will characterize the degree sequences of graphs in $HCU$. We begin by showing that membership in the class $HCU$ is determined by degree sequence, ensuring the existence of such a characterization. 

\begin{lemma}
\label{prop:HCU_closed_degree}
    If graphs $G$ and $G'$ have the same degree sequence and $G \in HCU$, then $G' \in HCU$. 
\end{lemma}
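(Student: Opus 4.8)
The plan is to sidestep the Tyshkevich machinery entirely and argue directly with the classical $2$-switch theorem: if two graphs on a common vertex set have the same degree at every vertex, then one can be transformed into the other by a finite sequence of \emph{$2$-switches}, each of which deletes two edges $ab$ and $cd$ on four distinct vertices with $ac$ and $bd$ nonadjacent, and inserts the edges $ac$ and $bd$, thereby preserving every vertex degree. Since $G \in HCU$, fix a unigraph $U$ in which $G$ occurs as an induced subgraph, and regard $V(G)$ as a subset of $V(U)$. As $G$ and $G'$ have the same degree sequence, choose a bijection $\phi\colon V(G)\to V(G')$ with $\deg_{G'}(\phi(v))=\deg_G(v)$ for all $v$, and let $\widetilde{G}$ be the graph on vertex set $V(G)$ obtained by pulling $G'$ back along $\phi$. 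Then $\widetilde{G}\cong G'$ and $\deg_{\widetilde{G}}(v)=\deg_G(v)$ for every $v\in V(G)$.

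First I would apply the $2$-switch theorem to $G$ and $\widetilde{G}$ to obtain a sequence of $2$-switches, all supported on the vertex set $V(G)$, that converts $G$ into $\widetilde{G}$. The crucial observation is that each of these is also a legitimate $2$-switch performed \emph{inside} $U$: because $G$ is an \emph{induced} subgraph of $U$, two vertices of $V(G)$ are adjacent in $U$ exactly when they are adjacent in $G$, so the two edges to be deleted are present in $U$ and the two edges to be inserted are absent from $U$. Carrying out the same sequence of $2$-switches within $U$ therefore yields a graph $U'$ that agrees with $U$ on every pair of vertices not lying entirely in $V(G)$, that satisfies $\deg_{U'}=\deg_{U}$ (each $2$-switch preserves all degrees), and whose induced subgraph on $V(G)$ is precisely $\widetilde{G}\cong G'$.

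Finally, since $U$ is a unigraph and $U'$ realizes the same degree sequence as $U$, we get $U'\cong U$; in particular $U'$ is a unigraph, and it contains $G'$ as an induced subgraph, so $G'\in HCU$. I do not expect a serious obstacle here: the only point needing care is the remark that an induced subgraph inherits all of its non-adjacencies (not just its adjacencies) from the host graph, which is exactly what licenses replaying a $2$-switch of $G$ verbatim in $U$. One could alternatively route the proof through Theorem~\ref{prop:Characterization:characterization}, invoking that the indecomposable Tyshkevich components of a graph and their degree sequences are determined by the degree sequence, and then verifying that $\I=\U\cup\NN$ is closed under passing to another realization of a given degree sequence; but that verification ultimately reduces to the same $2$-switch observation applied componentwise (trivial for $\U$, and for $\NN$ amounting to transferring switches into a host graph in $\U$), so the direct argument above is the cleaner route.
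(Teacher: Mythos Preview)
Your proof is correct and follows essentially the same idea as the paper's: embed $G$ in a unigraph $U$, replace the induced copy of $G$ by a copy of $G'$ with matching vertex degrees, observe that the resulting graph $U'$ has the same degree sequence as $U$ and hence is isomorphic to $U$, so $G'$ is induced in a unigraph. The only difference is that the paper performs the swap in one stroke (delete the edges of $U$ with both endpoints in $V(G)$ and insert the edges of $G'$), whereas you route the same replacement through a sequence of $2$-switches; the $2$-switch theorem is unnecessary here, since the direct replacement already preserves every vertex degree.
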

\begin{proof}
    Suppose $G$ and $G'$ are graphs with the same degree sequence and $G \in HCU$. Then there exists a unigraph $U$ with $G$ induced in $U$. Replace the edges in $U$ that have both endpoints in $G$ with the edges in $G'$ so that vertex degrees are unchanged. The result is a graph $U'$ that contains $G'$ as an induced subgraph, and $U'$ has the same degree sequence as $U$. Since $U$ is a unigraph, it is isomorphic to $U'$ and thus $G' \in HCU.$
\end{proof}

The next result from Tyshkevich \cite{Ty00} shows that when two graphs have the same degree sequence, so do the corresponding components of their decomposition. 

\begin{theorem} \cite{Ty00}
\label{prop:degree_sequence_decomposition}
    Suppose $G$ is a graph with decomposition $G_n \circ \ldots \circ G_0$ and $H$ is a graph with decomposition $H_m \circ \ldots \circ H_0$. If $G$ and $H$ have the same degree sequence, then $n=m$ and the degree sequence of $G_i$ equals the degree sequence of $H_i$ for all $i,$ $0 \le i \le n$. 
\end{theorem}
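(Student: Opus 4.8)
## Proof Proposal for Theorem \ref{prop:degree_sequence_decomposition}

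\textbf{Proof proposal.} The plan is to show that the Tyshkevich decomposition can be \emph{read off from the degree sequence}: there is an explicit procedure which, given a graphic sequence $\pi$ on $N$ vertices, peels off the degree sequence of a leftmost indecomposable component and returns the degree sequence of ``the rest,'' iterating until nothing remains. Since this procedure consumes only $\pi$, it returns the same list of component degree sequences for $G$ and for $H$; combined with the existence and uniqueness of the decomposition (Theorem~\ref{prop:Introduction:decomposition}) this yields the claim. The computational input is a formula for degrees under composition: if $G = G_n \circ \cdots \circ G_0$ with $KS$-partitions $K_i \cup S_i$ on the split components $G_i$ ($i \ge 1$), then by associativity of $\circ$ and induction on $n$,
$$\deg_G(v) \;=\; \deg_{G_i}(v) \;+\; \sum_{j > i}|K_j| \;+\; [\,v \in K_i\,]\cdot \sum_{j < i}|V(G_j)|$$
for a vertex $v$ of component $G_i$, where $[\,v\in K_i\,]$ is $1$ when $i\ge 1$ and $v\in K_i$, and $0$ otherwise: every vertex acquires as neighbors precisely the clique vertices of all components to its left, and a clique vertex additionally acquires every vertex of every component to its right.

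The central step is a \emph{gap lemma}. Leftmost $K_1$ components correspond exactly to dominating vertices (the $K_1$-vertex lies in $K$) and isolated vertices (it lies in $S$), so an entry equal to $N-1$ or to $0$ in $\pi$ signals such a component and is stripped off first — in the dominating case decrementing the remaining entries by $1$. Assume then that $G = G_n \circ \cdots \circ G_0$ is the decomposition with $n \ge 1$ and $G$ has no dominating or isolated vertex, so $G_n$ has at least two vertices and hence (by Remark~\ref{prop:indecomposable_DIS}, together with the facts that a split graph with a clique vertex adjacent to all of its stable set, or with a stable vertex adjacent to all of its clique, is decomposable) has nonempty clique and stable parts $K_n$ and $S_n$. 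Writing $m = |V(G_{n-1}\circ\cdots\circ G_0)|$, the degree formula and these facts give
$$\deg_G(v) \ge m + |K_n|\ \ (v\in K_n), \qquad |K_n| \le \deg_G(v) \le m + |K_n| - 1\ \ (v\in V(G_{n-1}\circ\cdots\circ G_0)), \qquad 1 \le \deg_G(v) \le |K_n| - 1\ \ (v\in S_n).$$
Hence, read in non-increasing order, $\pi$ splits into three consecutive blocks of sizes $|K_n|$, $m$, $|S_n|$ separated by strict inequalities, and the degree sequence of $G_n$ (the top $|K_n|$ entries each decreased by $m$, together with the bottom $|S_n|$ entries) and of $G_{n-1}\circ\cdots\circ G_0$ (the $m$ middle entries each decreased by $|K_n|$) are recovered from $\pi$ and this block partition.

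The theorem then follows by induction on $N$. Given $G,H$ with common degree sequence $\pi$: if $\pi$ has an entry $0$ or $N-1$, both graphs have a leftmost $K_1$ component of the same type, and we recurse on $\pi$ with that entry removed (and, in the dominating case, the rest decremented). Otherwise neither graph has a dominating or isolated vertex; if $G$ is decomposable then, by the converse of the gap lemma (discussed next), so is $H$, its leftmost component has the same degree sequence as $G$'s, and we recurse on the degree sequence of ``the rest''; while if $G$ is indecomposable, applying the same fact with the roles of $G$ and $H$ interchanged shows $H$ is indecomposable as well, so $n = 0 = m$ for both. In every case $G$ and $H$ are assigned the same per-component degree sequences.

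The main obstacle is precisely this converse: that the conditions the gap lemma identifies are not only necessary but sufficient, so that the peeling procedure is well defined on $\pi$ alone and agrees with the decomposition of \emph{every} realization. Two subtleties arise. First, the gap inequalities $d_a \ge N-s > d_{a+1}$ and $d_{N-s} \ge a > d_{N-s+1}$ can hold for degree sequences all of whose realizations are indecomposable — for example $(4,4,3,2,1,1,1)$ — so the procedure must additionally require that the extracted ``leftmost-component'' sequence be realizable by an \emph{indecomposable} split graph with the prescribed clique and stable parts and that the extracted ``rest'' sequence be graphic; proving that these supplementary conditions together with the gap inequalities do force every realization of $\pi$ to decompose as predicted is the technical heart, and is itself recursive in flavor. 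Second, one must verify that the block partition so singled out is the unique one meeting these conditions, which comes from a short monotonicity argument on the sorted sequence $\pi$. Degenerate $K_1$ components lying strictly between larger components, and the dependence of every strict inequality above on the absence of dominating, isolated, or swing vertices in \emph{each} indecomposable component (not merely in $G$), account for the bulk of the remaining case analysis.
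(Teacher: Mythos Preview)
The paper does not prove this theorem; it is quoted from Tyshkevich~\cite{Ty00}. Your outline---compute degrees under composition, establish a gap lemma separating $K_n$, the middle, and $S_n$ by degree, then peel off the leftmost component and recurse---is the standard route, and your degree formula and gap inequalities are correct.

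Where you go astray is in assessing the ``converse.'' You frame it as requiring auxiliary realizability conditions on the extracted pieces and a uniqueness argument for the block partition, and call it the technical heart. It is much cleaner than that. You never need a procedure that operates on $\pi$ alone: you are \emph{given} $G$'s decomposition, so you already know $k=|K_n|$. The point is that $G$'s structure forces the $k$th Erd\H{o}s--Gallai inequality to hold with \emph{equality} for $\pi$ (and your gap inequalities already give $d_k>d_{k+1}$). Now let $H$ be any realization of $\pi$ and let $A$ be its $k$ vertices of highest degree. A two-line double count gives
\[
\sum_{i\le k} d_i \;=\; 2\,e(A)+e(A,\overline A)\;\le\; k(k-1)+\sum_{i>k}\min(k,d_i),
\]
and equality forces $A$ to be a clique and each $w\notin A$ to have exactly $\min(k,d_w)$ neighbors in $A$. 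With your gap bounds, every middle vertex ($d_w\ge k$) is then joined to all of $A$, and every bottom vertex ($d_w\le k-1$) has all its neighbors in $A$; hence $H=(H[A\cup C],A,C)\circ H[M]$, with $H[A\cup C]$ and $H[M]$ having the same degree sequences as $G_n$ and $G_{n-1}\circ\cdots\circ G_0$ respectively. Induction on $|V|$ (applied to each piece) finishes. Your example $(4,4,3,2,1,1,1)$ correctly shows that the gap pattern alone does not imply decomposability, but this is moot: it is the Erd\H{o}s--Gallai equality, not the gaps, that transfers structure from $G$ to $H$, and that equality is automatic once $G$ decomposes. This is exactly the mechanism behind Lemma~\ref{prop:EG_nonsplit_useful} (from~\cite{Ba13}) later in the paper.
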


Following Tyshkevich, Theorem \ref{prop:degree_sequence_decomposition} allows us to define the decomposition of a graphic sequence irrespective of the choice of realization, and to define decomposable and indecomposable degree sequences. 

\begin{definition}
\label{def:degree_seq_decomposition}
    {\rm Let $\pi$ be a graphic sequence, let $G$ be a graph realizing $\pi$, and let $G_n \circ \ldots \circ G_0$ be the decomposition of $G$. Define the} (Tyshkevich) decomposition of $\pi$ {\rm to be $\pi^n \circ \ldots \circ \pi^0$ where $\pi^i$ is the degree sequence of $G^i$ for each $i$. If $n = 0$ then $\pi$ is} indecomposable {\rm and if $n \ge 1$ then $\pi$ is} decomposable.
\end{definition}

Furthermore, we can discern from the decomposition of a degree sequence whether its realizations are in $HCU$.

\begin{lemma} 
\label{prop:HCU_closed_degree_components}
    Let $\pi$ be a degree sequence with decomposition $\pi^n \circ \ldots \circ \pi^0$. Each realization of $\pi$ is in $HCU$ if and only if each of the components $\pi^i$ is realized by a graph in $HCU$. 
\end{lemma}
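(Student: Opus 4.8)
The plan is to reduce the claim about realizations of $\pi$ to the structural characterization of $HCU$ from Theorem \ref{prop:Characterization:characterization}, using the fact (Lemma \ref{prop:HCU_closed_degree}) that membership in $HCU$ depends only on the degree sequence. First I would fix a realization $G$ of $\pi$ with decomposition $G = G_n \circ \ldots \circ G_0$; by Definition \ref{def:degree_seq_decomposition}, the degree sequence of $G_i$ is exactly $\pi^i$. By Lemma \ref{prop:HCU_closed_degree}, ``each realization of $\pi$ is in $HCU$'' is equivalent to ``$G \in HCU$,'' and similarly ``$\pi^i$ is realized by a graph in $HCU$'' is equivalent to ``$G_i \in HCU$'' (indeed to ``every realization of $\pi^i$ is in $HCU$''). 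So it suffices to prove: $G \in HCU$ if and only if each $G_i \in HCU$.

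For the forward direction, suppose $G \in HCU$. Each $G_i$ is an induced subgraph of $G$ (take the vertices of the component $G_i$), and since $HCU$ is hereditary by definition, $G_i \in HCU$. For the reverse direction, suppose each $G_i \in HCU$. The decomposition $G = G_n \circ \ldots \circ G_0$ is into indecomposable components, but the $G_i$ themselves need not be indecomposable; however, Theorem \ref{prop:Characterization:characterization} says $G \in HCU$ if and only if every graph in the decomposition of $G$ lies in $\I$. So refine each $G_i$ into its own decomposition $G_i = G_{i,m_i} \circ \ldots \circ G_{i,0}$ into indecomposable parts; by associativity of $\circ$ (Theorem \ref{prop:Introduction:decomposition}), concatenating these gives the decomposition of $G$ into indecomposables. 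Since $G_i \in HCU$, Theorem \ref{prop:Characterization:characterization} applied to $G_i$ tells us each $G_{i,j} \in \I$. Hence every component in the decomposition of $G$ is in $\I$, and Theorem \ref{prop:Characterization:characterization} gives $G \in HCU$.

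The one point requiring a little care — and the closest thing to an obstacle — is the translation between ``some realization of $\pi^i$ is in $HCU$,'' ``every realization of $\pi^i$ is in $HCU$,'' and ``$G_i \in HCU$.'' These are all equivalent by Lemma \ref{prop:HCU_closed_degree} (two graphs with the same degree sequence are both in or both out of $HCU$), so the phrasing in the statement (``realized by a graph in $HCU$'') matches, but I would spell this equivalence out so the logic is clean. Everything else is routine: the forward direction is heredity, and the reverse direction is just bookkeeping with associativity of Tyshkevich composition together with the already-proved Theorem \ref{prop:Characterization:characterization}.
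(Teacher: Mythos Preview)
Your approach is essentially the paper's: fix a realization $G$, use heredity for the forward direction, and for the reverse use Lemma~\ref{prop:HCU_closed_degree} to pass from ``some realization of $\pi^i$ in $HCU$'' to ``$G_i \in HCU$,'' then invoke Theorem~\ref{prop:Characterization:characterization}. The one wrinkle is your sentence ``The decomposition $G = G_n \circ \ldots \circ G_0$ is into indecomposable components, but the $G_i$ themselves need not be indecomposable''---these two clauses contradict each other, and in fact the $G_i$ \emph{are} indecomposable by definition of the Tyshkevich decomposition (Theorem~\ref{prop:Introduction:decomposition}). Your subsequent refinement step is therefore unnecessary (each $G_i$ decomposes trivially into itself), and the paper simply observes that $G_i \in HCU$ together with $G_i$ indecomposable gives $G_i \in \I$ directly. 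This doesn't break your argument, but you should delete the contradictory clause and the redundant refinement.
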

\begin{proof}
    Suppose $\pi$ is a graphic sequence with realization $G$. Let $G_n \circ \ldots \circ G_0$ and $\pi^n \circ \ldots \circ \pi^0$ be the decompositions of $G$ and $\pi$, respectively. By Definition \ref{def:degree_seq_decomposition}, $\pi^i$ is the degree sequence of $G_i$ for each $i$. For the forward direction, assume that $G \in HCU$. Since $HCU$ is a hereditary class and $G_i$ is induced in $G$, it follows that $G_i \in HCU$ for each $i$. 
    Conversely, suppose each $\pi^i$ is realized by a graph $H_i \in HCU$. Since $G_i$ and $H_i$ have the same degree sequence for each $i$ and $H_i \in HCU$, we know $G_i \in HCU$ by Lemma \ref{prop:HCU_closed_degree}. Each $G_i$ is indecomposable, so $G_i \in \I$ by definition of $\I$. Now Theorem \ref{prop:Characterization:characterization} implies $G \in HCU$ as desired. 
\end{proof}

The indecomposable components of a degree sequence can be efficiently determined from the degree sequence itself (see~\cite{Ty00} and also~\cite{Ba13}). For convenience we record here a helpful fact about Tyshkevich decomposition from~\cite{Ba13}.

\begin{lemma}[See~{\cite[Observation 5.2]{Ba13}} and {\cite[Theorem 5.6]{Ba13}}]
\label{prop:EG_nonsplit_useful}
    Let $\pi = (d_1, \ldots, d_n)$ be a graphic sequence with $t=\max\{i|d_i \ge i-1\}$. Then $\pi$ is decomposable if and only if $d_n=0$, $d_n = n-1$, or there exists $k<t$ with $d_k > d_{k+1}$ such that the $k$th Erd\H{o}s-Gallai inequality holds with equality. 
\end{lemma}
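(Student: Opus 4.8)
The plan is to prove the stated equivalence by relating Tyshkevich-decomposability to the classical Erd\H{o}s--Gallai characterization of graphic sequences, via the parameter $t = \max\{i \mid d_i \ge i-1\}$, which identifies the ``split-ish'' prefix of $\pi$. First I would recall that $\pi$ is decomposable precisely when some realization $G$ has a dominating vertex (degree $n-1$), an isolated vertex (degree $0$), or a swing vertex, or when $G$ decomposes nontrivially at some internal split component; since decomposability is a property of the degree sequence, it suffices to locate a witness in one convenient realization. The cases $d_n = 0$ and $d_n = n-1$ are immediate: an isolated vertex gives the leftmost component $(K=\emptyset, S=\{v\})$, and a dominating vertex gives the leftmost component with the opposite $KS$-partition. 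So the substantive content is the claim that, modulo these two degenerate cases, $\pi$ is decomposable if and only if there is an index $k < t$ with $d_k > d_{k+1}$ at which the $k$th Erd\H{o}s--Gallai inequality
\[
\sum_{i=1}^{k} d_i \;\le\; k(k-1) + \sum_{i=k+1}^{n} \min\{d_i, k\}
\]
holds with equality.

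The key conceptual step is the standard fact that equality in the $k$th Erd\H{o}s--Gallai inequality (together with $k \le t$, so that the first $k$ vertices can form a clique) forces a rigid structure in \emph{every} realization: the $k$ highest-degree vertices form a clique, each of them is adjacent to exactly $\min\{d_i,k\}$ of the remaining vertices in a ``nested'' (threshold-like) pattern, and, crucially, there are no edges from $\{v_1,\dots,v_k\}$ to any vertex $v_i$ ($i>k$) with $d_i > k$ unless forced --- more precisely, the cut between the top $k$ vertices and the rest is saturated in a way that exhibits a Tyshkevich split component with clique among $v_1,\dots,v_k$. One then checks that the condition $d_k > d_{k+1}$ is exactly what guarantees the split piece is a genuine prefix component (the boundary falls strictly between degree classes, so the decomposition cut is well-defined and nontrivial), yielding a composition $G = G' \circ G''$ with $G'$ a nonempty split graph. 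This gives the ``if'' direction. For ``only if,'' I would argue contrapositively: if $d_n \notin \{0, n-1\}$ and no such $k$ exists, then for every candidate cut position $k < t$ with a strict degree drop, the EG inequality is strict, which lets one build (by a careful edge-swap / Havel--Hakimi-style argument, or by invoking the symmetric difference of two realizations) a realization with no dominating, isolated, or swing vertex and no nontrivial split-component cut --- i.e., an indecomposable realization --- so $\pi$ is indecomposable. Here I would lean on the earlier results in the excerpt (Remark~\ref{prop:indecomposable_DIS} and the surrounding discussion) to translate ``no dominating/isolated/swing vertex and no internal split cut'' into genuine indecomposability.

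The main obstacle I anticipate is making the second step --- the correspondence between an EG equality at a strict-drop index $k < t$ and an actual Tyshkevich split component --- fully rigorous in both directions simultaneously. The ``forced structure from EG equality'' lemma is classical but its precise form (which edges are forced present, which forced absent, and why the forcedness is uniform across all realizations) needs to be stated carefully, and one must verify that the forced bipartite adjacency pattern between the top-$k$ clique and the tail is precisely ``every tail vertex is either joined to all of $v_1,\dots,v_k$ or to none,'' which is what the Tyshkevich composition $G' \circ G''$ demands. The role of $t$ (ensuring $d_k \ge k-1$ so that a clique on the top $k$ vertices is even possible, and ensuring we don't look past the split part into a genuinely non-split tail) and the role of the strict inequality $d_k > d_{k+1}$ (ruling out a ``false'' cut in the middle of a degree class, where EG can hold with equality without producing a component boundary) both need to be handled explicitly. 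Since this is essentially a restatement of \cite[Observation 5.2]{Ba13} and \cite[Theorem 5.6]{Ba13}, for the purposes of this paper I would keep the argument brief: cite those results for the structural lemma, and supply only the short verification that their hypotheses match the statement here, including the two degenerate cases $d_n = 0$ and $d_n = n-1$.
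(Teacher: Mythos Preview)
The paper does not prove this lemma at all; it is stated with attribution to \cite[Observation 5.2]{Ba13} and \cite[Theorem 5.6]{Ba13} and then used as a black box. Your final paragraph, where you propose to cite those results and only verify that the hypotheses line up, is therefore exactly what the paper does.

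That said, the direct-proof sketch you give before that contains a structural misconception worth correcting. You write that Erd\H{o}s--Gallai equality at index $k$ forces ``every tail vertex is either joined to all of $v_1,\dots,v_k$ or to none,'' and that this is what Tyshkevich composition demands. Neither half of that is right. What EG equality at $k$ actually forces in every realization is: the top $k$ vertices form a clique; each tail vertex $v_i$ with $d_i \ge k$ is adjacent to \emph{all} of $v_1,\dots,v_k$; and each tail vertex $v_i$ with $d_i < k$ has \emph{all of its neighbors} among $v_1,\dots,v_k$ (but may be adjacent to any subset of them, not ``none''). The resulting Tyshkevich split is then $G = (G[K \cup S],K,S) \circ G[A]$, where $K=\{v_1,\dots,v_k\}$, $S = \{v_i : i>k,\ d_i < k\}$, and $A = \{v_i : i>k,\ d_i \ge k\}$; the low-degree tail vertices become the stable-set part of the split component, not part of $G_0$. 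The hypothesis $k<t$ is precisely what guarantees $A \neq \emptyset$ (since $d_{k+1} \ge d_t \ge t-1 \ge k$), so the decomposition is nontrivial; the hypothesis $d_k > d_{k+1}$ ensures that no vertex of $A$ has the same degree as a vertex of $K$, so the cut genuinely falls between degree classes. With this corrected picture the ``if'' direction goes through cleanly, and the ``only if'' direction is the reverse computation: given a composition $(G_1,K,S)\circ G_0$, set $k=|K|$ and verify EG equality there. Your contrapositive edge-swap argument for ``only if'' is unnecessary and would be harder to make rigorous than the direct computation.
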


We characterize when indecomposable sequences are realized by graphs in $HCU$ separately for split and non-split sequences, beginning with the latter. Theorems \ref{prop:degree_characterization_nonsplit} and \ref{prop:degree_characterization_split} both show that the degree sequences of graphs in $HCU$ are extreme cases: for instance, degree sequences satisfying (1) of the following theorem have all but one term equal to $1$, the smallest possible value in an indecomposable sequence. 

\begin{theorem} 
\label{prop:degree_characterization_nonsplit}
  All realizations of a non-split, graphic sequence $\pi=(d_1, \ldots, d_n)$ are indecomposable and in $HCU$ if and only if $d_n \ge 1, d_1 \le n-2$, and at least one of the following holds: 
    \begin{multicols}{2}
    \begin{enumerate}
        \item [\rm{(1)}] $d_2 = 1$
        \item [\rm{(2)}] $d_{n-1}=n-2$
        \item [\rm{(3)}] $d_1 = n-2$ and $d_2 = 2$
        \item [\rm{(4)}] $d_n = 1$ and $d_{n-1} = n-3$
        \item [\rm{(5)}] $d = (2,2,2,2,2)$
    \end{enumerate} 
    \end{multicols}
\end{theorem}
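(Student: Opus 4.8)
The plan is to reduce the statement, through Theorem~\ref{prop:Characterization:characterization} and Lemma~\ref{prop:HCU_closed_degree}, to a claim about induced subgraphs of the non-split indecomposable unigraphs from Theorem~\ref{prop:indecomposable_unigraphs}, and then translate that claim into conditions (1)--(5). Since $\pi$ is non-split, every realization is non-split, and by Lemma~\ref{prop:HCU_closed_degree} either all or no realizations of $\pi$ belong to $HCU$; also every realization is indecomposable exactly when $\pi$ is (Theorem~\ref{prop:degree_sequence_decomposition}). By Theorem~\ref{prop:Characterization:characterization} an indecomposable graph lies in $HCU$ if and only if it lies in $\I$, and the graphs in $\I$ are indecomposable. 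So it suffices to show: all realizations of $\pi$ are indecomposable and in $HCU$ if and only if some realization lies in $\I$; this happens if and only if one of (1)--(5) holds; and in that case $d_n\ge 1$ and $d_1\le n-2$, since no realization may have an isolated or dominating vertex. Finally, Lemma~\ref{prop:induced_indecomposable}, Theorem~\ref{prop:indecomposable_unigraphs}, and the fact that split graphs form a hereditary class together show that a non-split indecomposable graph $G$ lies in $\I$ if and only if $G$ or $\overline{G}$ is an induced subgraph of $C_5$ or of a member of $\mathcal{T}_1\cup\mathcal{T}_2\cup\mathcal{T}_3$; since $\I$ and $HCU$ are closed under complementation (Corollary~\ref{prop:HCU_closed_complements} and Remark~\ref{prop:indecomposable_complement_inverse}), I may reduce complementary cases to each other.

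For the direction where one of (1)--(5) holds, I match each condition to a family: (1) to $\mathcal{T}_1\cup\mathcal{T}_2$, (3) to $\mathcal{T}_3$, (5) to $C_5$, and (2), (4) to the complements of (1), (3). If $d_2=1$, the unique realization of $\pi$ is $K_{1,d_1}+mK_2$ with $m=(n-1-d_1)/2$; non-splitness forces $m\ge 1$, so this graph is a member of $\mathcal{T}_1\cup\mathcal{T}_2$ and hence of $\I$. If $\pi=(2,2,2,2,2)$, the only realization is $C_5\in\I$. If $d_1=n-2$ and $d_2=2$, let $c$ be a vertex of degree $n-2$ and $b$ its unique non-neighbor; I build a realization according to $\deg b\in\{1,2\}$: if $\deg b=2$, then $c$ and $b$ are forced into a $C_4$, the remaining neighbors of $c$ of degree $2$ pair into triangles at $c$, and those of degree $1$ become pendants at $c$; if $\deg b=1$, one obtains a path through $b$ and $c$ with triangles and pendants at $c$, and non-splitness guarantees at least one triangle. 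In either case the result is a non-split indecomposable induced subgraph of a sufficiently large $T_3(m)$, hence in $\I$ (invoking Theorem~\ref{prop:indecomposable_nonunigraphs}(1)--(2) when it is not already a unigraph). Conditions (2) and (4) follow by passing to $\overline{\pi}$, which is non-split with $\overline{d}_n\ge 1$ and $\overline{d}_1\le n-2$ and satisfies (1), respectively (3).

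For the reverse direction, take any realization $G$; it is non-split and indecomposable, so by the reduction $G$ or $\overline{G}$ is induced in $C_5$, in a member of $\mathcal{T}_1\cup\mathcal{T}_2$, or in some $T_3(m)$, and by complementation I may assume it is $G$ itself. If $G$ is induced in $C_5$, the only non-split indecomposable possibility is $C_5$, giving (5). If $G$ is induced in a member of $\mathcal{T}_1\cup\mathcal{T}_2$, then $G$ is a disjoint union of copies of $K_2$ and at most one star, and having no isolated vertex forces $d_2=1$, which is (1). If $G$ is induced in some $T_3(m)$ and the center $c$ of $T_3(m)$ is not a vertex of $G$, then $G$ is induced in a member of $\mathcal{T}_2$ and we are in the previous case; if $c\in V(G)$, then indecomposability forbids $c$ from being dominating, so the unique non-neighbor $b$ of $c$ in $T_3(m)$ lies in $V(G)$, whence $\deg_G c=n-2$ and $d_1=n-2$, and since every other vertex of $T_3(m)$ has degree at most $2$ we get $d_2\le 2$, with $d_2=1$ ruled out because it would produce a second non-neighbor of $c$; hence $d_2=2$ and (3) holds. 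The complementary cases give (2) and (4).

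The step I expect to be the main obstacle is the $\mathcal{T}_3$ analysis: determining exactly which induced subgraphs of $T_3(m)$ are non-split and indecomposable, realizing an arbitrary degree sequence satisfying (3) inside such a subgraph, and keeping the decomposability and complementation bookkeeping consistent. By contrast, the $C_5$ and $\mathcal{T}_1\cup\mathcal{T}_2$ cases are routine.
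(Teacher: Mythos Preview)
Your proposal is correct and follows essentially the same approach as the paper: both directions reduce to the list of non-split indecomposable unigraphs in Theorem~\ref{prop:indecomposable_unigraphs} via Lemma~\ref{prop:induced_indecomposable}, then do a case analysis matching $C_5$ to (5), $\mathcal{T}_1\cup\mathcal{T}_2$ to (1), $\mathcal{T}_3$ to (3), and complements to (2) and (4). The only noteworthy differences are organizational: you package the sufficiency direction as ``some realization lies in $\I$'' and then transfer to all realizations via Lemma~\ref{prop:HCU_closed_degree} and Theorem~\ref{prop:degree_sequence_decomposition}, whereas the paper takes an arbitrary realization directly; and for indecomposability in case~(3) you invoke Theorem~\ref{prop:indecomposable_nonunigraphs}, whereas the paper checks the Erd\H{o}s--Gallai criterion of Lemma~\ref{prop:EG_nonsplit_useful}. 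One small wording slip: in the forward $\mathcal{T}_3$ analysis, ``$d_2=1$ would produce a second non-neighbor of $c$'' is not the right reason---the actual obstruction is that the neighbor of your $b$ must also be adjacent to $c$ and hence have degree $2$ (equivalently, $d_1=n-2$ with $d_2=1$ gives an odd degree sum).
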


\begin{proof}
For the forward direction, let $G$ be an indecomposable non-split graph in $HCU$ with degree sequence $\pi = (d_1, \ldots, d_n)$. Since $G$ is indecomposable, it has no dominating or isolated vertices, so $d_n \ge 1$ and $d_1 \le n-2$. By Lemma \ref{prop:induced_indecomposable}, the graph $G$ is induced in an indecomposable unigraph $U$, and since $G$ is a non-split graph, so is $U$. The indecomposable non-split unigraphs are characterized in Theorem \ref{prop:indecomposable_nonunigraphs} and depicted in Figure \ref{fig:NonSplit}. We consider each possibility for $U$. If $U = C_5$, then the only possibility for $G$ is $C_5$ itself, and $\pi$ satisfies (5). It is not hard to verify that if $G$ is induced in a graph in ${\mathcal{T}}_1$ or ${\mathcal{T}}_2$ then $\pi$ satisfies (1), and if $G$ is induced in their complements, then $\pi$ satisfies (2). We can similarly see that if $G$ is induced in a graph in ${\mathcal{T}}_3$ or its complement, for some $m \ge 1$, then $\pi$ satisfies (3) or (4), respectively.

For the backwards direction, suppose $\pi = (d_1, \ldots, d_n)$ is a non-split, graphic sequence with $d_n \ge 1$, $d_1 \le n-2$, and satisfying at least one of conditions (1) to (5). 
Let $G$ be a graph realizing $\pi$ with $V(G)=\{v_1, \ldots, v_n\}$ and $\deg{v_i}=d_i$ for all $i$,  $i \le n$. We will show that $G$ is an indecomposable graph in $HCU$. 

First suppose $\pi$ satisfies (1), so $\pi = (d_1, 1, 1, \ldots, 1)$. If $d_1 = 1$, then $n$ is even and $G$ is isomorphic to $\frac{n}{2}K_2$. If $d_1 \ge 2$, then $G$ is isomorphic to $T_2(\frac{n-1-d_1}{2}, d_1)$, where $n-1-d_1$ is an even integer (because $\pi$ is graphic) and at least two (because $G$ is not a split graph). In either case, $G$ is an indecomposable unigraph by Theorem \ref{prop:indecomposable_unigraphs}. 

Next, suppose $\pi$ satisfies (2), so $\pi = (n-2, \ldots, n-2, d_n)$. In this case, the degree sequence of $\overline{G}$ satisfies (1), so $\overline{G}$ is an indecomposable unigraph by the above. By Remarks \ref{prop:unigraph_complement_inverse} and \ref{prop:indecomposable_complement_inverse}, $G$ is an indecomposable unigraph. 

Now, suppose $\pi$ satisfies (3), so $\pi = (n-2, 2, \ldots, 2, 1, \ldots, 1)$. Let $a$ be the number of terms in $\pi$ equal to $1$. We will show that $G$ is induced in the unigraph $U=T_3(\frac{n+a-4}{2})$, and it may be helpful to refer to Figure \ref{fig:NonSplit}. Since $d_1 = n-2$, it follows that $v_1$ (labeled $c$ in Figure \ref{fig:NonSplit}) is adjacent to all other vertices except one, $v_s$ (labeled $a$). If $\deg{v_s}=2$, then $v_s$, its neighbors, and $v_1$ induce $C_4$. Otherwise, $\deg{v_s}=1$, and then $v_s$, $v_1$, and their mutual neighbor induce $P_3$. In either case, the remaining degree two vertices must each be adjacent to $v_1$, so in pairs they form induced $C_3$ graphs with $v_1$ in $G$. The vertices of degree $1$ (other than $v_s$) must be adjacent to $v_1$ and for each degree $1$ vertex $v_j$, add a new vertex $w_j$ of degree $2$ adjacent to $v_1$ and $v_j$. If $\deg(v_s) = 1$, then $w_s$ completes an induced $C_4$ and in either case, the remaining $w_j$ form induced $C_3$ graphs with $v_1$ and $v_j$. The result is the unigraph $U$ containing $G$ as an induced subgraph, so $G \in HCU$. 

It remains to show $G$ is indecomposable when $\pi$ satisfies (2). If $d_3 = 1$ then $t=\max\{i|d_i \ge i-1\} = 2$ and if $d_3 = 2$ then $t = 3$. In either case, the only index less than $t$ for which $d_k > d_{k+1}$ is $k = 1$. However, $n-2 = d_1 < 1(0) + \sum_{i=2}^n \min\{1, d_i\} = n-1$, so the $1$st Erd\H{o}s-Gallai inequality holds strictly. By assumption, $1 \le d_n \le n-2$, and we conclude from Lemma \ref{prop:EG_nonsplit_useful} and Remark \ref{prop:indecomposable_DIS} that $d$ is indecomposable, implying $G$ is as well. 

When $\pi$ satisfies (4) the degree sequence of $\overline{G}$ satisfies (3), so as before, $G$ is an indecomposable graph in $HCU$. Finally, when $\pi$ satisfies (5), then $G = C_5$, which is an indecomposable unigraph. 
\end{proof}

We now turn to the characterization of degree sequences of split graphs in $HCU$. For split graphs with a fixed $KS$-partition, it will be helpful to consider only the edges with one endpoint in $K$ and the other in $S$, and for each vertex, the number of such edges incident to it. 

\begin{definition}
\label{def:cross_degree}
    {\rm If $G$ is a split graph with a fixed $KS$-partition, the} cross-degree \rm{of vertex $v$, denoted by $\cross(v)$, is defined as follows:
    
        If $v \in K$ then $\cross(v) = \deg(v) - (|K|-1).$
        
        If $v \in S$ then $\cross(v) = \deg(v)$.}
\end{definition}

For example, the split graph $P_4$ has a unique $KS$-partition and each vertex has cross-degree $1$. Generally, the vertices of any split graph with no swing vertices will have unique cross-degrees. For split graphs with a swing vertex, the cross-degrees depend on the $KS$-partition. We introduce the cross-degree sequence pair of a split graph, which we define with respect to a given $KS$-partition of the graph. 

\begin{definition}
\label{def:cross_degree_sequence}
    {\rm Suppose $G$ is a split graph and fix a $KS$-partition of $G$. Let $K = \{v_1, \ldots, v_n\}$ with $\cross(v_i) \ge \cross(v_{i+1})$ for all $i \le n-1$, and let $S = \{w_1, \ldots, w_m\}$ with $\cross(w_i) \ge \cross(w_{i+1})$ for all $i \le m-1$. Let $a_i = \cross(v_i)$ and $b_j = \cross(w_j)$ for all $i \le n$ and $j \le m$. The} cross-degree sequence pair {\rm of $G$ with the given $KS$-partition is given by the unordered pair $\crosspair$ with $\alpha = (a_1, \ldots, a_n)$ and $\beta = (b_1, \ldots, b_m)\}$.}
\end{definition}

For example, the cross-degree sequence pair of $P_4$ is $\{(1,1),(1,1)\}$. Using Definition \ref{def:cross_degree}, we can easily convert between cross-degree sequence pairs and split graphic (degree) sequences, although this correspondence is not generally bijective. The most useful property of the cross-degree of a vertex and the cross-degree sequence pair of a split graph is that these are invariant under inversion, with respect to a given $KS$-partition. As a result, graphs with a unique $KS$-partition, including all indecomposable split graphs, have a unique cross-degree sequence pair. By conversion to degree sequences, it is straightforward to show that split graphs in $HCU$ can be characterized by their cross-degree sequence pairs, and to give well-defined notions of decomposition and indecomposability. 

Lemma \ref{prop:EG_nonsplit_useful} is also easily translated to cross-degree sequence pairs by using a second inequality, proven independently by Gale \cite{Ga57} and Ryser \cite{Ry57}, in lieu of the Erd\H{o}s-Gallai inequality. We leave the proof to the reader. 

\begin{lemma}
\label{prop:cross_degree_GaleRyser}
    A cross-degree sequence pair $\crosspair=\{(a_1, \ldots, a_n),(b_1, \ldots, b_m)\}$ is decomposable if and only if $a_n = 0, a_n = m$, or there exists $k$ such that $a_k > a_{k+1}$ and $\crosspair$ satisfies with equality the $k$th Gale-Ryser inequality $$\sum_{i=1}^k a_i \le \sum_{i=1}^m \min\{k,b_i\}.$$
    Moreover, since $\crosspair$ is unordered, $\crosspair$ is decomposable if and only if $b_m = 0$, $b_m = n$, or there exists $k$ such that $b_k > b_{k+1}$ and $\sum_{i=1}^k b_i \le \sum_{i=1}^n \min\{k,a_i\}$ holds with equality.
\end{lemma}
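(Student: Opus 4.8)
The plan is to reduce the statement to Lemma~\ref{prop:EG_nonsplit_useful} via the correspondence between split graphs and their cross-degree sequence pairs. First I would fix a cross-degree sequence pair $\crosspair = \{(a_1,\ldots,a_n),(b_1,\ldots,b_m)\}$ and let $G$ be a split graph realizing it, with $KS$-partition $K = \{v_1,\ldots,v_n\}$ (clique) and $S = \{w_1,\ldots,w_m\}$ (stable set), ordered so that $\cdeg(v_i) = a_i$ and $\cdeg(w_j) = b_j$. Using Definition~\ref{def:cross_degree} the degree sequence $\pi$ of $G$ is obtained by adding $|K|-1 = n-1$ to each clique cross-degree; the clique vertices are the $n$ largest entries of $\pi$ (since each has degree at least $n-1$, while each stable vertex has degree at most $m \le$ the clique number minus one in the relevant range — here one must be slightly careful about swing vertices, but the pair being decomposable in the ``$a_n=m$'' sense is exactly the swing-vertex case, so outside that case the split is clean). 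The key dictionary entries are: $a_n = 0$ corresponds to $v_n$ being isolated-after-we-look-at-cross-edges, i.e. $v_n$ has degree exactly $n-1$ in $G$, which means $v_n$ is a dominating vertex (adjacent to all of $K$ and nothing in $S$) — wait, more precisely it is adjacent to every other vertex of $G$ iff it is adjacent to all of $S$; $a_n=0$ makes it adjacent to none of $S$, so its degree is $n-1$. Whether that is dominating depends on $|S|$; the honest statement is that $a_n = 0$ or $a_n = m$ are precisely the conditions under which the associated degree sequence has $d_n = 0$, $d_n = $ (length)$-1$, or a swing vertex. I would spell this correspondence out carefully.

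Next, for the main case, I would invoke the Gale--Ryser theorem: a pair $\crosspair$ of partitions is realizable as the cross-degrees of a bipartite-type incidence (equivalently, $\bbb$ is dominated by the conjugate of $\aaa$) and, more to the point here, the $k$th Gale--Ryser inequality $\sum_{i=1}^k a_i \le \sum_{i=1}^m \min\{k,b_i\}$ is the exact analogue of the $k$th Erd\H{o}s--Gallai inequality once one translates degrees to cross-degrees. Concretely, I would show that the $k$th Erd\H{o}s--Gallai inequality for $\pi$, restricted to indices $k \le n$ (where $d_k \ge k-1$, so $k < t$ is possible), is algebraically equivalent — after subtracting the contribution $k(k-1)$ coming from the clique edges among $v_1,\ldots,v_k$ and the terms $\min\{k-1, d_i - ?\}$ — to the $k$th Gale--Ryser inequality for $\crosspair$. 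This is the computational heart: carefully matching $\sum_{i=1}^k d_i = \sum_{i=1}^k a_i + k(n-1)$ against $k(k-1) + \sum_{i=k+1}^{n+m}\min\{k, d_i\}$ and simplifying the right side using that the remaining clique vertices contribute $\min\{k, a_i + n - 1\} = k$ (for $k \le n-1$) while stable vertices contribute $\min\{k, b_i\}$. Equality in one inequality then corresponds to equality in the other, and the condition $d_k > d_{k+1}$ translates to $a_k > a_{k+1}$ (when $v_{k+1}$ is still a clique vertex) or is subsumed by the $a_n = m$ / swing case. I would also need to check that the value $t = \max\{i : d_i \ge i-1\}$ always satisfies $t \ge n$ unless a swing vertex is present, so the range of relevant $k$ matches.

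Then I would assemble the equivalence: by Lemma~\ref{prop:EG_nonsplit_useful}, $\pi$ is decomposable iff $d_n = 0$, $d_n = (\text{length})-1$, or some $k < t$ with $d_k > d_{k+1}$ gives equality in the $k$th Erd\H{o}s--Gallai inequality; via the dictionary above this is iff $a_n = 0$, $a_n = m$, or some $k$ with $a_k > a_{k+1}$ gives equality in the $k$th Gale--Ryser inequality. Since decomposability of the cross-degree sequence pair was \emph{defined} to agree with decomposability of (any realization of) the corresponding degree sequence, and the whole setup is symmetric in the two parts of the pair (complementing the clique versus stable roles, or equivalently passing to $\overline{\GI}$, swaps $\aaa$ and $\bbb$), the ``moreover'' sentence follows by applying the first half to the pair $\{\bbb,\aaa\}$.

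The main obstacle I expect is the bookkeeping around swing vertices and the boundary index $k = n$ vs. $k = n-1$: the clean ``clique vertices are the top $n$ degrees'' picture fails exactly when $G$ has a swing vertex (degree equal to the clique number), and that is precisely the $a_n = m$ alternative. I would handle this by treating the swing/non-swing dichotomy up front — if $a_n = m$ we are immediately in the decomposable case on both sides and done, and otherwise the $KS$-partition is unique, the degree-to-cross-degree dictionary is unambiguous, and the Erd\H{o}s--Gallai-to-Gale--Ryser translation goes through without edge-case trouble. A secondary nuisance is making sure the index sets in the two inequalities line up after the substitution $d_i = a_i + n - 1$ for $i \le n$; this is routine but must be done with care so that no term is double-counted or dropped. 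Since the paper explicitly says ``we leave the proof to the reader,'' a proof proposal at this level of detail — reduce to Gale--Ryser, match it term-by-term to Erd\H{o}s--Gallai via the cross-degree dictionary, dispatch the swing case separately, and use the pair's symmetry for the ``moreover'' — is exactly what is wanted.
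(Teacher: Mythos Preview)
Your proposal is correct and follows exactly the route the paper intends: the paper explicitly says this lemma ``is also easily translated to cross-degree sequence pairs by using a second inequality, proven independently by Gale and Ryser, in lieu of the Erd\H{o}s--Gallai inequality'' and leaves the proof to the reader, so the reduction to Lemma~\ref{prop:EG_nonsplit_useful} via the degree/cross-degree dictionary, the term-by-term identification of the $k$th Erd\H{o}s--Gallai inequality with the $k$th Gale--Ryser inequality (your computation $\sum_{i=1}^k a_i + k(n-1) \le k(k-1) + (n-k)k + \sum_j \min\{k,b_j\}$ is exactly right), and the appeal to inversion symmetry for the ``moreover'' are all what is wanted. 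Your instinct to isolate the swing-vertex case ($a_n = 0$ or $b_m = n$) up front so that the $KS$-partition is unique and the top-$n$-degrees-are-clique-vertices dictionary is unambiguous is the correct way to handle the boundary bookkeeping; once you clean up the mid-stream self-corrections, this is a complete argument.
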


We characterize all indecomposable split graphs in the hereditary closure by the cross-degrees of the vertices in a given partition. The use of the cross-degree, rather than ordinary degree sequences, draws out the almost identical relationship between the cases of Theorems \ref{prop:degree_characterization_nonsplit} and \ref{prop:degree_characterization_split}. Here, (1a) and (1b) mirror (1) of Theorem \ref{prop:degree_characterization_nonsplit}, although the non-split graph $C_5$ has no parallel. In addition, (3a) and (3b) describe cross-degree sequence pairs where one term is as large as possible, and the terms on the other side are all $1$ or $2$, just as (3) in Theorem \ref{prop:degree_characterization_nonsplit} characterizes degree sequences where one term is as large as possible and all remaining terms are $1$ or $2$. In both theorems, the complements behave similarly: (2a) and (2b) together mirror (2), and (4a) and (4b) mirror (4). 

\begin{theorem} 
\label{prop:degree_characterization_split}
All realizations of a cross-degree sequence pair $\crosspair=\{(a_1, \ldots, a_n),(b_1, \ldots, b_m)\}$ are indecomposable and in $HCU$ if and only if $a_n, b_m \ge 1$, $a_1 \le m-1$, $b_1 \le n-1$, and at least one of the following holds:
    \begin{multicols}{2}
    \begin{enumerate}
        \item [\rm{(1a)}] $a_2=1$. 
        \item [\rm{(2a)}] $a_{n-1}=m-1$. 
        \item [\rm{(3a)}] $b_1=n-1$ and $a_1 \le 2$.
        \item [\rm{(4a)}] $b_m=1$ and $a_n \ge m-2$.
        \item [\rm{(1b)}] $b_2=1$. 
        \item [\rm{(2b)}] $b_{m-1}=n-1$. 
        \item [\rm{(3b)}] $a_1=m-1$ and $b_1 \le 2$.
        \item [\rm{(4b)}] $a_n=1$ and $b_m \ge n-2$.
    \end{enumerate}
    \end{multicols}
\end{theorem}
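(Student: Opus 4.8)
The plan is to prove Theorem~\ref{prop:degree_characterization_split} by leveraging Theorem~\ref{prop:indecomposable_nonunigraphs} (the characterization of $\NN$) together with Theorem~\ref{prop:indecomposable_unigraphs}, exactly paralleling the structure of the proof of Theorem~\ref{prop:degree_characterization_nonsplit}, but working with cross-degree sequence pairs rather than ordinary degree sequences. The necessity direction: suppose $G$ is an indecomposable split graph in $HCU$ with cross-degree sequence pair $\crosspair$. Indecomposability forbids dominating, isolated, and swing vertices (Remark~\ref{prop:indecomposable_DIS}), which translates into the boundary conditions $a_n, b_m \ge 1$ (no isolated vertex, no dominating clique vertex) and $a_1 \le m-1$, $b_1 \le n-1$ (no swing vertex, i.e.\ no vertex of degree equal to the clique number; equivalently no clique vertex adjacent to all of $S$ and no stable vertex adjacent to all of $K$). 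By Lemma~\ref{prop:induced_indecomposable} and Remark~\ref{prop:indecomposable_complement_inverse}, $G$ is induced in an indecomposable split unigraph $U$, which by Theorem~\ref{prop:indecomposable_unigraphs} lies in $\{K_1\} \cup \mathcal{T}_4 \cup \mathcal{T}_5 \cup \mathcal{T}_6 \cup \mathcal{T}_7$ up to complement/inverse/inverse-complement. Since $K_1$ and the families $\mathcal{T}_4, \mathcal{T}_5$ lie in $HU$ (as noted in the proof of Theorem~\ref{prop:indecomposable_nonunigraphs}), the genuinely new cases are $U \in \mathcal{T}_6$ or $U \in \mathcal{T}_7$, and here I invoke the analysis behind cases (3) and (4) of Theorem~\ref{prop:indecomposable_nonunigraphs}. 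For $U \in \mathcal{T}_6$ or $\mathcal{T}_7$ one reads off from the figure (Figure~\ref{fig:Split}) that every clique vertex has exactly one or two neighbors in $S$ beyond a shared structure, so the cross-degrees $a_i$ are bounded: in $\mathcal{T}_4,\mathcal{T}_5$-type pieces one gets $a_2 = 1$ (condition (1a)) or dually $b_2 = 1$ (1b); in $\mathcal{T}_6$-type pieces a single stable vertex $e$ has large cross-degree while clique cross-degrees are $\le 2$, yielding (3a)/(3b); and passing to complements and inverses produces (2a)/(2b) and (4a)/(4b). Going through the four families (and their images under the three involutions) and checking which of the eight conditions each forces is the bookkeeping core of this direction.

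For the sufficiency direction, I would proceed condition by condition, as in the non-split proof, reducing to essentially two cases by the symmetry between $\alpha$ and $\beta$ and by complementation. Suppose $\crosspair$ satisfies the four boundary inequalities and at least one of (1a)--(4b). If (1a) holds, so $\alpha = (a_1, 1, 1, \ldots, 1)$, then every clique vertex except possibly $v_1$ has exactly one neighbor in $S$; converting to a graph and matching against Figure~\ref{fig:Split} shows $G$ is (a subgraph-completion to) a graph in $\mathcal{T}_4$ or $\mathcal{T}_5$, hence $G$ itself lies in $HU \subseteq HCU$, using the boundary conditions to rule out swing vertices; this is the exact analogue of the (1) argument in Theorem~\ref{prop:degree_characterization_nonsplit}. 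If (3a) holds, so $b_1 = n-1$ and $a_1 \le 2$, then there is a stable vertex $e$ adjacent to all but one clique vertex and all clique cross-degrees are $1$ or $2$; I would embed $G$ into a graph $U \in \mathcal{T}_6$ by adding leaves to bring each clique vertex up to cross-degree exactly $2$ (one of which is $e$), exactly mirroring the $\mathcal{T}_3$ embedding in the (3) case of Theorem~\ref{prop:degree_characterization_nonsplit}; the resulting $U$ is an indecomposable unigraph containing $G$, so $G \in HCU$. Cases (2a), (4a) follow by taking complements/inverses of (1a), (3a) via Corollary~\ref{prop:HCU_closed_complements} and Remark~\ref{prop:indecomposable_complement_inverse}, and the ``b'' cases follow by swapping $\alpha \leftrightarrow \beta$ (which corresponds to passing to the inverse graph).

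Separately I must verify indecomposability of every realization. Here Lemma~\ref{prop:cross_degree_GaleRyser} is the right tool: for each of the eight conditions I check that the only index $k$ with $a_k > a_{k+1}$ (respectively $b_k > b_{k+1}$) makes the relevant Gale--Ryser inequality strict, given the boundary conditions $a_n,b_m \ge 1$ and $a_1 \le m-1$, $b_1 \le n-1$. For instance under (3a) the sequence $\alpha$ has at most two distinct values $\le 2$ so there is essentially one descent, and one computes $\sum_{i=1}^{k} a_i < \sum_{i=1}^m \min\{k, b_i\}$ because $b_1 = n-1$ contributes fully; this is the direct translation of the Erd\H{o}s--Gallai computation in the (3) case of the non-split theorem. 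The small-arithmetic checks for (1a)/(1b) and (2a)/(2b)/(4a)/(4b) are similar and routine.

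The main obstacle I anticipate is the necessity direction: correctly enumerating, for each of $\mathcal{T}_4,\ldots,\mathcal{T}_7$ and each of its three involutive images, precisely which induced subgraphs are indecomposable non-unigraphs (as opposed to unigraphs or decomposable graphs), and confirming that the cross-degree sequence pair of every such subgraph satisfies one of the eight conditions and conversely that no other cross-degree sequence pair sneaks in. This is where the bulk of the figure-reading and case analysis lives, and where it is easiest to miss a boundary configuration; I would organize it by first disposing of the $HU$ families, then handling $\mathcal{T}_6$ and $\mathcal{T}_7$ in detail using the structural descriptions preceding Theorem~\ref{prop:indecomposable_unigraphs}, and citing the parallel cases (3), (4), (5) of Theorem~\ref{prop:indecomposable_nonunigraphs} to avoid redoing that work.
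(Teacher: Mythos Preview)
Your overall strategy matches the paper's proof exactly: deduce the boundary inequalities from Remark~\ref{prop:indecomposable_DIS}, use Lemma~\ref{prop:induced_indecomposable} to reduce to induced subgraphs of the split families in Theorem~\ref{prop:indecomposable_unigraphs}, read off which of (1a)--(4b) holds in each family, and for sufficiency embed a realization into the appropriate $\mathcal{T}_i$ and verify indecomposability via Lemma~\ref{prop:cross_degree_GaleRyser}. However, your family-to-condition correspondence is shifted by one, and this causes the concrete constructions to fail.

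In the necessity direction you assert that in $\mathcal{T}_6$ ``clique cross-degrees are $\le 2$,'' but in $T_6(p,q_1,q_2)$ every clique vertex has cross-degree $p+1$, which is arbitrary. What is true is that every stable-set vertex of $\mathcal{T}_6$ except $e$ is a leaf, so $\mathcal{T}_6$ (like $\mathcal{T}_4,\mathcal{T}_5$) yields condition (1b), not (3a)/(3b). It is $\mathcal{T}_7$---where the extra clique vertex $f$ has cross-degree $|S|-1$ and all stable vertices have degree at most $2$---that produces (3b). Correspondingly, in the sufficiency direction your embeddings are wrong: if $\crosspair$ satisfies (1a) with $a_1>1$, then (taking $V$ as the stable set) the vertex $v_1$ has degree exceeding $1$, so $G$ cannot be an induced subgraph of any graph in $\mathcal{T}_4\cup\mathcal{T}_5$, where all stable-set vertices are leaves; the paper instead embeds $G$ into $T_6(b_1-1,a_1,m-a_1)$ with $v_1$ playing the role of $e$. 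Likewise, for (3a) with $a_2\ge 2$ there are at least two stable-set vertices of degree $2$, which no $\mathcal{T}_6$ graph accommodates; the paper adds a new clique vertex $w_{m+1}$ (the analogue of $f$) and embeds into $T_7(a-1,m-2)$. Once you correct this shift, the rest of your plan---including the Gale--Ryser checks for indecomposability and the reduction of the remaining six cases via complement and inverse---goes through essentially as the paper does it.
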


\begin{proof}
For the forward direction, let $G$ be an indecomposable split graph in $HCU$. By Remark \ref{prop:indecomposable_DIS} we know $G$ has no dominating, isolated, or swing vertices, hence $G$ has a unique $KS$-partition and unique cross-degree sequence pair $\crosspair$. The same remark tells us that $\alpha$ and $\beta$ each have at least two terms, and $a_1 \le m-1$, $b_1 \le n-1$, and $a_n, b_m \ge 1$. Since the conditions we seek to prove are symmetric with respect to $\alpha$ and $\beta$, we may assume without loss of generality that $K = \{v_1, \ldots, v_n\}$ and $S = \{w_1, \ldots, w_m\}$ where $\cross(v_i) = a_i$ and $\cross(w_j) = b_j$. By Lemma \ref{prop:induced_indecomposable}, $G$ is induced in an indecomposable unigraph $U \in \U$. We know from Theorem \ref{prop:indecomposable_nonunigraphs} that graphs in $\NN$ that are induced in non-split graphs in $\U$ are themselves non-split graphs. We can therefore choose $U$ to be a split graph. The split graphs in $\U$ are characterized in Theorem \ref{prop:indecomposable_unigraphs} and depicted in Figure \ref{fig:Split}. We consider each possibility for $U$. 

First, suppose $U$ is in ${{\mathcal{T}}}_4$ or ${{\mathcal{T}}_5}$. In this case, every vertex in the stable set $S$ has degree $1$, so $b_2 = 1$ and $\crosspair$ satisfies (1b). If $U$ is the inverse of a graph in ${\mathcal{T}_4}$ or ${\mathcal{T}_5}$, then $\crosspair$ satisfies (1a). If $U$ is the complement of a graph in ${\mathcal{T}_4}$ or ${\mathcal{T}_5}$ then $\crosspair$ satisfies (2b), and finally if $G$ is the inverse-complement of a graph in ${\mathcal{T}_4}$ or ${\mathcal{T}_5}$, then $\crosspair$ satisfies (2a). Next suppose $U$ is in ${\mathcal{T}_6}$. Then every vertex in $S$ has degree $1$, except possibly $w_1$, so $b_2=1$ and $\crosspair$ satisfies (1b). As above, if $U$ is the inverse, complement, or inverse-complement of a graph in ${\mathcal{T}_6}$, then $\crosspair$ satisfies (1a), (2b), or (2a), respectively. 

Finally, suppose $U$ is in ${\mathcal{T}_7}$ and refer to Figure $\ref{fig:Split}$. If $f \not \in V(G)$, then $G$ is induced in a graph in ${\mathcal{T}_6}$, and the previous case applies. If $f \in V(G)$ and $e \not \in V(G)$, then $f$ is a dominating vertex in $G$, a contradiction. It remains to consider when $e, f \in V(G)$. All vertices in $S$ have degree $1$ or $2$ and $f \in K$ is adjacent to all vertices in $S$ except $e$. Thus $a_1 = m-1$ and $b_1 \le 2$, and $\crosspair$ satisfies (3b). As before, if $U$ is the inverse, complement, or inverse-complement of a graph in ${\mathcal{T}_7}$, then $\crosspair$ satisfies (3a), (4b), or (4a), respectively.  

For the reverse direction, suppose $\crosspair = \{(a_1, \ldots, a_n), (b_1, \ldots, b_m)\}$ is a cross-degree sequence pair with $a_n, b_m \ge 1, a_1 \le m-1$, $b_1 \le n-1$, and satisfying at least one of the eight conditions. Let $G$ be a graph realizing $\crosspair$, with split partition $V(G)=V \cup W$, $V=\{v_1, \ldots, v_n\}$ and $W=\{w_1, \ldots, w_m\}$ such that $\cross(v_i)=a_i$ and $\cross(w_j)=b_j$ for $i \le n$ and $j \le m$. One of $V,W$ is a clique and the other is a stable set. We will show that $G$ is in $HCU$ and is indecomposable.

First suppose that $\crosspair$ satisfies (1a), thus vertices $v_2, \ldots, v_n$ each have cross-degree $1$ in $G$. If $a_1 = 1$ then $G$ is a graph in the family ${\mathcal{T}_5}$, hence is an indecomposable unigraph in $HCU$. Otherwise we will show that $G$ is induced in a unigraph $U \in {\mathcal{T}_6}$, where $W$ is the clique in the split partition of $W(G)$. It may be helpful to refer to Figure \ref{fig:Split}. For each $w_j \in W$, add $b_1 - b_j$ vertices to $V$, each of cross-degree $1$ and adjacent to $w_j$. This increases the cross-degree of every vertex in $W$ to $b_1$. Each vertex of $V$ other than $v_1$ (labeled $e$ in Figure \ref{fig:Split}) has cross-degree $1$ and the result is $U = T_6(b_1-1, a_1, m-a_1)$. The only descent in $\alpha$ occurs at index $1$, but by assumption $a_1 \le m-1$ so the $1$st Gale-Ryser inequality is strict: $a_1 < \sum_{i=1}^m \min\{1, b_i\} = m$. Since $1 \le a_n \le m-1$ by assumption, it follows by Lemma \ref{prop:cross_degree_GaleRyser} that $\crosspair$ is indecomposable and hence $G$ is as well. If $\crosspair$ satisfies (2a), then $\overline{G}$ satisfies (1a), so $G$ is indecomposable and contained in the indecomposable unigraph $\overline{U}$. 

Next suppose $\crosspair$ satisfies (3a). The case $a_2=1$ was considered in (1a), so we may assume $a_2 \ge 2$ and thus $\alpha = (2, 2, \ldots, 2, 1, \ldots, 1)$. Let $a$ be the larger of $b_2$ and the number of $1$s in $\alpha$. Since $b_1 = n-1$, there exists a unique vertex $v_j \in V$ such that $w_1$ is adjacent to all vertices in $V$ except $v_j$. We will show $G$ is induced in the unigraph $U = T_7(a - 1, m-2)$, where $W$ is the clique in the split partition of $V(G)$. Again, it is helpful to refer to Figure \ref{fig:Split}. First, add an additional vertex $w_{m+1}$ to $W$ with $w_{m+1}$ adjacent to every vertex in $V$ that has cross-degree equal to $1$. As a result, every vertex in $V$ now has cross-degree equal to $2$. Let $W' = W \cup \{w_{m+1}\}$. For each $w_i \in W'$ with $2 \le i \le m+1$, add $a - b_i$ vertices to $V$, where each added vertex has cross-degree $2$ and is adjacent to $w_i$ and $w_1$. As a result, each vertex in $W'$ other than $w_1$ has cross-degree equal to $a$. Let $V'$ be the set $V$ together with these added vertices. Since the cross-degree of $w_1$ is $|V'| -1$, the resulting split graph $U$ (where $V'$ is a stable set and $W'$ is a clique) contains $G$ as an induced subgraph and is equal to $T_7(a - 1, m-2)$. Here, $v_j$ is the vertex labeled $e$ and $w_1$ is the vertex labeled $f$ in Figure \ref{fig:Split}. Thus $G \in HCU$. 

We show that $G$ is indecomposable. Since all terms in $\alpha$ are less than or equal to two, the $k$th Gale-Ryser inequality is strict for $2 \le k \le m-1$, as we have $\sum_{i=1}^n \min\{k,a_i\} = \sum_{i=1}^n a_i = \sum_{i=1}^m b_i > \sum_{i=1}^k b_i$. The assumption $b_1 \le n-1$ implies that the $1$st Gale-Ryser inequality also holds strictly, so we conclude from Lemma \ref{prop:cross_degree_GaleRyser} that $\crosspair$ is indecomposable, thus $G$ is indecomposable. If $\crosspair$ satisfies (4a) then $\overline{G}$ satisfies (3a), so $G$ is contained in the indecomposable unigraph $\overline{U}$. 

Finally, if $\crosspair$ satisfies (1b), (2b), (3b), or (4b), the above arguments apply with the roles of $V$ and $W$ reversed and the graphs and unigraphs each replaced by their inverses. 
\end{proof}

\section{The Rao poset on degree sequences}
\label{section:Rao_poset}

In Section~\ref{sec: Rao-Minimal} we will provide another characterization of the class $HCU$ in terms of degree sequences. This characterization will be strongly linked to a forbidden induced subgraph characterization, which will follow in Section~\ref{sec: forbidden subgraphs}. The context that reveals this link is a partially ordered set on degree sequences introduced by S.B.~Rao that incorporates information about degree sequences of induced subgraphs.

In \cite{Rao80}, Rao defined a partial order on the set of degree sequences of simple graphs with the following relation: degree sequences $\pi,\rho$ satisfy $\pi \preceq \rho$ if there exists a realization $G$ of $\pi$ and a realization $H$ of $\rho$ such that $G$ is an induced subgraph of $H$.  We refer to $\preceq$ as the \emph{Rao relation}, and we may also write $\rho \succeq \pi$. We say that $\pi$ \emph{is Rao-contained in} $\rho$ or that $\rho$ \emph{Rao-contains} $\pi$ if  $\pi \preceq \rho$. Note that if $\rho$ does not Rao-contain $\pi$, then no realization of $\rho$ contains any realization of $\pi$ as an induced subgraph.

As a simple example, observe that $(2,2,2) \preceq (2,2,2,1,1)$, since $(2,2,2)$ is realized by $K_3$, and $(2,2,2,1,1)$ has $K_3+K_2$ as one of its realizations. (The fact that the other realization, a path on five vertices, does not contain $K_3$ as an induced subgraph is irrelevant.) Hence the Rao poset captures, in the context of degree sequences, some of the relationships that the induced subgraph order does for graphs. 

In~\cite{Rao80} Rao showed that $\preceq$ is indeed a partial order and described some of its properties and related questions. Of note, Rao asked whether the poset so defined is a well-quasi-order (WQO), i.e., whether for any infinite sequence $\pi^1, \pi^2, \dots$ of degree sequences, there would exist indices $i,j \in \mathbb{N}$ such that $i < j$ and $\pi^i \preceq \pi^j$. Equivalently, the partial order would permit no infinite set of pairwise incomparable degree sequences. This question was answered many years later in~\cite{ChudnovskySeymour14}, where Chudnovsky and Seymour proved that Rao's poset is in fact a WQO.

We will show in the next section that graphs in $HCU$ can be characterized as those graphs whose degree sequences avoid certain ``minimal obstructions'' in the Rao poset. To that end, we develop a few results here about Rao-containment.

Our first result shows that we can recognize Rao-containment without having to examine the realizations of the degree sequences involved. Given a degree sequence $\pi$ of length $n$ and an integer $k \in \{0,\dots,n\}$, let us say that to \emph{augment $\pi$ by the term $k$} means to create a new degree sequence by increasing $k$ distinct terms of $\pi$ by 1 and inserting a new term equal to $k$. Because of the choices available in which terms of $\pi$ to increase, there are often several ways to augment $\pi$ by an integer $k$.

For example, augmenting the degree sequence $\pi=(3,2,2,1)$ by the terms $0$, $1$, $2$, $3$, or $4$ can yield any of the following degree sequences:
\begin{center}
    \begin{tabular}{cccccc}
        Augmenting by: &
        $0$ & $1$ & $2$ & $3$ & $4$ \\ \hline
         &
        $(3,2,2,1,0)$ & 
        $(4,2,2,1,1)$ & 
        $(4,3,2,2,1)$ & 
        $(4,3,3,3,1)$ &
        $(4,4,3,3,2)$\\
         &
         & 
        $(3,3,2,1,1)$ &
        $(4,2,2,2,2)$ & 
        $(4,3,3,2,2)$ &
         \\
         &
         &
        $(3,2,2,2,1)$ &
        $(3,3,3,2,1)$ & 
        $(3,3,3,3,2)$ &
         \\
         &
         &
         &
        $(3,3,2,2,2)$ & 
         &
    \end{tabular}
\end{center}

The act of augmenting a degree sequence by an integer $k$ mimics the effect of adding a new vertex of degree $k$ to a graph realizing $\pi$. We can similarly define an operation that mimics vertex deletion. To \emph{decrement $\pi$ by the term $k$} will mean to remove a term of $\pi$ equal to $k$ and to decrease $k$ of the remaining positive terms by 1. This requires both that $\pi$ have $k$ as one of its terms and that $\pi$ have at least $k+1$ positive terms to begin with. Note that each degree sequence in the lists above can be decremented by the number heading its column in such a way that $(3,2,2,1)$ is the result. We call the resulting sequences augmentations and decrementations of $\pi$, respectively. 

\begin{lemma} \label{lem: allowing RaoUp, RaoDown}
Let $\pi$ and $\rho$ be degree sequences of graphs. The following are equivalent.
\begin{enumerate}
    \item[\textup{(1)}] The relation $\pi \preceq \rho$ holds.
    \item[\textup{(2)}] The sequence $\rho$ can be formed by iteratively augmenting $\pi$ and each resulting sequence by non-negative integers.
    \item[\textup{(3)}] The sequence $\pi$ can be formed by iteratively decrementing $\rho$ and each resulting sequence by a term from the sequence.
\end{enumerate}
\end{lemma}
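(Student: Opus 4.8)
The strategy is to prove the cycle of implications $(2)\Rightarrow(1)\Rightarrow(3)\Rightarrow(2)$ (or, symmetrically, to prove $(1)\Leftrightarrow(2)$ and $(1)\Leftrightarrow(3)$ separately, observing that (2) and (3) are ``inverses'' of each other). The conceptual content is that augmenting a degree sequence $\pi$ by a term $k$ is precisely the degree-sequence shadow of adding one new vertex of degree $k$ to \emph{some} realization of $\pi$, and decrementing is the shadow of deleting one vertex.

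First I would establish the single-step geometric lemma underlying everything: if $\sigma$ is obtained from $\pi$ by augmenting by the term $k$, then every realization-relationship can be arranged, i.e., there is a realization $H$ of $\sigma$ and a vertex $v\in V(H)$ with $\deg_H(v)=k$ such that $H-v$ realizes $\pi$; and conversely, if $H$ realizes $\sigma$ and $v$ is any vertex, then the degree sequence of $H-v$ is some sequence obtainable from $\sigma$ by decrementing by the term $\deg_H(v)$. The forward direction here requires a small but genuine argument: given a realization $G$ of $\pi$ and a prescribed set of $k$ vertices of $G$ whose degrees are to be raised, add a new vertex $v$ joined to exactly those $k$ vertices --- this directly produces a realization of $\sigma$. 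The only subtlety is that the $k$ vertices of $G$ selected to be incremented, read off in nonincreasing order, give degrees that must be ``compatible'' with $\sigma$'s sorted form; but since $\sigma$ is \emph{defined} as the sorted result of incrementing $k$ chosen terms of $\pi$ and inserting $k$, such a choice of $k$ vertices in $G$ exists by matching terms. With this single-step lemma in hand, $(2)\Rightarrow(1)$ follows by induction on the number of augmentation steps: a chain $\pi=\pi_0,\pi_1,\dots,\pi_t=\rho$ where each $\pi_{i+1}$ is an augmentation of $\pi_i$ lifts, step by step, to a chain of realizations each induced in the next, and induced-subgraph containment is transitive, so some realization of $\pi$ is induced in some realization of $\rho$, i.e. $\pi\preceq\rho$.

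For $(1)\Rightarrow(3)$, suppose $G$ realizes $\pi$, $H$ realizes $\rho$, and $G$ is an induced subgraph of $H$. Delete the vertices of $V(H)\setminus V(G)$ from $H$ one at a time; by the single-step lemma (deletion direction), each deletion replaces the current degree sequence by one of its decrementations by the term equal to the degree of the removed vertex \emph{in the current graph}. After all $|V(H)|-|V(G)|$ deletions we reach $G$, whose degree sequence is $\pi$, so $\pi$ is obtained from $\rho$ by iterated decrementing. Finally $(3)\Rightarrow(2)$ is a purely formal observation: decrementing $\rho$ by a term $k$ to get $\rho'$ is exactly the statement that $\rho$ is an augmentation of $\rho'$ by the term $k$ (the two operations are definitionally inverse --- ``remove a term $k$ and lower $k$ positive terms'' undone by ``raise $k$ terms and insert a term $k$''), so reading the decrementing chain from $\pi$ up to $\rho$ exhibits $\rho$ as obtained from $\pi$ by iterated augmentation. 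One should double-check the positivity/length bookkeeping here: decrementing requires $\rho$ to have at least $k+1$ positive terms, which is automatically satisfied because we are decrementing an actual degree sequence with a vertex of degree $k$ present, and the ``$k$ positive terms remaining'' condition corresponds on the augmentation side to the fact that we only ever raise terms that were present.

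\textbf{Main obstacle.} The only place requiring real care is the single-step augmentation lemma's forward direction --- matching the combinatorial definition of ``augment by $k$'' (increase $k$ terms of the sorted sequence, insert $k$) to the graph operation (pick $k$ vertices, attach a new vertex to them). One must verify that for any way of augmenting $\pi$ by $k$, there really is a realization of $\pi$ together with a valid choice of $k$ vertices to increment producing that augmentation; the cleanest route is to fix an arbitrary realization $G$ of $\pi$ with vertices labeled so their degrees match $\pi$ term-by-term, and observe that specifying \emph{which} $k$ terms of $\pi$ to raise is the same as specifying which $k$ vertices of $G$ to join to the new vertex, after which the resulting graph's degree sequence is exactly the prescribed augmentation. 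Everything else is induction and the transitivity of the induced-subgraph relation, which are routine.
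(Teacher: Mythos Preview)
Your proposal is correct and follows essentially the same approach as the paper: the paper proves $(1)\Rightarrow(3)$ by deleting the vertices of $V(H)\setminus V(G)$ one at a time, observes $(2)\Leftrightarrow(3)$ by noting the operations are formally inverse, and proves $(2)\Rightarrow(1)$ by starting from an arbitrary realization of $\pi$ and adding vertices step by step. Your cycle $(2)\Rightarrow(1)\Rightarrow(3)\Rightarrow(2)$ uses exactly these same three ideas, and your discussion of the ``single-step lemma'' just makes explicit what the paper leaves implicit.
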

\begin{proof}
    To see that (1) implies (3), let $G$ and $H$ be realizations of $\pi$ and $\rho$, respectively, such that $G$ is an induced subgraph of $H$. Denote the vertices in $V(H) - V(G)$ by $v_1,\dots,v_p$. Letting $H_i = H_{i-1} -v_i$ for $1\leq i \leq p$ yields a sequence $H=H_0,H_1,\dots,H_p = G$ of induced subgraphs of $H$. Here the degree sequence of each $H_i$ is obtained by decrementing the degree sequence of $H_{i-1}$ by the degree (in $H_{i-1}$) of $v_i$. 
    
    The equivalence of (2) and (3) can be verified by paying attention to the inserted/removed term and the increased/decreased terms at each iterative step.
    
    To see that (2) implies (1), let $\pi=\pi^0,\pi^1,\dots,\pi^q= \rho$ be the degree sequences obtained at each stage of the augmenting that creates $\rho$ from $\pi$, and for $i \in \{0,\dots, q-1\}$ let $k_i$ denote the integer used in augmenting $\pi^i$ to form $\pi^{i+1}$. Now let $G_0$ be any realization of $\pi=\pi^0$. Form a realization $G_1$ of $\pi^1$ by adding a new vertex $v$ to $G_0$ and joining it by edges to $k_0$ vertices having degrees in $G_0$ that exactly match the terms of $\pi^0$ augmented to form $\pi^1$. The resulting realization $G_1$ contains $G_0$ as an induced subgraph. Proceeding inductively, we may construct $G_2,\dots,G_q = H$ in this same way. Since $H$ will be a realization of $\rho$ containing $G$ as an induced subgraph, we have $\pi \preceq \rho$.
\end{proof}

Lemma~\ref{lem: allowing RaoUp, RaoDown} allows us to more conveniently automate the process of determining whether $\pi \preceq \rho$ for given degree sequences $\pi,\rho$. 

We now show that the notion of Rao-containment allows us to characterize the graphs in $HCU$, along with those in $HU$ from~\cite{Ba12,Ba13}.

\begin{lemma} \label{lem: Rao chars exist}
    Suppose that $\pi \preceq \rho$. If $\rho$ is the degree sequence of a graph in $HU$, then so is $\pi$; likewise, if $\rho$ is the degree sequence of a graph in $HCU$, then so is $\pi$.
\end{lemma}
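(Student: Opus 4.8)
The plan is to reduce the statement to a graph-theoretic fact about induced subgraphs of unigraphs (respectively of graphs in $HU$), using Lemma~\ref{lem: allowing RaoUp, RaoDown} to pass from the Rao relation to an actual induced-subgraph witness. Concretely, suppose $\pi \preceq \rho$ and $\rho$ is the degree sequence of some graph in $HCU$. By Lemma~\ref{lem: allowing RaoUp, RaoDown}, there exist realizations $G$ of $\pi$ and $H$ of $\rho$ with $G$ an induced subgraph of $H$. The point is that since membership in $HCU$ is determined by degree sequence (Lemma~\ref{prop:HCU_closed_degree}), the fact that \emph{some} realization of $\rho$ lies in $HCU$ forces \emph{every} realization of $\rho$ to lie in $HCU$ — in particular $H \in HCU$. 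But $HCU$ is hereditary by its very definition (the hereditary closure of the unigraphs), so the induced subgraph $G$ of $H$ also lies in $HCU$, and hence $\pi$, being the degree sequence of $G \in HCU$, is the degree sequence of a graph in $HCU$.

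For the $HU$ half of the statement the argument has the same skeleton but requires one extra observation, since $HU$ is the class of unigraphs all of whose induced subgraphs are unigraphs, and its ``closed under degree sequence'' property is slightly more delicate. First I would note that if $\rho$ is the degree sequence of a graph $U$ in $HU$, then $U$ is a unigraph, so $U$ is the \emph{unique} realization of $\rho$; thus the realization $H$ of $\rho$ produced by Lemma~\ref{lem: allowing RaoUp, RaoDown} is literally $U$ itself (up to isomorphism), and $G$ is an induced subgraph of $U \in HU$. Since $HU$ is hereditary, $G \in HU$, so $G$ is a unigraph all of whose induced subgraphs are unigraphs. Then $\pi$ is the degree sequence of $G$, a graph in $HU$, as desired. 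If one prefers not to invoke heredity of $HU$ as a black box, one can instead cite that $HU$ is exactly the class of graphs all of whose induced subgraphs (including themselves) are unigraphs, which is visibly hereditary.

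The main obstacle — really the only thing to be careful about — is the logical quantifier structure hidden in the phrase ``the degree sequence of a graph in $HCU$''. The Rao relation only guarantees \emph{one} pair of realizations in the induced-subgraph relation, and a priori the realization $H$ of $\rho$ handed to us by Lemma~\ref{lem: allowing RaoUp, RaoDown} need not be the particular graph in $HCU$ that we were told realizes $\rho$. The resolution in the $HCU$ case is exactly Lemma~\ref{prop:HCU_closed_degree}: membership in $HCU$ is a property of the degree sequence, so all realizations of $\rho$ are in $HCU$ and we may use whichever one we like. In the $HU$ case the resolution is even cleaner, because a graph in $HU$ is a unigraph and so its degree sequence has a unique realization, leaving no ambiguity at all. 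Once this point is handled, the rest is just ``heredity of $HCU$ (resp.\ $HU$)'' applied to $G \subseteq H$, which is immediate from the definitions.
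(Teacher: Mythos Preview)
Your proof is correct and follows essentially the same approach as the paper's: for $HU$ you use that a graph in $HU$ is a unigraph (hence the unique realization of $\rho$) together with heredity of $HU$, and for $HCU$ you use Lemma~\ref{prop:HCU_closed_degree} to transfer $HCU$-membership to the realization $H$ and then apply heredity. One small note: the existence of realizations $G \subseteq H$ follows directly from the definition of $\preceq$, so your citation of Lemma~\ref{lem: allowing RaoUp, RaoDown} is unnecessary (though harmless).
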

\begin{proof}
    Suppose first that $\pi \preceq \rho$ and that $\rho$ is the degree sequence of a graph $H \in HU$. Since $H$ is a unigraph, it is the only realization of $\rho$ up to isomorphism. By the definition of Rao-containment, there exists a realization $G$ of $\pi$ that is an induced subgraph of $H$. We know that $G$ and all its induced subgraphs are unigraphs because $H \in HU$; hence $\pi$ is the degree sequence of a graph $G \in HU$.
    
    Suppose instead that $\pi \preceq \rho$ and that $\rho$ is the degree sequence of a graph in $HCU$. Since $\pi \preceq \rho$, there exists a realization $G$ of $\pi$ that is an induced subgraph of a realization $H$ of $\rho$. Since the degree sequence of $H$ is that of a graph in HCU, by Lemma~\ref{prop:HCU_closed_degree} we conclude that $H$ is an induced subgraph of a unigraph $J$. Since the induced subgraph relation is transitive, $G$ is an induced subgraph of $J$, and $\pi$ is the degree sequence of a graph in $HCU$.
\end{proof}

We will present the characterization of the graphs in $HCU$ via Rao-containment in the next section. We use this later in Section 7 to find a forbidden induced subgraph characterization, so we conclude this section by developing the relationship between Rao-containment in degree sequences and containment of induced subgraphs in graphs.

Given a collection $\mathcal{D}$ of degree sequences, let $\mathcal{U}(\mathcal{D})$ denote the set of all degree sequences $\rho$ such that $\rho \succeq \pi$ for some $\pi \in \mathcal{D}$. Now consider the infinitely many graphs having an element of $\mathcal{U}(\mathcal{D})$ as their degree sequence. Many of these graphs are induced subgraphs of others. Let $\mathcal{M}(\mathcal{D})$ denote the minimal such graphs with respect to the induced subgraph order. Alternatively, we recognize that $G \in \mathcal{M}(\mathcal{D})$ if the degree sequence of $G$ belongs to $\mathcal{U}(\mathcal{D})$ but no proper induced subgraph of $G$ has its degree sequence in $\mathcal{U}(\mathcal{D})$. 

For example, suppose that $\mathcal{D} = \{(2,2,2)\}$. Then $\mathcal{M}(\mathcal{D})$ contains the graphs $K_3$, $P_5$, and $K_{2,3}$, among others. Note that neither $P_5$ nor $K_{2,3}$ contains $K_3$ as an induced subgraph, but each has a degree sequence that Rao-contains $(2,2,2)$, and no proper induced subgraph of these graphs has that same property. 

\begin{lemma} \label{lem: equivalent Rao and forb subgr}
    Let $\mathcal{D}$ be a collection of degree sequences. For any graph $G$, the following are equivalent.
    \begin{enumerate}
        \item[\textup{(1)}] The degree sequence of $G$ Rao-contains some element of $\mathcal{D}$.
        \item[\textup{(2)}] The graph $G$ contains an element of $\mathcal{M}(\mathcal{D})$ as an induced subgraph.
    \end{enumerate}
\end{lemma}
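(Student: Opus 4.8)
The plan is to prove the two implications separately, using the fact that $\mathcal{M}(\mathcal{D})$ consists precisely of the induced-subgraph-minimal graphs whose degree sequence lies in $\mathcal{U}(\mathcal{D})$, where $\mathcal{U}(\mathcal{D})$ is upward closed under $\succeq$ in the Rao poset.

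For the direction (2) $\Rightarrow$ (1): suppose $G$ contains some $M \in \mathcal{M}(\mathcal{D})$ as an induced subgraph. By definition of $\mathcal{M}(\mathcal{D})$, the degree sequence $\sigma$ of $M$ lies in $\mathcal{U}(\mathcal{D})$, so $\sigma \succeq \pi$ for some $\pi \in \mathcal{D}$. Let $\rho$ be the degree sequence of $G$. Since $M$ is an induced subgraph of $G$, the defining property of the Rao relation gives $\sigma \preceq \rho$. Transitivity of $\preceq$ (Rao showed $\preceq$ is a partial order) then yields $\pi \preceq \sigma \preceq \rho$, so $\rho$ Rao-contains $\pi \in \mathcal{D}$, which is (1).

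For the direction (1) $\Rightarrow$ (2): suppose the degree sequence $\rho$ of $G$ Rao-contains some $\pi \in \mathcal{D}$, i.e.\ $\rho \in \mathcal{U}(\mathcal{D})$. Consider the (finite, nonempty) set of induced subgraphs $H$ of $G$ whose degree sequence lies in $\mathcal{U}(\mathcal{D})$; it is nonempty since $G$ itself qualifies. Choose $H$ minimal in this set with respect to the induced subgraph order on subgraphs of $G$. I claim $H \in \mathcal{M}(\mathcal{D})$. Indeed, the degree sequence of $H$ is in $\mathcal{U}(\mathcal{D})$ by choice; and if some proper induced subgraph $H'$ of $H$ had its degree sequence in $\mathcal{U}(\mathcal{D})$, then $H'$ would be a proper induced subgraph of $G$ with degree sequence in $\mathcal{U}(\mathcal{D})$, contradicting minimality of $H$ within subgraphs of $G$. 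Hence no proper induced subgraph of $H$ has degree sequence in $\mathcal{U}(\mathcal{D})$, so $H \in \mathcal{M}(\mathcal{D})$, and $G$ contains $H$ as an induced subgraph, giving (2).

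The argument is almost purely formal, so there is no serious obstacle; the one point requiring a moment of care is the subtle distinction between "minimal among induced subgraphs of $G$ with the property" and "minimal among all graphs with the property" — these coincide here precisely because the property of having degree sequence in $\mathcal{U}(\mathcal{D})$ depends only on the isomorphism type of the subgraph, so minimality tested against proper induced subgraphs of $H$ (which, up to isomorphism, are exactly the proper induced subgraphs of $H$ as an abstract graph) matches the definition of $\mathcal{M}(\mathcal{D})$. One should also note explicitly that $\mathcal{U}(\mathcal{D})$ is upward closed, i.e.\ $\rho \in \mathcal{U}(\mathcal{D})$ and $\rho \preceq \tau$ imply $\tau \in \mathcal{U}(\mathcal{D})$, which is immediate from transitivity of $\preceq$ and is what makes the whole equivalence go through.
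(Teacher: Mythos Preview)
Your proof is correct and follows essentially the same approach as the paper's. The only difference is that where the paper handles $(1)\Rightarrow(2)$ in one line (``then $\rho\in\mathcal{U}(\mathcal{D})$, and by definition, $G$ contains an element of $\mathcal{M}(\mathcal{D})$ as an induced subgraph''), you spell out the underlying finiteness argument by explicitly choosing a minimal induced subgraph of $G$ with degree sequence in $\mathcal{U}(\mathcal{D})$ and verifying it lies in $\mathcal{M}(\mathcal{D})$; your remark distinguishing minimality within subgraphs of $G$ from minimality among all graphs is a nice clarification of exactly the point the paper glosses over.
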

\begin{proof}
    Given a fixed collection $\mathcal{D}$ of degree sequences, in the following let $G$ be a graph, and let $\rho$ be its degree sequence.
    
    Suppose first that $\rho$ Rao-contains some element of $\mathcal{D}$. Then $\rho \in \mathcal{U}(d)$, and by definition, $G$ contains an element of $\mathcal{M}(\mathcal{D})$ as an induced subgraph. This shows that (1) implies (2).
    
    To prove the converse, suppose that $G$ contains some element $H$ of $\mathcal{M}(\mathcal{D})$ as an induced subgraph. If $\pi$ denotes the degree sequence of $H$, then we have $\rho \succeq \pi$. Now let $\tau$ denote a degree sequence in $\mathcal{D}$ such that $\pi \succeq \tau$. By the transitivity of Rao's relation, we see that $\rho \succeq \tau$. This shows that (2) implies (1) and completes our proof.
\end{proof}

We illustrate Lemma~\ref{lem: equivalent Rao and forb subgr} with a comment on the family $HU$ of hereditary unigraphs. In~\cite{Ba12}, Barrus presented a list $\mathcal{F}$ of 16 minimal forbidden induced subgraphs for $HU$; the elements of $\mathcal{F}$ are displayed in Figure~\ref{fig: hereditary unigraphs forbidden}. We add a new characterization of the class $HU$ in terms of Rao-containment.

\begin{figure}
    \centering
    \includegraphics[width=\textwidth]{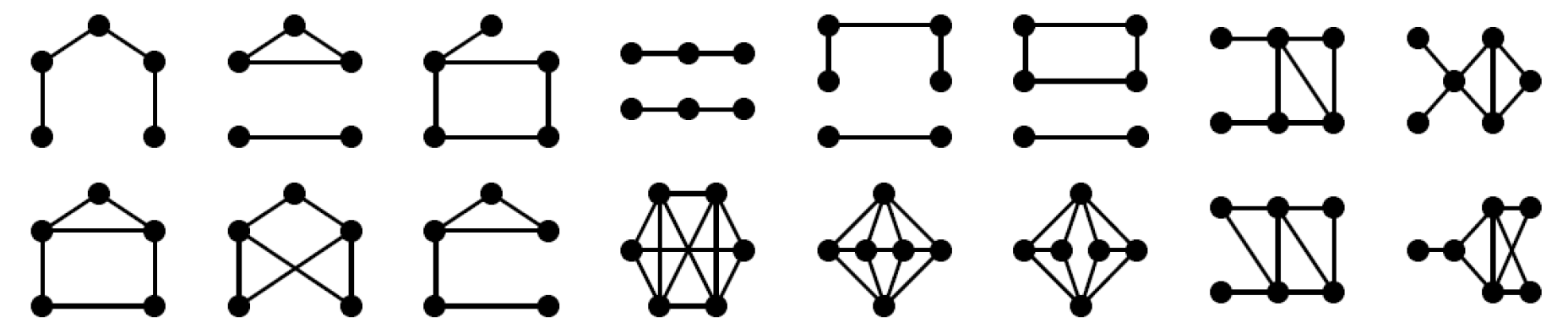}
    \caption{Minimal forbidden induced subgraphs for the family $HU$}
    \label{fig: hereditary unigraphs forbidden}
\end{figure}

\begin{theorem} \label{thm: hereditary unigraphs Rao}
The following are equivalent for a graph $G$ having degree sequence $\pi$.
\begin{enumerate}
    \item[\textup{(1)}] The graph $G$ belongs to $HU$.
    \item[\textup{(2)}] The graph $G$ contains no element of $\mathcal{F}$ as an induced subgraph.
    \item[\textup{(3)}] The sequence $\pi$ does not Rao-contain any element of $\mathcal{L}_1$, where \begin{multline} \mathcal{L}_1 = \{(2,2,2,1,1), (3,2,2,2,1), (3,3,2,2,2), \nonumber \\ (2,2,1,1,1,1), (4,3,3,2,1,1), (4,4,3,2,2,1), (4,4,4,4,3,3)\}. \nonumber \end{multline}
    \end{enumerate}
\end{theorem}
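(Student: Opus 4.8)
The plan is to establish the cycle of implications (1) $\Rightarrow$ (2) $\Rightarrow$ (3) $\Rightarrow$ (1), leaning on the known equivalence of (1) and (2) from~\cite{Ba12} so that the genuine work is connecting the Rao-containment condition (3) to the forbidden-subgraph condition (2). The natural tool is Lemma~\ref{lem: equivalent Rao and forb subgr} applied with $\mathcal{D} = \mathcal{L}_1$: it tells us that $\pi$ Rao-contains an element of $\mathcal{L}_1$ if and only if $G$ contains an element of $\mathcal{M}(\mathcal{L}_1)$ as an induced subgraph. So the heart of the argument reduces to proving the graph-theoretic identity
\begin{equation*}
\mathcal{M}(\mathcal{L}_1) = \mathcal{F},
\end{equation*}
after which (2) $\Leftrightarrow$ (3) is immediate and the theorem follows by combining with Barrus's result that (1) $\Leftrightarrow$ (2).

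First I would verify that every degree sequence in $\mathcal{L}_1$ is in fact the degree sequence of some graph in $\mathcal{F}$ — indeed, one should check that $\mathcal{L}_1$ is exactly the set of degree sequences of the 16 graphs in $\mathcal{F}$ (several graphs in $\mathcal{F}$ share a degree sequence, e.g.\ the two realizations of $(2,2,2,1,1)$, which is why $|\mathcal{L}_1| = 7 < 16$). This establishes $\mathcal{D} = \mathcal{L}_1$ consists of ``small obstructions'' that are realized inside $\mathcal{F}$. Next, for the inclusion $\mathcal{F} \subseteq \mathcal{M}(\mathcal{L}_1)$: each $H \in \mathcal{F}$ has its degree sequence in $\mathcal{L}_1 \subseteq \mathcal{U}(\mathcal{L}_1)$, so I need to check that no proper induced subgraph of $H$ has a degree sequence that Rao-contains an element of $\mathcal{L}_1$; equivalently, using Lemma~\ref{lem: allowing RaoUp, RaoDown}(3), that no sequence in $\mathcal{L}_1$ can be reached by iteratively decrementing the degree sequence of a proper induced subgraph of $H$. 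Since $\mathcal{F}$ is a finite list of small graphs, this is a finite (if tedious) verification — the kind of thing one automates. For the reverse inclusion $\mathcal{M}(\mathcal{L}_1) \subseteq \mathcal{F}$: take $G \in \mathcal{M}(\mathcal{L}_1)$; then $G \notin HU$ because its degree sequence Rao-contains a non-$HU$ sequence (each element of $\mathcal{L}_1$ is non-unigraphic or has a non-unigraphic induced subgraph, hence by Lemma~\ref{lem: Rao chars exist} lies outside the $HU$ degree sequences) — so by (1) $\Leftrightarrow$ (2), $G$ contains some $H \in \mathcal{F}$ as an induced subgraph; but $H$'s degree sequence is in $\mathcal{L}_1$, so if $H$ were a proper induced subgraph of $G$ this would contradict minimality of $G$; therefore $G = H \in \mathcal{F}$.

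The main obstacle I anticipate is the finite verification underlying $\mathcal{F} \subseteq \mathcal{M}(\mathcal{L}_1)$, i.e.\ confirming minimality. One must rule out, for each of the 16 graphs $H \in \mathcal{F}$ and each proper induced subgraph $H'$, that the degree sequence of $H'$ Rao-contains any of the seven sequences in $\mathcal{L}_1$. This is not conceptually hard — Lemma~\ref{lem: allowing RaoUp, RaoDown} makes Rao-containment a purely combinatorial, decidable condition on integer sequences — but it involves many cases and is error-prone by hand; the cleanest exposition is to note that such a check is routine and was carried out computationally, citing the search of Section~\ref{sec: forbidden subgraphs}, while giving one or two representative cases (for instance, showing that deleting a vertex from the bull or from one of the $2,2,2,1,1$-realizations drops the degree sequence strictly below everything in $\mathcal{L}_1$ in the Rao order). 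A secondary subtlety is making sure $\mathcal{L}_1$ is ``closed enough'' — specifically, that whenever a degree sequence $\rho$ is the degree sequence of a non-$HU$ graph, $\rho$ Rao-contains an element of $\mathcal{L}_1$; this is exactly the content of $\mathcal{M}(\mathcal{L}_1) = \mathcal{F}$ combined with Barrus's theorem, so there is no circularity, but the write-up should present the equivalences in the order (1) $\Leftrightarrow$ (2) (cited), then (2) $\Leftrightarrow$ (3) via $\mathcal{M}(\mathcal{L}_1) = \mathcal{F}$, to keep the logic clean.
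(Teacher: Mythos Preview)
Your overall strategy is workable, but there is a factual slip and an unnecessary detour compared to the paper's argument.

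The slip: it is not true that $\mathcal{L}_1$ is exactly the set of degree sequences of the sixteen graphs in $\mathcal{F}$. Two of those graphs, namely $K_2+C_4$ and its complement, have degree sequences $(2,2,2,2,1,1)$ and $(4,4,3,3,3,3)$, neither of which appears in $\mathcal{L}_1$. These two sequences do Rao-contain $(2,2,2,1,1)$ and $(3,3,2,2,2)$ respectively, so the inclusion $\mathcal{F}\subseteq\mathcal{M}(\mathcal{L}_1)$ can be salvaged, but the assertion ``each $H\in\mathcal{F}$ has its degree sequence in $\mathcal{L}_1$'' is false as stated, and the parallel step in your proof of $\mathcal{M}(\mathcal{L}_1)\subseteq\mathcal{F}$ (``$H$'s degree sequence is in $\mathcal{L}_1$'') breaks in the same place.

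The detour: the paper never proves $\mathcal{M}(\mathcal{L}_1)=\mathcal{F}$, and in particular never performs the minimality verification you flag as the main obstacle. It argues (1) $\Rightarrow$ (3) directly from Lemma~\ref{lem: Rao chars exist}: each sequence in $\mathcal{L}_1$ is non-unigraphic (at least two realizations appear in $\mathcal{F}$), so no $HU$ degree sequence can Rao-contain one. For (3) $\Rightarrow$ (2) it takes the contrapositive: if $G$ contains some $F\in\mathcal{F}$, then $\pi$ Rao-contains the degree sequence of $F$; for fourteen of the sixteen graphs this degree sequence already lies in $\mathcal{L}_1$, and for $K_2+C_4$ and its complement one invokes the two Rao-containments above and transitivity. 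That is the whole argument --- no case-check that proper induced subgraphs of elements of $\mathcal{F}$ avoid $\mathcal{U}(\mathcal{L}_1)$ is ever needed. Your route via Lemma~\ref{lem: equivalent Rao and forb subgr} would eventually yield the same conclusion, but it buys the stronger statement $\mathcal{M}(\mathcal{L}_1)=\mathcal{F}$ at the cost of a verification the theorem does not require.
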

\begin{proof}
The equivalence of (1) and (2) was shown in~\cite{Ba12}. Observe that no degree sequence in $\mathcal{L}_1$ is unigraphic because at least two realizations of each appear in Figure~\ref{fig: hereditary unigraphs forbidden}. Hence by Lemma~\ref{lem: Rao chars exist} the statement (1) implies (3).

We now show that (3) implies (2). Suppose that a graph $G$ having degree sequence $\pi$ contains an element $F$ of $\mathcal{F}$ as an induced subgraph. Then $\pi$ Rao-contains the degree sequence of $F$ by definition. If $F$ is the graph $K_2+C_4$ or its complement, then $\pi$ Rao-contains $(2,2,2,2,1,1)$ or $(4,4,3,3,3,3)$, which respectively Rao-contain the sequences $(2,2,2,1,1)$ and $(3,3,2,2,2)$ from $\mathcal{L}_1$. Otherwise, the degree sequence of $F$ is listed in $\mathcal{L}_1$, so $\pi$ Rao-contains an element of $\mathcal{L}_1$.
\end{proof}

\section{A Rao-minimal degree sequence characterization of $HCU$} \label{sec: Rao-Minimal}

Lemma~\ref{lem: Rao chars exist} implies that graphs in $HU$ and $HCU$ can be characterized as those whose degree sequences do not Rao-contain certain minimal ``forbidden'' sequences. This is illustrated for the family $HU$ in part (3) of Theorem~\ref{thm: hereditary unigraphs Rao}, where we give a list $\mathcal{L}_1$ of seven sequences that are not Rao-contained by the degree sequence of any graph in $HU$. Conversely, any graph not in $HU$ has a degree sequence that Rao-contains at least one element of $\mathcal{L}_1$.

In this section we characterize graphs in $HCU$ in a similar manner. We determine a minimal list $\mathcal{L}_2$ of degree sequences such that a graph $G$ belongs to $HCU$ if and only if the degree sequence of $G$ does not Rao-contain any element of $\mathcal{L}_2$. We will do this in two parts, handling the degree sequences of non-split and split graphs separately. The resulting elements of $\mathcal{L}_2$ are listed in Theorems~\ref{prop:Rao_minimal_nonsplit} and~\ref{prop:Rao_minimal_split}.

In light of Lemma~\ref{prop:HCU_closed_degree}, a degree sequence is \emph{forbidden} if it is not the degree sequence of a graph in $HCU$. A forbidden degree sequence is \emph{Rao-minimal} if it does not Rao-contain any other forbidden degree sequence. (Note that the degree sequences comprising $\mathcal{L}_1$ in Theorem \ref{thm: hereditary unigraphs Rao}(3) are each Rao-minimal with respect to the corresponding notion of ``forbidden'' for $HU$.) From the definition of Rao-containment, it follows that Rao-minimal sequences are related to minimal forbidden induced subgraphs. We record this, along with several observations derived from Corollary \ref{prop:MFIS_indecomp_closed}, as a remark. 

\begin{remark}
\label{prop:Rao_min_indecomp_complement}
    \rm A forbidden degree sequence $\pi$ is Rao-minimal if and only if its realizations are all minimal forbidden induced subgraphs for the class $HCU$. Moreover, each Rao-minimal sequence $\pi$ is indecomposable, and the degree sequence of the complement, and, if split, the inverse and inverse-complement, of any realization of $\pi$ is also Rao-minimal. 
\end{remark}

\begin{figure}
    \centering
    \includegraphics[height=4cm]{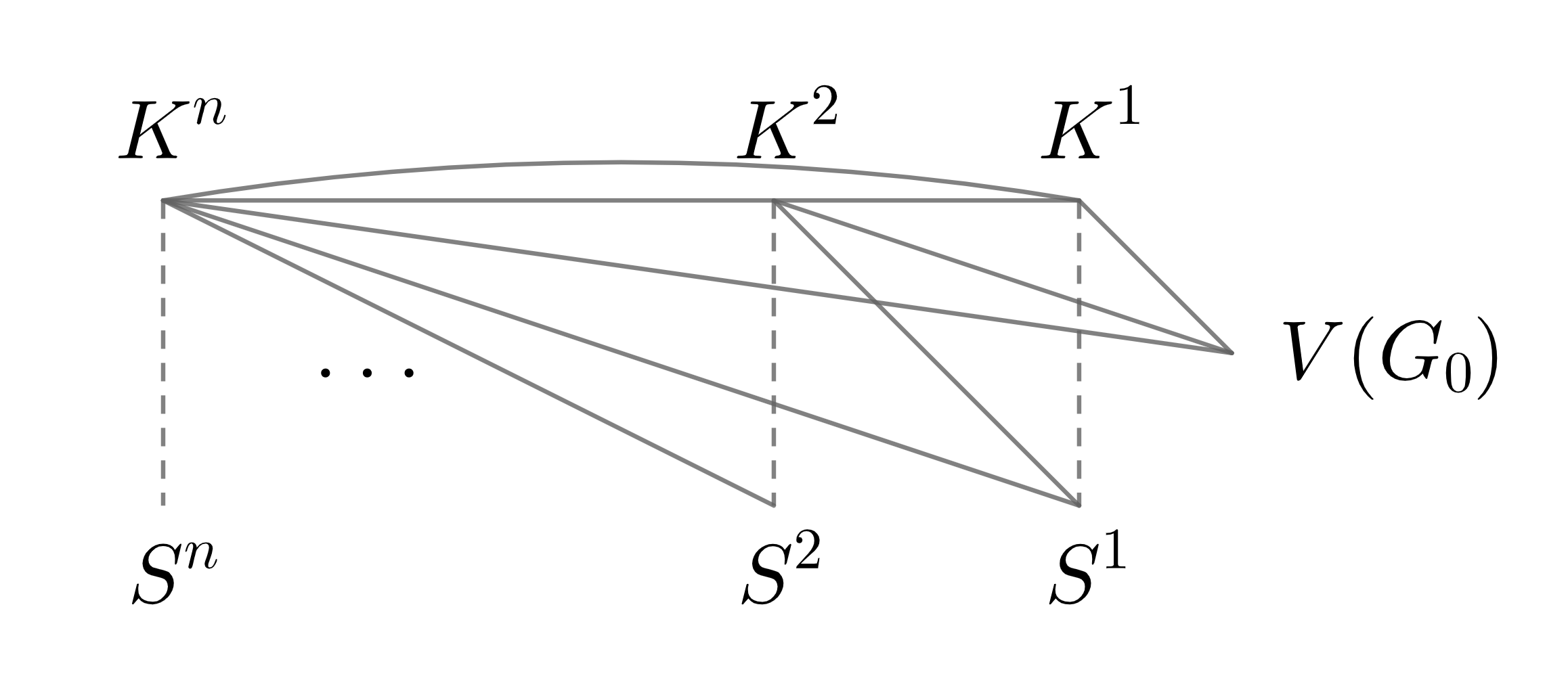} 
    \caption{The structure of the composition $G_n \circ \ldots \circ G_0$, where $V(G_i)$ is given the $KS$-partition $K_i \cup S_i$. Solid lines indicate that all edges are present between two vertex sets, dotted lines indicate that some edges may exist, and two sets not connected with a line have no edges between them.}
    \label{fig:composition_degree}
\end{figure}

Our approach will often turn to decrementations of  degree sequences. The idea behind these decrementations arises from a result of Kleitman and Wang.

\begin{theorem}[{\cite[Theorem 2.1]{KlWa73}}] \label{prop: KW theorem}
Given a graphic sequence $(d_1,\dots,d_n)$, for any $k \in \{1,\dots,n\}$ there exists a graph having vertices $\{v_1,\dots,v_n\}$ in which each vertex $v_i$ has degree $d_i$, and the vertex $v_k$ is adjacent to the first $d_k$ vertices other than itself.
\end{theorem}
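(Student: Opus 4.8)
The plan is to prove Theorem~\ref{prop: KW theorem} by a standard exchange (2-switch) argument on the space of realizations of $(d_1,\dots,d_n)$, which we may assume is written in non-increasing order. Fix $k$, and let $T$ denote the set of the first $d_k$ vertices among $v_1,\dots,v_n$ other than $v_k$ itself; explicitly, $T=\{v_1,\dots,v_{d_k}\}$ if $k>d_k$ and $T=\{v_1,\dots,v_{d_k+1}\}\setminus\{v_k\}$ if $k\le d_k$. In either case $|T|=d_k$, and by the ordering every vertex of $T$ has degree at least as large as every vertex outside $T\cup\{v_k\}$. The objective is to produce a realization in which $N(v_k)=T$, since then $v_k$ is adjacent to exactly the first $d_k$ vertices other than itself.

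Among all realizations of the degree sequence, choose one, $G$, maximizing $|N(v_k)\cap T|$. If $N(v_k)=T$ we are done, so suppose not. Since $|N(v_k)|=d_k=|T|$, there is a vertex $v_i\in T$ with $v_i\notin N(v_k)$ and a vertex $v_j\notin T\cup\{v_k\}$ with $v_j\in N(v_k)$; by the ordering, $d_i\ge d_j$. The key step is to locate a vertex $w$, distinct from $v_k,v_i,v_j$, with $w\in N(v_i)$ and $w\notin N(v_j)$: such a $w$ must exist because $v_k$ lies in $N(v_j)$ but not in $N(v_i)$, so the inequality $|N(v_i)|\ge|N(v_j)|$ forces $N(v_i)$ to contain a vertex outside $N(v_j)\cup\{v_k,v_i,v_j\}$ (a short count carried out separately according to whether $v_iv_j\in E(G)$). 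Performing the 2-switch that deletes the edges $v_kv_j$ and $v_iw$ and adds the edges $v_kv_i$ and $v_jw$ yields another realization of the same degree sequence in which $v_k$ has gained the neighbor $v_i\in T$ and lost the neighbor $v_j\notin T$; hence $|N(v_k)\cap T|$ has strictly increased, contradicting the choice of $G$.

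The main obstacle — indeed essentially the only delicate point — is the existence of the exchange partner $w$ together with the verification that the indicated 2-switch is legitimate, i.e., that all four edge modifications are of the correct type and that no two of the vertices $v_k,v_i,v_j,w$ coincide. This requires splitting into the cases $v_iv_j\in E(G)$ and $v_iv_j\notin E(G)$ and carefully comparing $N(v_i)$ with $N(v_j)$; the borderline situation $d_i=d_j$ is handled by noting that $v_k$ already witnesses $N(v_i)\ne N(v_j)$. Once the exchange is justified, the extremal argument closes immediately, and the realization so obtained has $v_k$ adjacent to precisely the first $d_k$ vertices other than itself, as claimed.
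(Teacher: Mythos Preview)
Your argument is correct and is essentially the classical Kleitman--Wang/Havel--Hakimi exchange proof. Note, however, that the paper does not supply its own proof of this statement: Theorem~\ref{prop: KW theorem} is quoted from~\cite{KlWa73} and used as a black box, so there is no in-paper argument to compare yours against. Your 2-switch extremal argument is exactly the standard one from the original source; the case split on whether $v_iv_j\in E(G)$ and the observation that $v_k\in N(v_j)\setminus N(v_i)$ forces $|N(v_i)\setminus N(v_j)|\ge 1$ (or $\ge 2$ when $v_iv_j\in E(G)$, since then both $v_i$ and $v_k$ lie in $N(v_j)\setminus N(v_i)$) are handled correctly.
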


The result of Kleitman and Wang implies that we may form another graphic sequence from $(d_1, \ldots, d_n)$ by imagining the impact of deleting a vertex $v_k$ whose neighbors are as described in the theorem.

\begin{definition}
\label{def:KW_reduction}
    {\rm Let $\pi$ be a degree sequence with $\pi = (d_1, \ldots, d_n)$. For $1 \le i \le n$, the} KW-reduction at index $i$ {\rm is the degree sequence $\pi'$ obtained by deleting $d_i$ from $\pi$ and decrementing $d_i$ remaining largest terms by $1$.}
\end{definition}

For example, given $\pi = (4,3,2,2,2,1)$, the KW-reduction at index $6$ is $(3,3,2,2,2)$, and at index $2$ is $(3, 2, 1, 1, 1)$. Since $d_4 = d_5$, we finished the KW-reduction at index $2$ by reordering and reindexing the terms.  

\begin{remark}
\label{prop: KW leads to Rao-contained}
    \rm The KW-reduction at index $i$ of a graphic sequence is a special case of decrementing a sequence by its $i$th term as described in the previous section. By Lemma~\ref{lem: allowing RaoUp, RaoDown} this implies that if $\tau$ is a KW-reduction of $\pi$ at any index, then $\pi$ Rao-contains $\tau$.
\end{remark}

It will be helpful to us to examine KW-reductions of degree sequences in light of Tyshkevich decomposition. The following analysis slightly generalizes the ideas in \cite{Ba12b}, which dealt with the Havel-Hakimi algorithm. 

Given degree sequences $\pi^0, \ldots, \pi^k$ and a sequence $\pi$ that can be written as $\pi^k \circ \dots \circ \pi^1 \circ \pi^0$, let $\pi'$ be the KW-reduction of $\pi$ at an index in $\pi^j$. It is straightforward to see that \[\pi' = \pi^k \circ \dots \circ \pi^{j+1} \circ \rho \circ \pi^{j-1} \circ \dots \circ \pi^0,\] where $\rho$ is the corresponding KW-reduction on $\pi^j$. Note that no assumption is made that $\pi^0, \ldots, \pi^k$ are indecomposable. 

Indeed, whether a degree sequence is decomposable may change (in either direction) between a degree sequence and its KW-reduction. For example, if a graph $G$ contains a single dominating or isolated vertex, and the decomposition of its degree sequence $\pi$ can be written as $(0) \circ \rho$, where $\rho$ is indecomposable, then the KW-reduction at index $1$ of the decomposable sequence $\pi$ is the indecomposable sequence $\rho$. Likewise, a KW-reduction on an indecomposable sequence may be a decomposable sequence. This arises exactly from dominating, isolated, and swing vertices. 
		
\begin{theorem}
\label{prop:reduction_decomposable}
	If $\pi$ is an indecomposable sequence of length $n$, and $\pi'$ is a \emph{decomposable} sequence obtained as a KW-reduction of $\pi$, then $\pi'$ may be written as $(0) \circ \rho$ or $\rho \circ (0)$ for some graphic sequence $\rho$. 
\end{theorem}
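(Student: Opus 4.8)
The plan is to work with a realization and track what happens structurally when we delete the Kleitman--Wang vertex. Let $\pi = (d_1,\dots,d_n)$ be indecomposable, and let $\pi'$ be the KW-reduction at some index $i$. By Theorem~\ref{prop: KW theorem}, there is a realization $G$ of $\pi$ with vertex $v_i$ of degree $d_i$ adjacent to the $d_i$ highest-degree vertices other than itself; write $H = G - v_i$, so $H$ realizes $\pi'$. Since $\pi'$ is decomposable, Theorem~\ref{prop:Introduction:decomposition} gives a nontrivial Tyshkevich decomposition of $H$, and by Remark~\ref{prop:indecomposable_DIS} (applied in the contrapositive) the first step is to argue that $H$ must contain a dominating vertex, an isolated vertex, or a swing vertex — more precisely, that the \emph{leftmost} indecomposable component of $H$ is a single stable vertex ($S=\{u\}$, $K = \emptyset$) or the \emph{rightmost} is a single clique vertex ($K=\{u\}$, $S=\emptyset$). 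Equivalently, we want to show $H$ has a single vertex $u$ that is either isolated, dominating, or swing, and moreover that when we re-insert $v_i$ this same vertex $u$ retains the corresponding property in $G$'s degree-sequence decomposition, contradicting indecomposability of $\pi$ unless $\pi'$ splits off exactly a $(0)$ on one end.

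The key steps, in order. First, use the composition-vs-KW-reduction observation stated just before the theorem: if the decomposition of $\pi'$ were $\pi'^{\,k} \circ \cdots \circ \pi'^{\,0}$ with $k \ge 1$, then "undoing" the reduction — i.e., re-inserting a vertex of degree $d_i$ — should, in most cases, produce a sequence whose decomposition is obtained from that of $\pi'$ by modifying a single component and hence is still decomposable, contradicting that $\pi$ is indecomposable. The point of the theorem is to isolate the exceptional cases where this fails: the re-inserted vertex can "merge" components together only when it is adjacent either to everything outside one extreme component (so it can fold a leftmost isolated-type piece into the rest) or to nothing in one extreme component (folding a rightmost dominating-type piece in). Since $v_i$ is adjacent to precisely the $d_i$ largest-degree vertices, the structure is rigid: $v_i$ is adjacent to all of $K_k, K_{k-1}, \dots$ down through some component, and to none of the lower ones, with at most one "partial" component in between. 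Second, carry out the case analysis on where index $i$ sits among the components of $\pi$ (equivalently, the degree of $v_i$ relative to the component degree-ranges), and show that any decomposition of $\pi'$ with $k \ge 2$ forces a decomposition of $\pi$ with $\ge 1$ cut — the contradiction — so $k = 1$; then show the unique component that gets "absorbed" upon re-insertion must have been a singleton $(0)$ sitting at the far left or a singleton forming a dominating vertex at the far right, since only those can be absorbed by a vertex with the prescribed (extremal) neighborhood. Third, translate "absorbed singleton" into the stated conclusion $\pi' = (0) \circ \rho$ or $\rho \circ (0)$: if $v_i$ has low degree and absorbs a leftmost isolated vertex, then removing $v_i$ leaves that vertex isolated, giving $\pi' = (0)\circ\rho$; dually, if $v_i$ has high degree (so in the Kleitman--Wang step it grabs all the top vertices) and its removal un-dominates a rightmost vertex, we get $\rho \circ (0)$ (here one should also invoke Remark~\ref{prop:commute_complement_inverse} or Remark~\ref{prop:indecomposable_complement_inverse} to pass between the "isolated/leftmost" and "dominating/rightmost" cases by complementation, halving the work). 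A clean way to organize the dual case: apply the already-proved leftmost case to $\overline{\pi}$, noting that KW-reduction does not commute naively with complementation, so one instead argues directly that the complementary structure appears, or re-runs the same argument with "dominating" in place of "isolated."

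I expect the main obstacle to be the second step: pinning down \emph{exactly} which decompositions of $\pi'$ survive when $v_i$ is re-inserted, i.e., proving the dichotomy that a re-inserted vertex with an extremal (Kleitman--Wang) neighborhood can merge components only at the extremes and only singletons. The subtlety is that $v_i$'s neighborhood in $G$, while "the top $d_i$ vertices," need not align with component boundaries — there may be a component that $v_i$ is adjacent to only partially — and one must rule out that this partial adjacency creates a non-extremal decomposable structure in $\pi'$ whose re-merging via $v_i$ still leaves $\pi$ indecomposable. Handling this cleanly will likely require the characterization of decomposability via the Gale--Ryser / Erd\H{o}s--Gallai equality conditions (Lemma~\ref{prop:EG_nonsplit_useful} and Lemma~\ref{prop:cross_degree_GaleRyser}): one shows that a "tight" Erd\H{o}s--Gallai (or Gale--Ryser) inequality for $\pi'$ at some index $k$ that is \emph{not} at an extreme end either was already tight for $\pi$ (so $\pi$ was decomposable, contradiction) or forces the re-inserted term to be $0$ or $n-1$, i.e., $v_i$ isolated or dominating in $G$ — which, since $\pi$ is indecomposable and hence $G$ has no such vertex, is impossible. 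The bookkeeping of how the partial sums in those inequalities change under a single KW step, and verifying the equality propagates to exactly one end, is where the real work lies; everything else is routine once that dichotomy is established.
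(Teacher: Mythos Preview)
Your high-level framing matches the paper's: work with a realization $G$ of $\pi$ in which the KW vertex $v$ has the extremal neighborhood, set $H=G-v$, exploit the decomposition of $H$, and handle the dual case by complementation. But the core technical engine you propose is different. You plan to track Erd\H{o}s--Gallai/Gale--Ryser equality conditions through a single KW step and argue that a tight inequality for $\pi'$ at a non-extremal index either was already tight for $\pi$ or forces $d_i\in\{0,n-1\}$. The paper instead avoids all of that bookkeeping by arguing purely structurally: it takes the \emph{leftmost} indecomposable piece of $H$, written as $(G'[A\cup B],A,B)\circ G'[C]$ with $|A\cup B|\ge 2$, notes that indecomposability of $G$ forces $v$ to have a neighbor in $B$ or a non-neighbor in $A$, and then uses the KW property in the form ``if $\deg_{G'}(w)\ge \deg_{G'}(b)+2$ then $v\sim w$'' to show $v$ is adjacent to all of $A$, after which a short argument pins $C$ down as a set of isolated vertices. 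This gives $\rho\circ(0)$ directly; the other case is dispatched by complementing. What the paper's route buys is that the ``partial component'' subtlety you correctly flag as the main obstacle simply never arises --- the degree-gap observation handles it in one line.

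One genuine wobble in your plan: the intermediate goal ``any decomposition of $\pi'$ with $k\ge 2$ forces a decomposition of $\pi$, so $k=1$'' is not the right target and is not what the theorem asserts. The conclusion allows $\pi'$ to have arbitrarily many indecomposable components; the claim is only that an \emph{extreme} one is a singleton (equivalently, $H$ has an isolated or dominating vertex, or its rightmost component is a single vertex). Trying to force $k=1$ will fail on examples. Relatedly, your EG-equality dichotomy (``tight for $\pi'$ at a non-extremal $k$ $\Rightarrow$ tight for $\pi$ or $d_i\in\{0,n-1\}$'') is plausible but not obvious, because a KW step perturbs both sides of the $k$th inequality in a way that depends on how many of the decremented terms lie among the first $k$; you would need a careful case split on the position of the removed term and of the decremented block relative to $k$, and it is not clear this cleanly collapses to the stated dichotomy. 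If you want to push your approach through, I would recommend abandoning the EG bookkeeping and instead adopting the paper's structural step: fix the leftmost nontrivial split piece $(A,B)$ of $H$ and use the single observation that $v$ is adjacent to every vertex of $H$ whose $H$-degree exceeds that of any neighbor of $v$ by at least $2$.
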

\begin{proof}
	Let $\pi$ be an indecomposable sequence of length $n$ and let $\pi'$ be the KW-reduction of $\pi$ indexed at a term with value $t$. Let $G$ be a realization of $\pi$ having a vertex $v$ of degree $t$ with the property (guaranteed by Theorem~\ref{prop: KW theorem}) that the neighbors of $v$ have degrees as high as possible. Thus $\pi'$ is the degree sequence of $G'=G-v$, i.e., deleting $v$ from $G$ has the same effect on the degree sequence of $G$ that a KW-reduction does changing $\pi$ into $\pi'$. Suppose that $\pi'$ cannot be written as $(0) \circ \rho$ for any sequence $\rho$. We show $\pi$ can be written as $\rho \circ (0)$ for some sequence $\rho$. 
			 
	Our proof proceeds by showing that if the leftmost indecomposable part in the Tyshkevich decomposition of a graph $G'$ has more than one vertex, then restoring vertex $v$ to $G'$ (forming the graph $G$) so as to make $G$ indecomposable requires that $G'$ have a single vertex in the rightmost component of its Tyshkevich decomposition. Throughout, we notate the induced subgraph of $G$ on a set $V$ of vertices by $G[V]$ and let $\deg_G V$ denote the set $\{\deg_G (v) | v \in V\}$. For any natural number $k$, we write $\deg_G V \le k$ (respectively, $<$) if for all $v \in V$, $\deg_G (v) \le k$ (resp., $<$). 
	
	Since $G'$ is decomposable, it can be written as $(G'[A \cup B],A,B) \circ G'[C]$, where $G'[A\cup B]$ is indecomposable and has more than one vertex, and $G'[C]$ has at least one vertex.  (As in Definition \ref{def:composition}, we are specifying the $KS$-partition of $G'[A \cup B]$). This forces both $A$ and $B$ to be nonempty. Now since $G$ is indecomposable, the vertex $v$ must have a neighbor in $B$ or a non-neighbor in $A$ (otherwise, $G$ could be written as $(G[A \cup B],A,B)\circ G[C \cup \{v\}]$ for a contradiction). Suppose first that $v$ has a neighbor in $B$. Let $b$ be a neighbor of $v$ in $B$.
	
	For any $w \in V(G)$ with $\deg_{G'}(w) \ge \deg_{G'}(b) + 2$, we have $\deg_G(w) > \deg_G(b)$, and since the deletion of $V$ of $G$ corresponds to a KW-reduction on the degree sequence of $G$, we conclude that $vw \in E(G)$. Thus $v$ is adjacent to each vertex whose degree in $G'$ is at least two greater than $\deg_{G'}(b)$. Now $G'[A \cup B]$ is indecomposable, so each vertex of $A$ has a neighbor and a non-neighbor in $G$ and vice-versa. This means $\deg_{G'}(b) \le |A|-1$. Any $a \in A$ has a neighbor in $B$ and is adjacent to every vertex in $A$ and $C$, excepting itself, so $\deg_{G'}(a) \ge |A|+|C| \ge \deg_{G'}(b) + 2$. It follows that $v$ is adjacent to every vertex of $A$. 

	We now consider $v$'s adjacencies in $C$. If $v$ is adjacent to all of $C$, then we may write $G$ as $(G[A \cup B \cup\{v\}],A\cup\{v\},B) \circ G[C]$, contradicting the indecomposability of $G$. Hence $v$ has at least one non-neighbor in $C$; let $C'$ denote the set of such vertices. Since $\deg_{G'}(b) \le |A|-1$, if any vertex in $C'$ has any adjacency in $C$, then its degree is at least $|A|+1$, thus at least two greater than $\deg_{G'}(b)$, and thus the vertex in $C'$ would be adjacent to $v$, for a contradiction. Hence each vertex of $C'$ is isolated in $G[C]$. At this point we may express $G$ as $(G[A \cup B \cup C' \cup \{v\}], A \cup \{v\}, B \cup C') \circ G[C\setminus C']$. Since $G$ is indecomposable, we conclude that $C=C'$. Therefore $G[C]$ is a set of isolated vertices, and the rightmost component of the Tyshkevich decomposition of $G$ is trivial. 
			
    Now, suppose $v$ has no neighbor in $B$, and instead must have a non-neighbor $a$ in $A$. It is straightforward to verify in this case that the complements of $G$ and $G'$, along with their connection via a KW-reduction on the degree sequence of the complement of $\pi$ indexed at a term equal to $n-1-t$, satisfy the conditions of the theorem. The result follows because swing vertices are preserved by KW-reductions and complementation. 
\end{proof}

\subsection{Non-split Rao-minimal sequences} \label{subsec: nonsplit minimal}

As above, $\mathcal{L}_2$ denotes the complete list of Rao-minimal forbidden degree sequences for $HCU$. We characterize the non-split Rao-minimal sequences first.

\begin{theorem}
\label{prop:Rao_minimal_nonsplit}
The degree sequences
\[(3,3,2,2,2), \ (2,2,2,1,1), \ (2,2,1,1,1,1), \  (4,4,4,4,3,3), \ (5,4,3,3,3,1,1), \text{ and } \  (5,5,3,3,3,2,1)\] are elements of $\mathcal{L}_2$; these are exactly the Rao-minimal forbidden degree sequences of non-split graphs. 
\end{theorem}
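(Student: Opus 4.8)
The plan is to prove the two directions separately: first that each of the six listed sequences is forbidden (i.e., not realized by any graph in $HCU$) and Rao-minimal, and second that no other non-split forbidden degree sequence is Rao-minimal. For the first direction, I would rely on Remark~\ref{prop:Rao_min_indecomp_complement}: a Rao-minimal sequence must be indecomposable and come in a complement-closed (and, if split, inverse-closed) family. Observe that the six sequences pair up under complementation: $(3,3,2,2,2)$ is self-complementary among these (its complement $(2,2,2,2,2)$ is actually the degree sequence of $C_5$, which is \emph{in} $HCU$, so one must be slightly careful — let me instead note $(3,3,2,2,2)$ and $(2,2,1,1,1,1)$ and their complements all appear, as do the pairs $(2,2,2,1,1)$, $(4,4,4,4,3,3)$ and the pair $\{(5,4,3,3,3,1,1),(5,5,3,3,3,2,1)\}$ with their complements). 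So it suffices to check a representative of each complement-class. For forbiddenness, for each representative I would invoke Theorem~\ref{prop:degree_characterization_nonsplit}: a sequence is forbidden among indecomposable non-split sequences iff it fails \emph{all} of conditions (1)--(5). Each of the six is easily seen to be indecomposable (via Lemma~\ref{prop:EG_nonsplit_useful}, checking no Erd\H{o}s--Gallai inequality holds with equality at a descent below $t$, and $d_n\geq 1$, $d_1\leq n-2$), and one directly verifies that none of (1)--(5) holds: e.g.\ for $(5,4,3,3,3,1,1)$ we have $n=7$, $d_2=4\neq1$, $d_{n-1}=1\neq5=n-2$, $d_1=5\neq5=n-2$ is false so (3) fails, $d_n=1$ but $d_{n-1}=1\neq4=n-3$ so (4) fails, and it's not $(2,2,2,2,2)$.

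For Rao-minimality of each listed sequence, I must show it does not Rao-contain any \emph{other} forbidden degree sequence. By Lemma~\ref{lem: allowing RaoUp, RaoDown}, $\pi$ Rao-contains $\tau$ iff $\tau$ can be reached from $\pi$ by iterated decrementations; since all our sequences are short, this is a finite check. The cleanest route: enumerate all degree sequences Rao-contained in each of the six (equivalently, all degree sequences of induced subgraphs of all realizations — there are only finitely many realizations), and verify that every proper one is the degree sequence of a graph in $HCU$. In practice I would argue that decrementing any of these six sequences by one of its terms always yields either a split sequence, or a decomposable sequence, or a non-split indecomposable sequence satisfying one of (1)--(5); Theorems~\ref{prop:degree_characterization_nonsplit} and~\ref{prop:degree_characterization_split} together with Lemma~\ref{prop:HCU_closed_degree_components} then certify $HCU$-membership. (For the split cases arising as decrementations one also invokes Theorem~\ref{prop:degree_characterization_split}.) This is somewhat tedious but entirely mechanical.

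For the second direction — that these six are \emph{all} the non-split Rao-minimal forbidden sequences — I would argue by contradiction. Let $\pi=(d_1,\dots,d_n)$ be a non-split Rao-minimal forbidden sequence not among the six. By Remark~\ref{prop:Rao_min_indecomp_complement}, $\pi$ is indecomposable, and $\pi$ is forbidden. Because $\pi$ is Rao-minimal, every KW-reduction of $\pi$ is non-forbidden, i.e.\ is the degree sequence of a graph in $HCU$ (Remark~\ref{prop: KW leads to Rao-contained}); moreover, by Theorem~\ref{prop:reduction_decomposable}, any KW-reduction that is decomposable has the form $(0)\circ\rho$ or $\rho\circ(0)$, and by Lemma~\ref{prop:HCU_closed_degree_components} the single-vertex component forces $\rho$ itself to be a non-forbidden sequence. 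The strategy is then: since $\pi$ fails all of (1)--(5) of Theorem~\ref{prop:degree_characterization_nonsplit}, certain structural features must be present (e.g.\ $d_2\geq2$, $d_{n-1}\leq n-3$, and either $d_1\leq n-3$ or $d_2\geq3$, etc.), and I would show that under these constraints one can always choose an index $i$ so that the KW-reduction $\pi'$ of $\pi$ at $i$ is a non-split indecomposable forbidden sequence (contradicting Rao-minimality) unless $n$ is small and $\pi$ is one of the six. Equivalently, I'd show that if $\pi$ is "large" (say $n\geq 8$, or $n=7$ with large entries) then some decrementation stays non-split, stays indecomposable, and still fails (1)--(5), so $\pi$ is not Rao-minimal; this reduces the problem to a finite search over small $n$, which I would then carry out (possibly citing the computational search of Section~7), landing exactly on the six sequences.

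The main obstacle, and the heart of the argument, is this reduction step: controlling how decrementation interacts simultaneously with three properties — being non-split, being indecomposable, and failing all of conditions (1)--(5). Decrementing can break indecomposability (handled by Theorem~\ref{prop:reduction_decomposable}, which luckily pins the damage to a trivial end component) or turn a non-split sequence split, and one must exhibit, for each "large" $\pi$, a specific term to decrement by that avoids both pitfalls while preserving forbiddenness. I expect this to require a careful case analysis organized by which of (1)--(5) are "barely" satisfied or violated — for instance, distinguishing whether $\pi$ has many $1$'s, whether $d_1$ is close to $n-2$, and whether there is a large gap between consecutive degrees — mirroring the structure of the proof of Theorem~\ref{prop:degree_characterization_nonsplit}. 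The split-sequence analogue (Theorem~\ref{prop:Rao_minimal_split}) presumably handles the cases where decrementation does escape to a split sequence, so the two theorems should be proved in tandem or with cross-references.
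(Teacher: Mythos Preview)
Your overall strategy matches the paper's: verify the six sequences directly, then show every other non-split forbidden indecomposable sequence Rao-contains a shorter one via a suitable KW-reduction. Two points deserve correction, however.

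First, a small slip: the complement of $(3,3,2,2,2)$ is $(2,2,2,1,1)$, not $(2,2,2,2,2)$; the six sequences form three complementary pairs, $\{(3,3,2,2,2),(2,2,2,1,1)\}$, $\{(2,2,1,1,1,1),(4,4,4,4,3,3)\}$, and $\{(5,4,3,3,3,1,1),(5,5,3,3,3,2,1)\}$.

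Second, and more substantively, your final paragraph anticipates needing Theorem~\ref{prop:Rao_minimal_split} to handle decrementations that ``escape'' to split sequences. In fact the non-split case is self-contained. The paper observes that if $\pi$ is non-split and indecomposable, and a KW-reduction $\pi'$ has no term equal to $0$ or $n-2$ and satisfies none of (1)--(5) of Theorem~\ref{prop:degree_characterization_nonsplit}, then $\pi'$ is automatically non-split: were $\pi'$ split with $KS$-partition $K'\cup S'$, one could reinsert the deleted vertex (adjacent to the $d_i$ highest-degree vertices, hence either entirely within $K'$ or to all of $K'$ plus some of $S'$) and obtain a split realization of $\pi$, a contradiction. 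This lemma-within-the-proof is what makes the non-split argument close on itself and yields Corollary~\ref{prop:nonsplit_contains_nonsplit}.

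Beyond that, where you say ``choose an index $i$,'' the paper commits to the KW-reduction at index $2$ (and then, for the residual cases, simultaneously at index $2$ of $\overline{\pi}$). This specific choice converts the hypothesis ``$\pi'$ satisfies one of (1)--(5) or has a $0$ or $n-2$ term'' into a short explicit list of shapes for $\pi$ (the paper's conditions (i-a)--(vii)), each of which is dispatched directly; no separate ``finite search for small $n$'' is needed.
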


\begin{proof}

In this proof, we call a degree sequence a \emph{target sequence} if it is indecomposable, forbidden, and non-split. By Lemma~\ref{prop:EG_nonsplit_useful} and Theorem~\ref{prop:degree_characterization_nonsplit}, the degree sequences listed in the theorem are target sequences. By Remark \ref{prop:Rao_min_indecomp_complement}, non-split Rao-minimal sequences must be target sequences, so it suffices to prove that of all target sequences, exactly those listed in Theorem \ref{prop:Rao_minimal_nonsplit} are Rao-minimal. The graphs $C_4$ and $2K_2$ are the only two non-split graphs on four or fewer vertices, and both are included in $HCU$. This implies that $(3,3,2,2,2)$ and $(2,2,2,1,1)$ are Rao-minimal. The other four sequences listed in the theorem do not Rao-contain any smaller target sequences, so they are also Rao-minimal. 

We proceed inductively, showing that for any target sequence $\pi$ on six or more vertices not listed in Theorem \ref{prop:Rao_minimal_nonsplit}, there exists a degree sequence $\pi'$ on one or two fewer vertices such that $\pi'$ is a target sequence and some realization of $\pi'$ is induced in some realization of $\pi$. Hence $\pi$ is not Rao-minimal.

Let $\pi$ be a target sequence of length $n \ge 6$ other than those listed in Theorem \ref{prop:Rao_minimal_nonsplit}. First suppose that there exists $i \in \{1, \ldots, n\}$ for which the KW-reduction at index $i$ satisfies none of the five conditions in Theorem \ref{prop:degree_characterization_nonsplit} and contains no terms equal to $0$ or $n-2$. Let $\pi'$ be this KW-reduction. By Remark~\ref{prop: KW leads to Rao-contained}, $\pi'$ is Rao-contained by $\pi$, and by Theorem \ref{prop:reduction_decomposable}, $\pi'$ is indecomposable. 

Suppose for a contradiction that there exists an (indecomposable) split graph $G'$ realizing $\pi'$, with $KS$-partition $K' \cup S'$. Let $G$ be the graph created from $G'$ by adding a vertex $v$ adjacent to $d_i$ vertices $v_1, \ldots, v_{d_i}$ of highest degree in $G'$. Since $\pi'$ is a KW-reduction of $\pi$ at index $i$, it follows that $G$ is a realization of $\pi$. In an indecomposable split graph, the clique vertices have greater degree than the stable set vertices. If $d_i \le |K'|$, then $v$ is adjacent only to clique vertices, so $G$ is a split graph with clique $K'$ and stable set $S' \cup \{v\}$. If $d_i > |K'|$, then $v$ is adjacent to all clique vertices and some stable set vertices, so $G$ is a split graph with clique $K' \cup \{v\}$ and stable set $S'$. Either way, we conclude that $G$ is split. 

Hence $\pi'$ is a target sequence Rao-contained by $\pi$, and we conclude that $\pi$ is not Rao-minimal. Therefore, as claimed, if for some $i \in \{1,\dots,n\}$ the KW-reduction $\pi'$ at index $i$ has no terms equal to $0$ or $n-2$ and satisfies none of the five conditions in Theorem \ref{prop:degree_characterization_nonsplit}, then $\pi$ is not Rao-minimal.

We now consider the remaining possibility, i.e., what occurs if for all $i$, $1 \le i \le n$, the KW-reduction of $\pi$ at index $i$ satisfies one of the five conditions in Theorem \ref{prop:degree_characterization_nonsplit} or contains a term equal to $0$ or $n-2$. We will show that $\pi$ Rao-contains an element of the list $\mathcal{L}_2$ in the statement of Theorem~\ref{prop:Rao_minimal_nonsplit}.

Our assumption about the KW-reductions certainly pertains to the KW-reduction at index $2$, and we begin our analysis here. Since $\pi$ is indecomposable, $1 \leq d_j \leq n-2$ for all $j \in \{1,\dots,n\}$. Let $\pi'$ be the KW-reduction at index $2$, and let $x=d_2$ denote the removed term. If $x = 1$, then $\pi$ satisfies condition (1) of Theorem~\ref{prop:degree_characterization_nonsplit}, a contradiction; hence $x \geq 2$, and the last term of $\pi$ to be reduced by 1 is always equal to $d_{x+1}$ (because of the removal of the term $d_2$ in the KW-reduction). Recall that $n \geq 6$ and that the terms of $\pi$ are assumed to be in non-increasing order.

By the present assumptions, $\pi'$ satisfies one of the conditions (1)-(5) from Theorem~\ref{prop:degree_characterization_nonsplit}, or $\pi'$ has a term equal to $0$ or $n-2$. We claim that statements (i) through (vii) below convert these possibilities into conditions on the sequence $\pi$. 

\medskip
\begin{paracol}{2}
    \begin{enumerate}
        \item [\rm{(i-a)}] 
        $d_3 = \cdots = d_{x+1}=2$ and $d_{x+2}= \cdots = d_n = 1$.
        \item [\rm{(i-b)}] 
        $d_1 = \cdots = d_4 = 2$ and $d_5 = \cdots = d_n = 1$.
        \item [\rm{(ii-a)}]  
        $d_1 = \cdots = d_{x+1} = n-2$ and $d_{x+2} = \cdots = d_{n-1} = n-3$.
        \item [\rm{(ii-b)}]
        $d_1 = \cdots = d_{n-2} = n-2$ and $d_{n-1} = d_n = n-3$.
        \item [\rm{(iii-a)}] 
        $d_1 = n-2, d_3 = 3, $ and $d_{x+2} \le 2$. 
        
        \switchcolumn
        \item [\rm{(iii-b)}] 
        $d_1 = n-2, d_2 \ge 3$ and $d_3 = \cdots = d_{x+2} = 2$.
        \item [\rm{(iv)}] 
        $d_2 = \cdots = d_{n-2} = n-3$, $d_{n-1} = n-4$, and $d_n = 1$.
        \item[\rm{(v)}] $\pi = (3,3,3,3,2,2)$
        \item [\rm{(vi)}] 
        $d_{x+1} = \cdots = d_n =1$.
        \item [\rm{(vii)}] 
        $d_1 = \cdots = d_{x+2}=n-2$.
    \end{enumerate}
    \end{paracol}

\medskip
It is straightforward to verify that conditions (vi) and (vii) are respectively equivalent to the sequence $\pi'$ containing a term equal to $0$ or to $n-2$. We further claim that each of the conditions (1)--(5) from Theorem~\ref{prop:degree_characterization_nonsplit} implies at least one of the conditions (i-a)--(vii), most often one having a corresponding Roman numeral. Conversely, we claim that (1)--(5) are each implied by the statement or disjunction of statements listed above having the same Roman numeral.

We prove a representative case and leave verification of the others to the reader. Observe that if $\pi$ satisfies (i-a) or (i-b), then $\pi'$ satisfies condition (1). Conversely, suppose that $\pi'$ satisfies (1). Since forming $\pi'$ involves removing the second term of $\pi$, the second term of $\pi'$ is equal to \[\left\{\begin{matrix*}[l] d_3-1 & \text{ if } \ d_3>d_{x+2} & \text{Example: \ } \pi = (2,2,2,1,1);\\
d_3 & \text{ if } \ d_1>d_3 = d_{x+2} & \text{Example: \ }\pi = (2,2,1,1,1,1);\\
d_3-1 & \text{ if } \ d_1=d_3 =d_{x+2}>d_{x+3} & \text{Example: \ } \pi = (2,2,2,2,1,1);\\
d_3 & \text{ if } \ d_1=d_3 = d_{x+2}=d_{x+3} & \text{Example: \ } \pi = (1,1,1,1,1,1).
\end{matrix*}\right.\] Condition (1) requires that the second term of $\pi'$ be 1, so in the first of the cases above, $d_3=2$ and $d_{x+2}=1$, yielding (i-a) or (vi). In the second case, $d_3=\dots=d_n=1$, so (vi) holds, and in the third case, $d_1=x=d_3=2$, and (i-b) holds. The fourth case leads to a contradiction, since then $d_2=1$ and $\pi$ is not a target sequence by Theorem~\ref{prop:degree_characterization_nonsplit}. The other verifications are similar.

Hence $\pi$ satisfies one of the ten conditions (i-a) through (vii) listed above, and we will consider each one. In most cases a condition from this list quickly leads to a contradiction or to the conclusion that $\pi$ Rao-contains an element of the list $\mathcal{L}_2$.

If $\pi$ satisfies (i-a), then it has the form $(d_1,x,2,...,2,1,...1)$, where there are $x-1$ terms equal to $2$ following $x$. We show by induction that all such sequences of length at least 5 Rao-contain $(2,2,2,1,1)$. When the length is 5, parity considerations show that $(2,2,2,1,1)$ is the only sequence satisfying (i-a). Now assume that $n>5$ and that every target sequence of length at most $n-1$ that satisfies (i-a) Rao-contains $(2,2,2,1,1)$. If $d_1=2$, then $\pi$ clearly Rao-contains $(2,2,2,1,1)$. If $d_1>2$ and $x=2$, then $\pi$ has multiple terms equal to 1, and the KW-reduction at index $n$ on $\pi$ does not change any term from $\pi$ but the first term and the omitted last term; then this KW-reduction satisfies (i-a), and by the induction hypothesis and the transitivity of Rao-containment, $\pi$ Rao-contains $(2,2,2,1,1)$. Finally, if $x>2$, then the fourth term of $\pi$ equals 2; here the KW-reduction at index $3$ reduces the first two terms of $\pi$ by 1 and leaves later remaining terms unchanged, so this KW-reduction again satisfies (i-a), and we conclude that $\pi$ Rao-contains $(2,2,2,1,1)$.

If $\pi$ satisfies (i-b), then it has the form $(2,2,2,2,1,\dots,1)$, where we may assume that there are at least two 1 terms. It is straightforward to find a realization of $\pi$ that contains $P_5$ as an induced subgraph, so $\pi$ Rao-contains $(2,2,2,1,1)$.

The sequence $\pi$ cannot satisfy (ii-a): here $x=n-2$, forcing $d_{n-1}$ to equal both $n-2$ and $n-3$. 

Suppose that $\pi$ satisfies (ii-b). Note that $n$ must be even, and $\pi$ has as one of its realizations the complement of $P_4+(n/2 - 2) K_2$, which has the complement of $P_4+K_2$ as an induced subgraph. It follows that $\pi$ Rao-contains $(4,4,4,4,3,3)$. 

We will return to conditions (iii-a) and (iii-b) later. We next show that if any target sequence of length at least 5 satisfies (iv), then the sequence Rao-contains $(2,2,2,1,1)$. We use induction on the length, noting that $(2,2,2,1,1)$ itself satisfies (iv). Fix $n \geq 6$, and assume that this claim holds for all sequences satisfying (iv) with length between $5$ and $n-1$. If $n$ is even, then parity requires that $d_1=n-2$, and performing the KW-reduction at index $1$ on $\pi$ results in a sequence of smaller length that also satisfies (iv) and hence Rao-contains $(2,2,2,1,1)$. If $n$ is odd, then $d_1=n-3$. Consider the degree sequence $\rho$ of length $n-1$ satisfying (iv). A realization of $\pi$ may be formed by adding a vertex to a realization $G'$ of $\rho$ that is adjacent exactly to all vertices of $G'$ except for the vertex of maximum degree and the vertex of minimum degree. It follows that $\pi$ Rao-contains $\rho$, and by transitivity and the induction hypothesis, $\pi$ Rao-contains $(2,2,2,1,1)$. 

If $\pi$ satisfies (v) - that is, $\pi = (3,3,3,3,2,2)$ - then $\pi$ Rao-contains $(3,3,2,2,2)$, and our conclusion holds. 

We also delay handling condition (vi) momentarily. If $\pi$ satisfies (vii), then $x=n-2$ and in fact every term of $\pi$ is $n-2$. This is a contradiction, since by condition (2) of Theorem~\ref{prop:degree_characterization_nonsplit}, $\pi$ would not be forbidden.

It remains to show that our target sequence $\pi$ Rao-contains an element of $\mathcal{L}_2$ when $\pi$ satisfies one of the conditions (iii-a), (iii-b), or (vi). To assist us, we consider the ``complementary'' degree sequence $\overline{\pi} = (n-1-d_n, n-1-d_{n-1},\dots,n-1-d_1)$ with $i$th term $n-1-d_{n+1-i}$. Since the graph properties of being indecomposable, forbidden, and non-split are all preserved under graph complementation, the arguments thus far in the proof also apply to $\overline{\pi}$. If the forbidden sequence $\overline{\pi}$ Rao-contains an element of $\mathcal{L}_2$, then the same must be true for $\pi$ by Remark~\ref{prop:Rao_min_indecomp_complement}. Hence we may assume that $\overline{\pi}$ satisfies a requirement analogous to conditions (iii-a), (iii-b), or (vi). This implies that $\pi$ satisfies one of the additional conditions listed below, where $y=d_{n-1}$. Note that the second term $\overline{x}$ of $\overline{\pi}$ equals $n-1-y$ (so for instance the term of $\overline{\pi}$ with index $\overline{x}+2$ is equal to $n-1-d_{n+1-(\overline{x}+2)} = n-1-d_{n+1-(n-1-y)-2} = n-1-d_y$). We may assume that $y \leq n-3$ since otherwise condition (2) of Theorem~\ref{prop:degree_characterization_nonsplit} would imply that $\pi$ is not forbidden.

\begin{enumerate}
    \item[(iii-a$'$)] 
    $d_n = 1$, $d_{n-2} = n-4$, and $d_{y} \geq n-3$;
    \item[(iii-b$'$)]  
    $d_n = 1$, $d_{n-1} \leq n-4$, and $d_{y}=\dots=d_{n-2} = n-3$;
    \item[(vi$'$)] 
    $d_{1} = \dots = d_{y+1} = n-2$.
\end{enumerate}

Suppose first that $\pi$ satisfies condition (iii-a). If $\pi$ also satisfies (iii-a$'$), then $3=d_3 \geq d_{n-2}=n-4$, so $n \leq 7$. The only non-increasing, graphic sequences of length 6 or 7 satisfying these conditions are $(4,3,3,2,1,1)$, $(4,4,3,2,2,1)$, $(5,4,3,3,3,1,1)$, and $(5,5,3,3,3,2,1)$. The first two are split sequences, contrary to our assumption on $\pi$. The last two are elements of $\mathcal{L}_2$. 

If instead $\pi$ satisfies (iii-a) and (iii-b$'$), then since $d_3=3 \leq n-3 = d_{n-2} \leq 3$, we have $n=6$. The only non-increasing, graphic sequence of length 6 satisfying these conditions is $(4,3,3,3,2,1)$, which Rao-contains $(2,2,2,1,1)$.

Suppose that $\pi$ satisfies (iii-a) and (vi$'$). If $y>1$ then $3=d_3 = n-2$ and $n=5$, contrary to our assumption. With $y=1$ we see that $d_1=d_2=n-2$ and $d_{n-1}=d_n = 1$. If $G$ is any realization of $\pi$, then each vertex of $G$ of degree $n-2$ must be adjacent to at least one vertex of degree 1; the only way for this to happen is if the two vertices with degrees $d_1$ and $d_2$ are both adjacent to all other vertices in the graph. This is a contradiction, for then $G$ and hence $\pi$ are decomposable in the Tyshkevich decomposition; we may write $\pi = (2,2,1,1) \circ \pi^*$, where $\pi^*$ is the degree sequence of the induced subgraph resulting from deleting two vertices of maximum degree and two of minimum degree from $G$. Hence $\pi$ cannot satisfy (iii-a) and (vi$'$).

If $\pi$ satisfies (iii-b) and (iii-b$'$), then a contradiction arises, as $2 = d_3 \geq d_{n-2}=n-3$. 

Suppose that $\pi$ satisfies (iii-b) and (vi$'$). Since $d_{y+1}= n-2 > 2=d_3$, we would need $y=1$ and hence $x=d_2=n-2$, but then $d_n = d_{x+2} = 2 > 1 = d_{n-1}$, a contradiction.

Finally, suppose that $\pi$ satisfies (vi) and (vi$'$). Here $x=n-2$, so $\pi$ satisfies $d_1=d_2=n-2$ and $d_{n-1}=d_n=1$. As before, this is a contradiction, since $\pi$ is then decomposable. 

Note now that if $\pi$ satisfies (iii-b) and (iii-a$'$), (vi) and (iii-a$'$), or (vi) and (iii-b$'$), then the arguments of the preceding paragraphs, applied appropriately to the degree sequence $\overline{\pi}$, either end in contradiction or show that $\overline{\pi}$ (and hence $\pi$) Rao-contains a degree sequence from $\mathcal{L}_2$. This completes the proof of Theorem~\ref{prop:Rao_minimal_nonsplit}.
\end{proof}

In the proof of the theorem, we show that every non-minimal target sequence can be reduced to another \textit{non-split} sequence, and none Rao-contain only a split Rao-minimal sequence. We record this as a corollary. 

\begin{corollary}
\label{prop:nonsplit_contains_nonsplit}
    Let $\pi$ be an indecomposable, forbidden, non-split degree sequence. Then $\pi$ Rao-contains a non-split element of $\mathcal{L}_2$. 
\end{corollary}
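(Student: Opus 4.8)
The plan is to read the statement off the proof of Theorem~\ref{prop:Rao_minimal_nonsplit}, reusing its terminology: a \emph{target sequence} is an indecomposable, forbidden, non-split degree sequence, which is exactly the hypothesis on $\pi$. Unwinding the definition of $\mathcal{L}_2$, the non-split members of $\mathcal{L}_2$ are precisely the six sequences listed in Theorem~\ref{prop:Rao_minimal_nonsplit}, so I must show that every target sequence Rao-contains one of those six. I would do this by strong induction on the length $n$ of $\pi$.

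For the base of the induction, I would note first that every graph on at most four vertices is a unigraph, so no degree sequence of length $\le 4$ is forbidden; hence every target sequence has length $\ge 5$. Then I would check the finitely many indecomposable, non-split, graphic sequences of length $5$ whose terms all lie in $\{1,\dots,n-2\}$ and observe that the only forbidden ones are $(2,2,2,1,1)$ and $(3,3,2,2,2)$, each of which (trivially) Rao-contains itself. This settles lengths $\le 5$.

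For the inductive step I would take a target sequence $\pi$ of length $n \ge 6$ and assume the claim for all shorter target sequences. If $\pi$ is one of the six listed sequences we are done, so assume it is not. The proof of Theorem~\ref{prop:Rao_minimal_nonsplit} then dichotomizes on the KW-reductions of $\pi$. In the first case some KW-reduction $\pi'$ of $\pi$ fails all of conditions (1)--(5) of Theorem~\ref{prop:degree_characterization_nonsplit} and has no term equal to $0$ or $n-2$; the proof there shows — via Theorem~\ref{prop:reduction_decomposable}, the ``add the deleted vertex back to a split realization'' argument, and Theorem~\ref{prop:degree_characterization_nonsplit} — that $\pi'$ is again a target sequence, now of length $n-1$, and that $\pi \succeq \pi'$ by Remark~\ref{prop: KW leads to Rao-contained}. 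The induction hypothesis then gives a non-split $\sigma \in \mathcal{L}_2$ with $\pi' \succeq \sigma$, and transitivity of $\preceq$ yields $\pi \succeq \sigma$. In the complementary case, every KW-reduction of $\pi$ satisfies one of (1)--(5) or has a term equal to $0$ or $n-2$; the case analysis in the proof of Theorem~\ref{prop:Rao_minimal_nonsplit} (possibly after replacing $\pi$ by $\overline{\pi}$, which is again a target sequence, and invoking Remark~\ref{prop:Rao_min_indecomp_complement}) exhibits an element of $\mathcal{L}_2$ — necessarily one of the six non-split sequences of Theorem~\ref{prop:Rao_minimal_nonsplit} — that $\pi$ Rao-contains. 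Either way the induction closes.

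The one point that requires genuine attention, rather than abstract well-foundedness, is that the Rao-smaller sequence reached at each step is itself \emph{non-split}: in the first case this is exactly the split-realization contradiction already carried out in the proof of Theorem~\ref{prop:Rao_minimal_nonsplit}, and in the second case it is automatic, since the sequences produced there are among the six non-split members of $\mathcal{L}_2$. I do not anticipate any obstacle beyond pinning down these two citations; all the remaining work — the length bookkeeping, the reductions, the handling of $\overline{\pi}$ — has already been done inside the proof of Theorem~\ref{prop:Rao_minimal_nonsplit}.
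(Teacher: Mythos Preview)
Your proposal is correct and is exactly how the paper obtains the corollary: the paper simply notes that the proof of Theorem~\ref{prop:Rao_minimal_nonsplit} already shows every target sequence is either one of the six listed non-split sequences or Rao-contains a shorter target sequence (which is still non-split by the argument you cite), and records the inductive consequence as Corollary~\ref{prop:nonsplit_contains_nonsplit}. You have spelled out the induction and the two cases more explicitly than the paper does, but the content is identical.
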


\subsection{Split Rao-minimal sequences}

We now characterize the Rao-minimal split degree sequences, which are all of length $8$. 

\begin{theorem}
	\label{prop:Rao_minimal_split}
	The degree sequences
	
	\smallskip
	\noindent
	\hfil \begin{tabular}{llll}
	 $(5, 5, 4, 4, 2, 2, 1, 1)$, & $(5, 4, 4, 2, 2, 1, 1, 1)$, & $ (5, 5, 4, 2, 2, 2, 1, 1)$, & $(5, 5, 5, 4, 2, 2, 2, 1)$, \\
	 $(5, 5, 5, 5, 2, 2, 2, 2)$, & $(6, 5, 4, 4, 3, 2, 1, 1)$, & $(6, 5, 5, 4, 3, 2, 2, 1)$, & $(6, 5, 5, 4, 3, 3, 1, 1)$, \\
	 $(6, 5, 5, 5, 3, 2, 2, 2)$, & $(6, 6, 4, 4, 3, 2, 2, 1)$, & $(6, 6, 5, 4,3, 3, 2, 1)$, & $(6, 6, 5, 5, 3, 3, 2, 2)$, \\
	 $(6, 6, 5, 5, 5, 3, 2, 2)$, & $(6, 6, 6, 5, 5, 3, 3, 2)$ & & 
	 \end{tabular} \hfil
 	
 	\smallskip
	\noindent are elements of $\mathcal{L}_2$; these are exactly the Rao-minimal forbidden degree
	 sequences of split graphs.
\end{theorem}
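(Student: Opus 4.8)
The plan is to adapt the argument of Theorem~\ref{prop:Rao_minimal_nonsplit} to split graphs, working wherever convenient with cross-degree sequence pairs. Call a graphic sequence a \emph{split target sequence} if it is indecomposable, forbidden, and split; by Lemma~\ref{prop:cross_degree_GaleRyser} together with the cross-degree form of Theorem~\ref{prop:degree_characterization_split}, this property can be read off the cross-degree sequence pair $\{\alpha,\beta\}$ directly. Two observations set the stage. Since split graphs form a hereditary class and split-ness is determined by degree sequence, every sequence Rao-contained in a split sequence is itself split; hence the split Rao-minimal forbidden sequences are exactly the Rao-minimal split target sequences, and moreover every KW-reduction of a split sequence $\pi$ is again split, being the degree sequence of $G-v$ for a split realization $G$ of $\pi$. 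Also, by Remark~\ref{prop:Rao_min_indecomp_complement}, the list we seek is closed under complementation, inversion, and inverse-complementation, so in the case analysis we may freely pass to any of these three companions of $\pi$.

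The base cases are handled by a finite verification: using Lemma~\ref{prop:cross_degree_GaleRyser} for indecomposability and Theorem~\ref{prop:degree_characterization_split} for forbiddenness, one checks that no indecomposable split sequence of length at most $7$ is forbidden, and that the fourteen sequences in the statement are exactly the split target sequences of length $8$. Since no split target sequence is shorter than these and each Rao-contains only split sequences, the fourteen are automatically Rao-minimal; it remains to show that every split target sequence of length $n\ge 9$ fails to be Rao-minimal.

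So fix such a $\pi$. As in the non-split proof, first ask whether some KW-reduction $\pi'$ is again a split target sequence: it is automatically split; it is indecomposable unless it is decomposable in the restricted form $(0)\circ\rho$ or $\rho\circ(0)$ guaranteed by Theorem~\ref{prop:reduction_decomposable}, which is recognizable from $\pi'$ itself; and it is forbidden exactly when it satisfies none of the eight conditions of Theorem~\ref{prop:degree_characterization_split}. If some KW-reduction passes all three tests, then by Remark~\ref{prop: KW leads to Rao-contained} it is a strictly shorter split target sequence Rao-contained in $\pi$, so $\pi$ is not Rao-minimal and we are finished.

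The remaining, decisive case is that every KW-reduction of $\pi$ either satisfies one of the eight conditions (1a)--(4b) or is decomposable. Here I would mimic the passage from conditions (1)--(5) to conditions (i-a)--(vii) in the non-split proof: it suffices to analyze the KW-reductions corresponding to deleting a clique vertex of second-highest cross-degree and, symmetrically, a stable-set vertex of second-highest cross-degree, translating the constraint that these reductions be non-forbidden into a short list of shape conditions on $\{\alpha,\beta\}$ itself, and then intersecting each with the analogous conditions forced on the complement and the inverse of $\pi$. Each surviving combination should either be self-contradictory once $n\ge 9$ (pinning the length down to be small, as happens repeatedly in the non-split case) or force $\pi$ into one of a small number of explicit shapes, for each of which a realization visibly contains a realization of one of the fourteen length-$8$ sequences as an induced subgraph, so that $\pi$ Rao-contains a listed sequence. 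I expect essentially all of the labor to lie in this enumeration: the complement/inverse symmetry keeps the number of genuinely distinct cases modest, but carrying out the bookkeeping so that no combination escapes at length $n\ge 9$ without forcing such a Rao-containment is the main obstacle.
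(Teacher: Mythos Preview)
Your setup, base case, and overall inductive framework match the paper's approach: work with cross-degree sequence pairs, verify that no split target pairs exist below length $8$, identify the fourteen length-$8$ pairs, and then show every longer split target pair Rao-contains a shorter one via a KW-type reduction. The observation that all sequences Rao-contained in a split sequence are split, and that the list is closed under complementation and inversion, is also used.

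Where your plan diverges from the paper is in the ``decisive case.'' You propose to analyze only two specific reductions (second-highest clique cross-degree and second-highest stable cross-degree), translate each into shape conditions on $\{\alpha,\beta\}$, and intersect. The paper does something structurally different: it assumes \emph{every} reduction $R_{a_i,\alpha}$ lands in $HCU$ or is decomposable, extracts from this a list of fourteen conditions (I.a)--(VI) that must hold for \emph{some} index $i$, and then marches through these conditions one at a time, in each case choosing a \emph{tailored} further reduction (variously $R_{a_n,\alpha}$, $R_{a_2,\alpha}$, $R_{a_3,\alpha}$, $R_{a_{n-1},\alpha}$, $R_{b_m,\beta}$, depending on the case) whose result can be shown to be a target pair or to Rao-contain a listed pair. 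The choice of reduction index is case-dependent and not confined to the second position.

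This is a genuine gap in your plan rather than merely a stylistic difference. Two specific points: first, the paper's final case---where only conditions (V) and (VI) survive for every index---is handled not by any single reduction at all but by a direct contradiction obtained from the Gale--Ryser inequalities (Lemma~\ref{prop:graphic_Gale_Ryser}) applied at carefully chosen indices, and nothing in your outline anticipates needing this. Second, several of the paper's cases require reducing at an extremal index (e.g.\ $a_n$ or $b_m$) precisely because reduction at a ``second-highest'' index fails to preserve forbiddenness there. Your plan to restrict to two fixed reduction positions and intersect may conceivably be salvageable, but it is not what works in the paper, and you have not given evidence that it suffices; at minimum you should expect to need additional reductions and the Gale--Ryser argument.
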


Our arguments proving Theorem~\ref{prop:Rao_minimal_split} will be similar to those in Section~\ref{subsec: nonsplit minimal}, though here we will replace degree sequences by cross-degree sequence pairs in order to take advantage of Theorem \ref{prop:degree_characterization_split}. Given two cross-degree sequence pairs $\crosspair$ and $\redpair$, we define $\crosspair$ to be \emph{Rao-contained by $\redpair$} if some realization of $\crosspair$ is an induced subgraph of some realization of $\redpair$. Equivalently, $\crosspair$ is Rao-contained by $\redpair$ if any degree sequence corresponding to $\crosspair$ is Rao-contained by some degree sequence corresponding to $\redpair$. A cross-degree sequence pair $\crosspair$ is \emph{forbidden} or \emph{Rao-minimal} if a degree sequence corresponding to $\crosspair$ is forbidden or Rao-minimal, respectively, and $\crosspair$ is a \emph{target pair} if any (and hence all) of its corresponding degree sequences is indecomposable and forbidden. Finally, if $\alpha=(a_1,\dots,a_n)$ and $\beta=(b_1,\dots,b_m)$, define the \emph{length} of $\crosspair$ as $n+m$, the total number of terms within the two sequences.

The degree sequences in Theorem \ref{prop:Rao_minimal_split} correspond to the following cross-degree sequence pairs. Since cross-degree sequence pairs are unordered, there exist three instances of pairs of degree sequences above mapping to the same cross-degree sequence pair. 
\begin{equation}
	\begin{matrix}
		\{(2,2,1,1,1),(3,2,2)\}, & \{(2,2,2,1,1),(3,3,2)\}, & \{(2,2,1,1),(2,2,1,1)\}, & \{(3,2,1,1),(3,2,1,1)\},\\
	\{(2,2,2,1),(2,2,2,1)\}, & \{(2,2,2,2),(2,2,2,2)\}, & \{(3,2,2,1),(3,2,2,1)\}, & \{(3,3,1,1),(3,2,2,1)\}, \\
	\{(3,2,2,2),(3,2,2,2)\}, & \{(3,3,2,1),(3,3,2,1)\}, & \{(3,3,2,2),(3,3,2,2)\}. &
	\end{matrix}\tag{$*$}\label{eq: split cross degree lists}
\end{equation}

\noindent It is not hard to show by Lemma \ref{prop:cross_degree_GaleRyser} and Theorem \ref{prop:degree_characterization_split} that each of these is a target pair.

In preparation for the proof of Theorem~\ref{prop:Rao_minimal_split}, we note when an unordered pair of sequences is the cross-degree sequence pair of a split graph. This lemma, due independently to Gale and Ryser, was originally proven in the context of bipartite graphs. 

\begin{lemma}[\cite{Ga57,Ry57}]
	\label{prop:graphic_Gale_Ryser}
	Let $\alpha = (a_1, \ldots, a_n)$ and $\beta = (b_1, \ldots, b_m)$ be two non-increasing, non-negative sequences. The unordered pair $\crosspair$ is the cross-degree sequence pair of a split graph if and only if $\crosspair$ satisfies the $k$th Gale--Ryser inequality given in Lemma \ref{prop:cross_degree_GaleRyser} for all $1 \le k \le n$. 
\end{lemma}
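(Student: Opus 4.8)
The plan is to recognize this lemma as a direct translation of the classical bipartite Gale--Ryser theorem, the only genuine content being an observation about the rigid structure of split graphs. Fix a split graph $G$ with $KS$-partition $K \cup S$. Every pair of clique vertices is adjacent and no pair of stable-set vertices is adjacent, so these adjacencies are completely determined; the only freedom in $G$ lies in the choice of edges with one endpoint in $K$ and one in $S$. These cross edges form a bipartite graph with parts $K$ and $S$, and by Definition~\ref{def:cross_degree} the degree of a vertex in this bipartite graph is exactly its cross-degree. Thus the cross-degree data of $G$ is nothing more than the bipartite degree data of its cross-edge graph.

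Making this precise yields a two-way correspondence. First I would argue that if $\crosspair$ is the cross-degree sequence pair of a split graph $G$ with $\alpha$ listing the cross-degrees on $K$ and $\beta$ those on $S$, then deleting all clique edges from $G$ leaves a bipartite graph on parts $K,S$ whose two degree sequences are precisely $\alpha$ and $\beta$; hence $(\alpha,\beta)$ is bigraphic. Conversely, given any bipartite graph $B$ on parts $X,Y$ realizing degree sequences $\alpha$ (on $X$) and $\beta$ (on $Y$), adding all edges within $X$ and none within $Y$ produces a split graph with $KS$-partition $X \cup Y$; a short computation using $\cross(x) = \deg_G(x) - (|X|-1) = \deg_B(x)$ for $x \in X$ and $\cross(y) = \deg_G(y) = \deg_B(y)$ for $y \in Y$ shows this split graph realizes $\crosspair$. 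Since bipartite realizability is symmetric in the two parts, it is immaterial which of $\alpha,\beta$ is assigned to the clique side, so the unordered pair $\crosspair$ is the cross-degree sequence pair of some split graph if and only if $(\alpha,\beta)$ is bigraphic.

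At this point I would invoke the classical theorem of Gale~\cite{Ga57} and Ryser~\cite{Ry57}: the pair $(\alpha,\beta)$ with $\alpha$ non-increasing is bigraphic if and only if $\sum_i a_i = \sum_j b_j$ together with $\sum_{i=1}^k a_i \le \sum_{j=1}^m \min\{k,b_j\}$ for all $1 \le k \le n$. Combining this with the correspondence above proves the lemma, once the edge-count identity $\sum_i a_i = \sum_j b_j$ is accounted for. The subtlety to handle carefully---and the step I expect to require the most care---is reconciling this equal-sum hypothesis with the one-sided list of inequalities in the statement, since the $\alpha$-inequalities alone do not force the sums to agree. This is where the unordered nature of $\crosspair$ and the symmetric formulation recorded in Lemma~\ref{prop:cross_degree_GaleRyser} are essential: reading the Gale--Ryser inequalities for the pair in both orderings, the case $k=n$ of the $\alpha$-inequalities gives $\sum_i a_i \le \sum_j \min\{n,b_j\} \le \sum_j b_j$, while the case $k=m$ of the $\beta$-inequalities gives $\sum_j b_j \le \sum_i a_i$, so the two families together force $\sum_i a_i = \sum_j b_j$ and then reduce to exactly the classical dominance condition. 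I would therefore interpret ``the $k$th Gale--Ryser inequality \dots\ for all $1 \le k \le n$'' in the unordered sense of Lemma~\ref{prop:cross_degree_GaleRyser}, making the equivalence with classical bigraphic realizability exact.
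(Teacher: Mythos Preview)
The paper does not actually supply a proof of this lemma; it simply attributes the result to Gale and Ryser and remarks that it ``was originally proven in the context of bipartite graphs.'' Your proposal makes explicit exactly the translation the paper leaves implicit: strip the clique edges from a split graph to obtain a bipartite graph on parts $K,S$ whose degree sequences are the cross-degrees, and conversely complete any bipartite realization to a split graph. That reduction is correct and is the intended argument.

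Your attention to the equal-sum hypothesis is well placed and is the only point requiring care. The literal reading ``for all $1 \le k \le n$'' of the $\alpha$-inequalities alone does not force $\sum a_i = \sum b_j$ (e.g.\ $\alpha=(1)$, $\beta=(2)$ passes the single $\alpha$-inequality but is not realizable), so you are right that the statement must be read in the unordered sense of Lemma~\ref{prop:cross_degree_GaleRyser}, taking both the $\alpha$- and $\beta$-families. Your extraction of $\sum a_i \le \sum b_j$ from $k=n$ and the reverse from $k=m$ is exactly how the two families together recover the classical hypothesis. With that interpretation fixed, your argument is complete and matches the paper's intent.
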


We also find an analogue of Theorem~\ref{prop: KW theorem} for cross-degree sequence pairs. Recall that in each of $\alpha$ and $\beta$ the terms are assumed to be in non-increasing order.

\begin{lemma}
	\label{prop:Rao_induced_split_KW}
	Let $\crosspair$ be a cross-degree sequence pair of a graph with $\alpha = (a_1, \ldots, a_n)$ and $\beta = (b_1, \ldots, b_m)$. For any term $a_i$ (resp. $b_i$), there exists a realization of $\crosspair$ such that the vertex corresponding to the term $a_i$ (resp. $b_i$) is adjacent to the vertices corresponding to terms $b_1, \ldots, b_{a_i}$ (resp. $a_1, \ldots, a_{b_i}$).
\end{lemma}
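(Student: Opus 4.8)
The plan is to translate the Kleitman--Wang theorem (Theorem~\ref{prop: KW theorem}), which is stated in terms of ordinary degree sequences, into the language of split graphs and cross-degree sequence pairs. Recall from Definition~\ref{def:cross_degree} that for a split graph with a fixed $KS$-partition, cross-degrees record only the edges between $K$ and $S$; thus realizing a cross-degree sequence pair $\crosspair$ is precisely the problem of realizing the degree sequence of the bipartite graph between $K$ and $S$, with parts prescribed to have cross-degree sequences $\alpha$ and $\beta$. So the statement we want is really the bipartite Gale--Ryser analogue of the Kleitman--Wang ``greedy neighbor'' property: given a realizable bipartite degree sequence pair $(\alpha,\beta)$, for any vertex on the $\alpha$-side we may choose a realization in which that vertex's neighbors are the $b_1, \ldots, b_{a_i}$ vertices of largest cross-degree on the $\beta$-side (and symmetrically with $\alpha$ and $\beta$ swapped).

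First I would reduce to the bipartite setting explicitly: let $\crosspair$ be given with $\alpha = (a_1,\dots,a_n)$ and $\beta = (b_1,\dots,b_m)$; by Lemma~\ref{prop:graphic_Gale_Ryser} this pair satisfies all the Gale--Ryser inequalities, so there is a bipartite graph $B$ with parts $V = \{v_1,\dots,v_n\}$ and $W = \{w_1,\dots,w_m\}$, $\deg_B(v_j) = a_j$, $\deg_B(w_j) = b_j$. Attaching all clique edges on (say) $V$ and none on $W$ — or whichever assignment makes $V$ the clique and $W$ the stable set — turns $B$ into a split graph realizing $\crosspair$, and adding or deleting clique edges never changes cross-degrees, so it suffices to work entirely with $B$. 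Then I would fix the index $i$ and apply the standard exchange argument: among all bipartite realizations of $(\alpha,\beta)$ with the prescribed parts, pick one maximizing $\sum_{w_k \in N(v_i)} (m - k)$, i.e.\ one whose neighborhood of $v_i$ is ``as far to the left as possible'' in the ordering $w_1,\dots,w_m$. If in this realization $v_i$ is not adjacent to $\{w_1,\dots,w_{a_i}\}$, then there exist indices $k \le a_i < \ell$ with $v_i \not\sim w_k$ and $v_i \sim w_\ell$; since $\deg(w_k) = b_k \ge b_\ell = \deg(w_\ell)$ and $v_i$ is a neighbor of $w_\ell$ but not $w_k$, there is a vertex $v_j \ne v_i$ with $v_j \sim w_k$ and $v_j \not\sim w_\ell$. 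Swapping the pair of edges $\{v_iw_\ell, v_jw_k\}$ for $\{v_iw_k, v_jw_\ell\}$ preserves all degrees and strictly increases the objective, a contradiction. (One must note that $b_k = b_\ell$ with no such $v_j$ forces $N(w_k) = N(w_\ell)$, in which case $v_i \sim w_\ell$ already gives $v_i \sim w_k$, contradiction; this is the only edge case.) The symmetric statement for a term $b_i$ follows by swapping the roles of $\alpha$ and $\beta$ (equivalently, by exploiting that $\crosspair$ is unordered).

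The main obstacle is not conceptual — this is a routine Gale--Ryser-style edge-exchange — but bookkeeping: one must be careful that the extremal realization chosen is genuinely over bipartite graphs with the two parts \emph{fixed} (not merely over all split graphs realizing $\crosspair$, where the $KS$-partition could in principle vary), and one must handle ties in the $b$-values so that the ``missing neighbor'' vertex $v_j$ is guaranteed to exist — precisely the place where the non-increasing order of $\beta$ is used. Once the bipartite statement is in hand, re-inflating to a split graph by restoring the clique edges on the appropriate side is immediate, since clique edges are invisible to cross-degrees, completing the proof.
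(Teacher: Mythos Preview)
Your argument is correct, but the paper takes a much shorter route. Rather than re-proving a bipartite Kleitman--Wang statement by edge-exchange, the paper simply writes down the ordinary degree sequence $\pi = (b_1+m-1,\dots,b_m+m-1,a_1,\dots,a_n)$ of the split graph in which the $\beta$-side is the clique, checks that $\pi$ is non-increasing (using $a_1\le m$, with equality forcing $b_m\ge 1$), and then applies Theorem~\ref{prop: KW theorem} directly at index $m+i$: since $a_i\le m$, the first $a_i$ vertices other than $v_i$ are exactly $w_1,\dots,w_{a_i}$. That is the whole proof. What you gain with your approach is self-containment and a clean bipartite picture that sidesteps any worry about whether the KW realization of $\pi$ actually respects the intended $KS$-partition; what the paper gains is brevity, since it leverages Theorem~\ref{prop: KW theorem} as a black box instead of rerunning the extremal/2-switch argument that underlies it.
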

\begin{proof}
	For any $a_i \in \alpha$, the sequence $\pi = (b_1 + m-1, b_2 + m-1, \ldots, b_m + m -1, a_1, \ldots, a_n)$ is a degree sequence associated with $\crosspair$. Since $\crosspair$ corresponds to a split graph having a clique of size $|\beta| = m$, we have $a_1 \leq m$, with equality only if $b_m \ge 1$. Hence $\pi$ is non-increasing, and to finish the proof we apply Theorem~\ref{prop: KW theorem}.
\end{proof}

We now begin our proof of Theorem \ref{prop:Rao_minimal_split}. We first show that the sequence pairs in~\eqref{eq: split cross degree lists} are Rao-minimal. Proceeding as in the proof of Theorem \ref{prop:Rao_minimal_nonsplit}, we define Kleitman-Wang-style reductions and apply them to all larger target pairs, concluding that every target pair other than the ones listed in~\eqref{eq: split cross degree lists} may be successfully reduced to a shorter target pair, so the result follows inductively.

\begin{proof}[Proof of Theorem~\ref{prop:Rao_minimal_split}]
As noted above, each pair in~\eqref{eq: split cross degree lists} is a target pair. To see that the sequence pairs in~\eqref{eq: split cross degree lists} are Rao-minimal, 
we show that sequence pairs $\crosspair$ with length at most 7 correspond to degree sequences of graphs in $HCU$. Let $\alpha = (a_1,\dots,a_n)$ and $\beta=(b_1,\dots,b_m)$, and assume that $n+m \leq 7$. If $m \le 2$ (or $n \le 2$, respectively), then since $\crosspair$ is indecomposable, every term in $\alpha$ (in $\beta$) must be $1$, so the sequence pair satisfies (1a) or (1b) of Theorem~\ref{prop:degree_characterization_split}. If instead $m=3$ and $n=3$, then all terms in $\alpha$ and $\beta$ are $1$ or $2$. Since at least two terms of $\alpha$ must be equal, condition (1a) or (2a) from Theorem~\ref{prop:degree_characterization_split} applies. If $m=3$ and $n=4$, then all terms in $\alpha$ are $1$ or $2$. This yields condition (1a) or (2a) as before unless $\alpha=(2,2,1,1)$; with that choice of $\alpha$, conditions (3a) or (4a) apply unless $b_1 \neq 3$ and $b_m \neq 1$, i.e., $\beta=(2,2,2)$. Finally, the pair $\{(2,2,1,1),(2,2,2)\}$, fits conditions (3b) and (4b). The case $m=4$ and $n=3$ is similar. In every case, Theorem~\ref{prop:degree_characterization_split} shows that $\crosspair$ belongs to a graph in $HCU$, as claimed. 

Hence target pairs with length 8 are necessarily Rao-minimal. It is straightforward to verify that the sequence pairs in~\eqref{eq: split cross degree lists} are precisely these target pairs.

From here, we proceed by induction on the length of a target pair, showing that all target pairs with length at least 9 must Rao-contain a target pair with length that is 1 or 2 less. Let $\crosspair$ be a target pair where $\alpha = (a_1,\dots,a_n)$ and $\beta = (b_1,\dots,b_m)$ and $\alpha,\beta$ satisfy $m+n \ge 9$ with $m,n \ge 3$. (We lose no generality here, for if $m$ or $n$ is at most 2, then Lemma~\ref{prop:EG_nonsplit_useful} or Theorem~\ref{prop:degree_characterization_split} implies that any realization of $\crosspair$ is decomposable or an element of $HCU$.) For each term $a=a_i$ of $\alpha$, define a reduction $R_{a, \alpha}$ to mean removing $a_i$ from $\alpha$ and decrementing each of $b_1,\dots,b_a$ by $1$, reordering terms in the resulting sequence as necessary to keep them in non-increasing order. We denote the reduced sequence pair by $\redpair_{a, \alpha}$. Reductions $R_{b,\beta}$ may be similarly defined for terms $b = b_j$ belonging to $\beta$; the resulting cross-degree pair is denoted $\redpair_{b, \beta}$. We may omit subscripts on reduced sequence pairs $\redpair$ when context makes it clear what is meant. Note that all of these reductions are cross-degree analogs of KW-reductions, and in each the reduced pair $\redpair$ corresponds to a degree sequence resulting from a KW-reduction on a degree sequence corresponding to $\crosspair$.

To help in recognizing the indecomposability of sequence pairs after reductions, we specialize Theorem~\ref{prop:reduction_decomposable} to the context of split graphs.

\begin{corollary} \label{cor: indecomp redpair}
Let $\crosspair$ be an indecomposable sequence pair, and let $\redpair$ be the result of either $R_{a,\alpha}$ or $R_{b,\beta}$ for some term $a$ or $b$. If $\alpha'$ contains no term equal to $0$ or $|\beta'|$, and if $\beta'$ contains no term equal to $0$ or $|\alpha'|$, then $\redpair$ is an indecomposable sequence pair.
\end{corollary}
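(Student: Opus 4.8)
The plan is to pull the statement back to ordinary degree sequences and apply Theorem~\ref{prop:reduction_decomposable}. Fix a degree sequence $\pi$ corresponding to $\crosspair$; since $\crosspair$ is indecomposable, so is $\pi$, because (in)decomposability for cross-degree sequence pairs agrees with (in)decomposability for the corresponding degree sequences, as noted before Lemma~\ref{prop:cross_degree_GaleRyser}. By the discussion of Kleitman--Wang-style reductions immediately preceding the corollary, the reduction $R_{a,\alpha}$ (or $R_{b,\beta}$) that produces $\redpair$ corresponds to a KW-reduction of $\pi$, and the resulting degree sequence $\pi'$ corresponds to $\redpair$. I would then argue by contradiction: if $\redpair$ were decomposable, then $\pi'$ would be decomposable, so Theorem~\ref{prop:reduction_decomposable} would give $\pi' = (0)\circ\rho$ or $\pi' = \rho\circ(0)$ for some graphic sequence $\rho$.

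The case $\pi' = \rho\circ(0)$ reduces to the case $\pi' = (0)\circ\rho$: by Remark~\ref{prop:commute_complement_inverse} inversion reverses the order of the components of a composition, so the inverse of a realization of $\rho\circ(0)$ decomposes as $(0)\circ\rho^I$, and since cross-degree sequence pairs are invariant under inversion this inverse still corresponds to $\redpair$. So I may assume $\pi' = (0)\circ\rho$, which makes $(0)$ the leftmost component of the Tyshkevich decomposition of $\pi'$, so every realization of $\pi'$ has a single-vertex leftmost Tyshkevich component; depending on whether that component's $KS$-partition places the vertex in the clique or in the stable set, the vertex is dominating or isolated in the realization. Now take a realization $G'$ of $\pi'$ whose chosen $KS$-partition records $\redpair$, and observe: if the singleton vertex is isolated in $G'$, then its cross-degree is $0$ in every $KS$-partition of $G'$, so $0$ is a term of $\alpha'$ or of $\beta'$; if it is dominating, then in every $KS$-partition it either lies in the clique with cross-degree equal to the stable-set size, or lies alone in the stable set with cross-degree equal to the clique size, so one of $\alpha',\beta'$ has a term equal to the length of the other. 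Both outcomes contradict the hypothesis, so $\redpair$ must be indecomposable.

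The crux is a bookkeeping matter rather than a genuine difficulty: Theorem~\ref{prop:reduction_decomposable} only guarantees that \emph{some} realization and decomposition of $\pi'$ has a singleton ``end'' component, whereas $\redpair$ is tied to a specific $KS$-partition in which that vertex need not sit in a left- or rightmost Tyshkevich component. The remedy is exactly the partition-independent observation used above -- an isolated vertex always contributes a $0$ term, and a dominating vertex always contributes a term equal to the opposite side's length, no matter which $KS$-partition one reads off -- which is precisely why the hypotheses are phrased in terms of terms equal to $0$, $|\alpha'|$, or $|\beta'|$. (As a side remark, these same hypotheses already rule out directly, with no appeal to Theorem~\ref{prop:reduction_decomposable}, the alternatives of Lemma~\ref{prop:cross_degree_GaleRyser} in which the smallest term on one side of $\redpair$ equals $0$ or the length of the other side; only the Gale--Ryser equality obstructions genuinely require the argument above.)
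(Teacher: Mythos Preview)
Your proof is correct and takes exactly the approach the paper indicates: the corollary is stated without proof, introduced only as the specialization of Theorem~\ref{prop:reduction_decomposable} to split graphs, and your argument carries out that specialization in detail.

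One small remark on the inversion step for the $\rho\circ(0)$ case: Remark~\ref{prop:commute_complement_inverse} computes the inverse with respect to the $KS$-partition \emph{implied by the composition}, so to conclude that $(G')^I$ decomposes as $(0)\circ\rho^I$ you implicitly need that the partition $K'\cup S'$ recording $\redpair$ agrees with the Tyshkevich partition of $G'$ except possibly at the rightmost singleton $v$. This does hold---if we are not already in the $(0)\circ\rho$ case, the leftmost indecomposable component has at least two vertices and (by Remark~\ref{prop:indecomposable_DIS}) contributes no swing vertices, and the same applies to all non-rightmost components, forcing $K'\cup S'$ to match the Tyshkevich partition on everything but $v$. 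With that, your inversion argument goes through; alternatively one can argue directly that in either placement of $v$ in $K'$ or $S'$, the vertex $v$ has cross-degree $0$ or cross-degree equal to the size of the opposite side. Either way the conclusion is immediate, and your ``bookkeeping'' paragraph correctly identifies this as the only subtlety.
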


Given $\crosspair$, if after a reduction $R_{a,\alpha}$ or $R_{b,\beta}$ the pair $\redpair$ satisfies none of the eight conditions in Theorem \ref{prop:degree_characterization_split} and contains no terms in $\alpha'$ equal to $0$ or $|\beta'|$ and no terms in $\beta'$ equal to $0$ or $|\alpha'|$, then Theorem \ref{prop:degree_characterization_split} implies that $\redpair$ is forbidden, Lemma \ref{prop:Rao_induced_split_KW} implies that $\redpair$ is Rao-contained by $\crosspair$, and Corollary~\ref{cor: indecomp redpair} implies that $\redpair$ is indecomposable. In such a case, $\crosspair$ is not Rao-minimal. 

We now consider what happens if every reduction $R_{a, \alpha}$ and $R_{b, \beta}$ yields a pair $\redpair$ that satisfies one of the eight conditions in Theorem \ref{prop:degree_characterization_split}, or contains some term in $\alpha'$ equal to $0$ or $|\beta'|$, or contains some term in $\beta'$ equal to $0$ or $|\alpha'|$. Given $a = a_i$ for some $i$ in $\{1,\dots,n\}$, we claim that the reduction $R_{a, \alpha}$ yields one of these outcomes if and only if one or more of the conditions occurs from the list that follows this paragraph. As in the previous subsection, for most conditions the Roman numeral and accompanying Latin letter indicate which condition from Theorem~\ref{prop:degree_characterization_split} is met by $\redpair$; for instance, condition (II.b.1) below is one case corresponding to condition (2b) in the earlier theorem. Suffixes with Latin numbers are used when a condition from Theorem~\ref{prop:degree_characterization_split} is equivalent to a disjunction of multiple conditions here. Since $\crosspair$ is not the cross-degree sequence pair of a graph in $HCU$, by Theorem~\ref{prop:degree_characterization_split} we see that $a_2 \geq 2$ and $b_2 \geq 2$, and $a_{n-1} \leq m-2$ and $b_{m-1} \leq n-2$. More generally, in arriving at these conditions care has been taken to ensure that none of the conditions from Theorem~\ref{prop:degree_characterization_split} holds for $\crosspair$.

\begin{multicols}{2}
	\begin{enumerate}
		\item [\rm{(I.a)}] $i \le 2$ and $a_3=\dots=a_n=1$. 
		\item [\rm{(I.b.1)}] $a\geq 2$ and $b_2 = 2$ and $b_{a+1} = \dots = b_m =1$. 
		\item [\rm{(I.b.2)}] $b_1 = \dots = b_{a+1}=2$ and $b_{a+2}=\dots=b_m = 1$.
		\item [\rm{(II.a)}] $i \ge n-1$ and $a_1=\dots=a_{n-2} = m-1$. 
		\item [\rm{(II.b.1)}] $b_1=\dots=b_a = n-1$ and $b_{m-1} = n-2$. 
		\item [\rm{(II.b.2)}] $b_1=\dots=b_{a-1} = n-1$ and $b_a=\dots=b_m = n-2$. 
		\item [\rm{(III.a.1)}] $i = 1$ or $a_1=2$; $a_2 = 2$; and $b_1 = \ldots = b_{a + 1} = n-2$. 
		\item [\rm{(III.a.2)}] $i = 1$, $a_1 \ge 3$, $a_2 = 2$, and $b_1 = n-1$.
		\item [\rm{(III.b)}] $i \ge 2$ or $a_2 = m-1$; $a_1 = m-1$; $b_1=3$; and $b_{a+1}\le 2$. 
		\item [\rm{(IV.a.1)}] $i = n$ or $a_n = m-2$; $a_{n-1} = m-2$; and $b_a=\dots=b_m=2$. 
		\item [\rm{(IV.a.2)}] $i = n$, $a_{n-1}=m-2$, $a_n \le m-3$, and $b_m = 1$.
		\item [\rm{(IV.b)}] $i \le n-1$ or $a_{n-1} = 1$; $a_n = 1$; $b_a \ge n-2$; and $b_m = n-3$. 
		\item [\rm{(V)}] $b_a=1$. 
		\item [\rm{(VI)}] $b_{a+1}=n-1$.
	\end{enumerate}
\end{multicols}

\bigskip
As in the non-split argument in the previous subsection, for brevity we illustrate only a representative equivalence from the list above and leave verification of the others to the reader. Specifically, we show the equivalence of condition (2b) from Theorem~\ref{prop:degree_characterization_split}, applied to $\redpair_{a,\alpha}$, and the disjunction of conditions (II.b.1) and (II.b.2). It is straightforward to see that if $\crosspair$ satisfies either (II.b.1) or (II.b.2) above, then the reduced pair $\redpair_{a,\alpha}$ satisfies condition (2b) of Theorem~\ref{prop:degree_characterization_split}. Conversely, if (2b) holds for $\redpair_{a,\alpha}$, then the second-to-last term of $\beta'$ is one less than the length of $\alpha'$, so this term equals $n-2$. Now the second-to-last term of $\beta'$ must equal either $b_{m-1}$ or $b_{m-1}-1$, since even after decrementing and reordering, the value in no position decreases by more than 1 as $\beta$ is changed to $\beta'$. Since $b_{m-1} \leq n-2$, we conclude that both $b_{m-1}$ and the second-to-last term of $\beta'$ equal $n-2$. We must now determine conditions under which the values in these positions of $\beta$ and of $\beta'$ agree. If no reordering of terms involves the $(m-1)$th position, then $a < m-1$ and $b_a > b_{m-1}=n-2$; this requires that $b_1=\dots=b_a=n-1$, and (II.b.1) holds. If reordering of terms involving the $(m-1)$th position does happen after terms of $\beta$ are decremented, then some term of $\beta$ that was decremented originally had value $b_{m-1}$, but any such term must be moved to a position after the $(m-1$)th during reordering; this requires that $b_{a-1}>b_{a} = b_{m-1}=b_m$, and (II.b.2) holds, as desired. 

\bigskip
Given the assumptions outlined above, we assume henceforth the equivalence of the conditions from Theorem~\ref{prop:degree_characterization_split}, as applied to $\redpair_{a,\alpha}$, and conditions (I.a) through (VI) as applied to $\crosspair$. To this point we have not had a reason to distinguish between $\alpha$ and $\beta$, and we have likewise assumed that every reduced pair $\redpair_{b,\beta}$ satisfies a condition from Theorem~\ref{prop:degree_characterization_split} or contains a term in $\alpha'$ or $\beta'$ that would imply that the reduced pair is decomposable. Hence by the arguments above, we may assume that $\crosspair$ also satisfies one of fourteen conditions (I.a$'$), (I.b.1$'$), and so on through (VI$'$), where in each corresponding condition listed above $n$ and $m$ are exchanged, as are the roles played by terms of $\alpha$ and of $\beta$. The assumption here is that a reduction $R_{b,\beta}$ was performed (with $b=b_i$) rather than $R_{a,\alpha}$. For example, condition (III.a.1$'$) is the condition that $i=1$ or $b_1=2$; $b_2=2$; and $a_1=\dots=a_{b+1} = m-2$. 

We now turn to showing that each condition (I.a) through (VI) leads either to a contradiction or to Rao-containment by $\crosspair$ of one of the cross degree sequence pairs listed in~\eqref{eq: split cross degree lists}. At times the conditions (I.a$'$) through (VI$'$) will help our analysis.

For ease of notation throughout, let \[\sum \alpha = \sum_{t=1}^n a_t \text{ and } \sum \beta = \sum_{t=1}^m b_t.\] Since each sum counts the same edges in a split graph, $\sum \alpha = \sum \beta$. When we compute upper and lower bounds on these sums, we notate them first in unsimplified form: (multiplicity of terms equal to largest term)$\cdot$ (value of largest term) $ + $ (multiplicity of terms equal to next lower term) $\cdot $ (value of that term) $+ \ldots$. 

We handle sequence pairs with a``short" $\alpha$ or $\beta$ first. Earlier we showed that $m,n \geq 3$; we begin with the case that $m=3$ and hence $n \geq 6$. By assumption, $\crosspair$ does not satisfy the hypothesis of Theorem~\ref{prop:degree_characterization_split}; it follows that $a_1 = a_2 = 2$ and $a_{n-1} = a_n = 1$, and $3 \leq b_1 \le n-2$ and $2 \leq b_3 \leq n-3$. On the other hand, the reduced pair $\redpair_{a_3,\alpha}$ does satisfy one of the conditions (I.a) to (VI) above. Since we may take $i=3$ and consequently $a \in \{1,2\}$, our known values or bounds for the beginning/ending terms of $\alpha,\beta$ force the applicable condition to be one of the following: (III.a.1) if $a=1$, (III.b), (IV.a.1), or (IV.b). If (III.a.1) holds with $a=1$, then $b_2 = n-2$ and $\sum \beta \geq 2(n-2)+1\cdot 2$ while $\sum \alpha = n+2$, a contradiction since $n \geq 6$. If (III.b) holds, then $b_1=3$ and $b_3 \leq 2$, so $8 \leq n+2 \leq \sum \alpha = \sum \beta \leq 8$, and the resulting equalities imply that $\alpha = (2,2,1,1,1,1)$ and $\beta = (3,3,2)$; in this case $\redpair_{a_3,\alpha} = \{(2,2,1,1,1),(3,2,2)\}$, which appears in \eqref{eq: split cross degree lists}. If instead (IV.a.1) holds, then since $b_1>2$ we must have $a=2$ and $b_2=b_3=2$; but then $\sum \beta \leq n+2$ while $\sum \alpha \geq n+3$, a contradiction. Finally, if (IV.b) holds, then since $b_3=n-3$ we have $3n-9 \leq \sum \beta =\sum \alpha \leq 2n-2$, implying that $n\in\{6,7\}$. Direct verification shows that if $n=6$, the only feasible pairs $\crosspair$ are $\{(2,2,2,1,1,1),(3,3,3)\}$ and $\{(2,2,2,2,1,1),(4,3,3)\}$; in both cases $\redpair_{a_3,\alpha}$ appears in $\eqref{eq: split cross degree lists}$. If $n=7$, the only feasible pair is $\{(2,2,2,2,2,1,1),(4,4,4)\}$. Here $\redpair_{a_3,\alpha} = \{(2,2,2,2,1,1),(4,3,3)\}$, one of the sequences just considered; we see again that $\crosspair$ Rao-contains an element of $\eqref{eq: split cross degree lists}$.

To this point we have not distinguished between $\alpha$ and $\beta$ in the discussion, so we may exchange the names of $\alpha,\beta$ and apply the preceding arguments if $n=3$. Breaking the symmetry between $\alpha$ and $\beta$ somewhat, we assume henceforth that $4 \leq m \leq n$. Since $m+ n \geq 9$, we have $n \geq 5$. 

To make the remaining proof more efficient, we consider the conditions from our list in the following order: (III.a.1), (IV.a.1), (III.a.2), (IV.a.2), (I.b.1), (II.b.1), (I.b.2), (II.b.2), (I.a), (II.a), (III.b), (IV.b), and lastly (V) and (VI) together. 

Suppose first that $\crosspair$ satisfies (III.a.1). Here $i = 1$ or $a_1 = 2$; $a_2 = 2$; and $b_1 = \ldots = b_{a + 1} = n-2$. (Recall that $a=a_i \geq 1$.) Since the terms of $\alpha$ and $\beta$ are non-increasing, \begin{equation}\label{eq: III.a.1} (a + 1)(n-2) + (m - a - 1)\cdot 1 \leq \sum\beta = \sum \alpha \le (n-1)\cdot 2 + 1\cdot a.\end{equation}
Rearranging this inequality yields $(n-4)(a-1) \leq 5 - m$. Since $m \geq 4$ and $n \geq 5$ and $a \geq 1$, both sides of this latter inequality are non-negative; hence $m \in \{4,5\}$, and we see that that $a=1$ unless $a=2$ and $n=5$ and $m=4$. In the latter case, the resulting equality forces $\crosspair = \{(2,2,2,2,2),(3,3,3,1)\}$, which satisfies (4a) of Theorem~\ref{prop:degree_characterization_split}, contradicting the assumption that $\crosspair$ was forbidden. If instead $a=1$, then we may write $\alpha = (2,2,a_3,\dots,a_{n-1},1)$ and $\beta = (n-2,n-2,b_3,\dots,b_m)$. Consider the reduction $R_{a_2}$; we claim that $\redpair$ is a target sequence pair with smaller length. Indeed, this pair is indecomposable by Corollary~\ref{cor: indecomp redpair}. To see that it is forbidden as well, we consult Theorem~\ref{prop:degree_characterization_split} as applied to $\alpha' = (2,a_3,\dots,a_{n-1},1)$ and $\beta' = (n-3,n-3,b_3,\dots,b_m)$, though the terms of $\beta'$ may be reordered so as to be non-increasing. Observe that $|\alpha'|=n-1$, and conditions (1b), (2a), (3b), and (4a) clearly cannot hold due to our lower bounds on $m$ and $n$. If (1a) were to hold for $\redpair$, then $\alpha=(2,2,1,\dots,1)$ and the inequality \eqref{eq: III.a.1} yields $2(n-2)+1(m-2) \leq n+2$ and hence $m+n \leq 8$, a contradiction. The condition (2b) cannot hold for $\redpair$, as $\beta'$ has at least two terms equal to $n-3$. If (3a) were to hold for $\redpair$, then $b_3=n-2$, and the inequality \eqref{eq: III.a.1} yields $3(n-2)+1(m-3) \leq 2(n-1)+1$, again leading to the contradiction $m+n \leq 8$. Finally, if $\redpair$ satisfied (4b) from Theorem~\ref{prop:degree_characterization_split}, then $b_m \geq n-3$, so the inequality \eqref{eq: III.a.1} implies that $(n-2)2+(n-3)(m-2) \leq 2(n-1)+1$, leading to $(n-3)(m-2)\leq 3$; this is a contradiction as $n \geq 5$ and $m \geq 4$. Hence $\redpair$ is a target sequence pair, and $\crosspair$ is not a Rao-minimal sequence pair.

Suppose instead that $\crosspair$ satisfies condition (IV.a.1). The complement of a split graph realization of $\crosspair$ has degree sequence pair $\{\overline{\alpha},\overline{\beta}\}$, where $\overline{\alpha}=(m-a_n,\dots,m-a_1)$ and $\overline{\beta} = (n-b_m,\dots,n-b_1)$. Since $\crosspair$ satisfies (IV.a.1), it is straightforward to verify that $\{\overline{\alpha},\overline{\beta}\}$ satisfies (III.a.1). By the previous argument and Remark \ref{prop:Rao_min_indecomp_complement}, we see that $\crosspair$ is not Rao-minimal.

Suppose next that $\crosspair$ satisfies (III.a.2). Here $a = a_1 \ge 3$, $a_2 = 2$, and $b_1 = n-1$. We will see that for any such $\crosspair$, the reduction $R_{a_{n-1},\alpha}$ yields a target pair $\redpair$. First, prior assumptions ensure that $\redpair_{a_{n-1},\alpha}$ has no terms in $\beta'$ equal to $0$, nor terms in $\alpha'$ equal to $0$ or $|\beta'|$. If $\redpair$ contains a term in $\beta'$ equal to $|\alpha'| = n-1$, then $b_{k+1} = n-1$, where $k = a_{n-1}$. We find $\sum \beta \ge (k+1)(n-1) + (m - k - 1)\cdot 1$ and $\sum \alpha \le 1\cdot(m-1) + (n-3)\cdot 2 + 2\cdot k$, and $\sum \beta = \sum \alpha$ leads to $(n-4)(k - 1) \le -1$, a contradiction. It follows from Corollary~\ref{cor: indecomp redpair} that $\redpair$ is indecomposable. Contrary to our claim that $\redpair$ is a target pair, suppose that one of the other conditions from Theorem~\ref{prop:degree_characterization_split} holds for $\redpair$. Our bounds $4 \leq m \leq n$ and the present case's starting conditions on $a_1$, $a_2$, and $b_1$ rule out (1a), (2a), (3a), and (3b). If (4a) or (4b) were to occur for $\redpair$, then $\crosspair$ would also satisfy (4a) or (4b), respectively, a contradiction. If $\redpair$ satisfies (1b), then we must have $a_{n-1} = 2$ and $b_2 = 2$. Thus $\beta = (n-1, 2, 1, \ldots, 1)$ and $\alpha = (a_1, 2, \ldots, 2, a_n)$, which yields a contradiction to $\sum \alpha = \sum \beta$. If $\redpair$ satisfies (2b), then $\beta' = (n-2, \ldots, n-2, b_m')$ for some $b_m'$, so $\sum \beta \ge a_{n-1}(n-1) + (m-1-a_{n-1})(n-2) + 1\cdot 1$. However, $\sum \alpha \le 1\cdot(m-1) + (n-3)\cdot2 + 2\cdot a_{n-1}$, and the relation $\sum \beta = \sum \alpha$ yields $(m-3)(n-3) \le a_{n-1}-1$, a contradiction. Hence $\redpair$ is a target sequence, as claimed, and $\crosspair$ is not Rao-minimal.

If $\crosspair$ satisfies condition (IV.a.2), then with an argument similar to that of condition (IV.a.1) above, we can apply condition (III.a.2) to the complementary pair $\{\overline{\alpha},\overline{\beta}\}$ to conclude that $\crosspair$ is not Rao-minimal.

At this point we comment on our process. As we have considered the conditions (III.a.1) and (III.a.2) above and deduced results about conditions (IV.a.1) and (IV.a.2), we have proceeded by taking an arbitrary target sequence pair $\crosspair$ satisfying the given condition and examining a reduced pair $\redpair$. After showing that that $\redpair$ must be indecomposable, we show that $\redpair$ is also forbidden (and hence a target sequence pair) by checking each of the conditions (1a) through (4b) from Theorem~\ref{prop:degree_characterization_split}. As we continue the proof in the coming paragraphs, moving through other conditions from the list (I.a) through (VI), from this point on it will not be necessary to check conditions (3a) or (4a) from Theorem~\ref{prop:degree_characterization_split} while verifying that $\redpair$ is forbidden, as long as $\redpair=\redpair_{a,\alpha}$ for some term $a \in \alpha$. This is because of the respective equivalences already mentioned between (3a) and (4a), applied to $\redpair$, and the disjunctions ``(III.a.1) or (III.a.2)'' and ``(IV.a.1) or (IV.a.2)'' applicable to $\crosspair$, as well as the arguments thus far in the proof.

Resuming our progress, suppose now that $\crosspair$ satisfies (I.b.1) for some term $a$ of $\alpha$, so $b_2=2$ and $b_{a+1}=1$. Here $a > 1$ and $\sum \beta \le 1 \cdot (n-1) + (a-1)\cdot 2 + (m - a) \cdot 1= n+m+a-3$. If $a_n \ge 2$, then $\sum \alpha \ge 1\cdot a + (n-1)\cdot 2$ and we have $2n + a - 2 \le n + m + a -3 $, which is a contradiction since $m \leq n$. Hence $a_n = 1$. We now show that the reduction $R_{a_n,\alpha}$ yields a target sequence. First, it is clear that $\redpair_{a_n,\alpha}$ has no terms in $\alpha'$ equal to $0$ or $|\beta'|$ (since $\alpha$ has none) and no terms in $\beta'$ equal to $0$ or $|\alpha'|$, since $b_1 \ge 2$ and $b_2 \le n-2$. By Corollary~\ref{cor: indecomp redpair}, $\redpair$ is indecomposable. Consulting conditions in Theorem~\ref{prop:degree_characterization_split} as they might apply to $\redpair$,
we easily rule out (1a) as it implies that $\crosspair$ satisfies (1a) as well; (2b) and (4b) likewise cannot hold given our lower bound on $n$. Condition (1b) implies that $b_1 = 2$ and $b_3 = 1$, so $\sum \beta = m+2$. In this case, since (1a) does not hold, $a_2 \geq 2$, so $m+2=\sum \alpha \geq n+2$. Since $n \geq m$ we conclude that $\crosspair = \{(2,2,1,\ldots, 1), (2,2,1,\ldots,1)\}$ with $m = n \geq 5$; this pair Rao-contains the pair $\{(2,2,1,1),(2,2,1,1)\}$ from \eqref{eq: split cross degree lists}, contradicting our assumption that $\crosspair$ is Rao-minimal. Thus (1b) cannot hold. If instead $\redpair$ satisfies (2a), then $a_{n-2} = m-1$, so $\sum \alpha \ge (n-2)(m-1)+2\cdot 1$, and $\sum \beta \le 1\cdot (n-1) + (m-2)\cdot 2 + 1\cdot 1$. Combining these inequalities via $\sum \alpha = \sum \beta$ yields $(n-4)(m-2) \leq 0$, a contradiction, so (2a) does not hold for $\crosspair$. Lastly, if $\redpair$ satisfies (3b), then $b_1 \le 3$; we proceed through the possibilities for $b_1$. If $b_1 \leq 2$, then since neither (1a) nor (1b) holds, we have $a_2 \geq 2$ and $b_1=b_2=b_3=2$, so $a \geq 3$ and the equality $\sum \alpha = \sum \beta$ implies that $n=m$, that $\alpha = (a,2,1,\dots,1)$, and that $\beta$ has $a$ terms equal to 2 and $n-a$ terms equal to $1$. Here $\crosspair$ Rao-contains $\{(2,2,1,1),(2,2,1,1)\}$, a contradiction. Hence we take $b_1=3$; here $1\cdot(m-1) + 1\cdot2 + (n-2)\cdot 1 \leq \sum \alpha = \sum \beta \leq 1\cdot3 + (m-2)\cdot 2 + 1\cdot 1$, yielding $n \le m+1$. If $n = m+1$, then the equality forces $\alpha = (m-1, 2, 1, \ldots, 1)$ and $\beta = (3,2, \ldots, 2, 1)$, while if $n = m$, then it is possible to have $\beta$ as above and $\alpha = (m-1, 3, 1, \ldots, 1)$ or $\alpha = (m-1, 2, 2, 1, \ldots, 1)$, or to have $\beta = (3, 2, \ldots, 2, 1, 1)$ and $\alpha = (m-1, 2, 1,\ldots, 1)$. In these four cases, $\crosspair$ respectively Rao-contains $\{(2,2,1,1,1),(3,2,2)\}$, $\{(3,3,1,1),(3,2,2,1)\}$, $\{(3,2,2,1),(3,2,2,1)\}$, and $\{(3,2,1,1),(3,2,1,1)\}$, which appear in \eqref{eq: split cross degree lists}. (To see this, consider the cross-degree sequence pairs formed when the high-degree term and $m-4$ terms equal to 1 in $\alpha$ are successively decremented by $m-4$ terms equal to 2 from $\beta$.) Hence no target cross-degree pairs satisfying (I.b.1) are Rao-minimal.

If $\crosspair$ satisfies condition (II.b.1), then we can apply condition (I.b.1) to the complementary pair $\{\overline{\alpha}, \overline{\beta}\}$ to see that $\crosspair$ is not Rao-minimal.

Suppose next that $\crosspair$ satisfies (I.b.2) for some term $a$ in $\alpha$, so $\beta$ has $a+1$ terms equal to 2 and $m-a-1$ terms equal to 1. Since $\crosspair$ is forbidden, it cannot satisfy condition (3b) from Theorem~\ref{prop:degree_characterization_split}, so $a_1 \leq m-2$. Hence $a \leq m-2$ as well, and we compute $\sum \beta = m+a+1 \leq 2m-1$. Let $\redpair$ be the sequence pair resulting from the reduction $R_{b_m,\beta}$; we will show that $\redpair$ is a target sequence pair. Notice that $\redpair$ has no terms in $\beta'$ equal to $0$ or $|\alpha'|$ (since $\beta$ does not), and it has no term in $\alpha'$ equal to $0$ or $|\beta'|$, since $2 \leq a_2 \leq a_1 \leq m-2$. By Corollary~\ref{cor: indecomp redpair}, $\redpair$ is indecomposable. We now check $\redpair$ against the conditions in Theorem~\ref{prop:degree_characterization_split}. Note that we must check (3a) and (4a) here since the Kleitman-Wang reduction is indexed in $\beta$ and not $\alpha$. Clearly (1b), (2b), (3a), and (4b) cannot hold because of $\beta$'s structure and our bounds on $m$ and $n$. If $\redpair$ satisfies (1a), then since $a_2 \geq 2$ we have $a_1 = a_2 = 2$ and $a_3 = 1$. This yields $\alpha = (2,2,1, \ldots, 1)$ and either $\beta = (2,2,2,1, \ldots, 1)$ or $\beta = (2,2,1, \ldots, 1)$; the length of $\beta$ is determined by the relation $\sum\alpha = \sum \beta$. In either case, it is straightforward to see that the sequence $\{(2,2,1,1),(2,2,1,1)\}$ from \eqref{eq: split cross degree lists} is Rao-contained by $\crosspair$. If $\redpair$ instead satisfies (2a), then $a_{n-1} \geq a'_{n-1} = m-2$, so we find $\sum \alpha \ge (n-1)(m-2) +1\cdot 1>2m-1\geq\sum\beta$, a contradiction. If $\redpair$ satisfies (3b), then $a_1 = a_2 = m-2$, so $2m-1 \geq \sum \beta = \sum \alpha \ge 2(m-2) + (n-2)\cdot 1=2m+n-6$. Hence $n = 5$, and equality, together with the condition that $4 \leq m \leq n$, implies that $\crosspair$ is either $\{(2,2,1,1,1),(2,2,2,1)\}$ or $\{(3,3,1,1,1),(2,2,2,2,1)\}$, both of which Rao-contain $\{(2,2,1,1),(2,2,1,1)\}$ from \eqref{eq: split cross degree lists}. Finally, if $\redpair$ satisfies (4a), then $a_n \geq m-3$ and $b_{m-1} = 1$, so $n(m-3) \le \sum \alpha = \sum \beta \leq 2m-2$; we obtain $(m-3)(n-2)\leq 4$, which holds only when holds when $m=4$ and $n \le 6$. However, the bounds on $\sum \alpha$ and $\sum \beta$ also force $\sum \alpha = 6$, which is impossible as $a_2 \geq 2$.

If $\crosspair$ satisfies condition (II.b.2), then the complementary pair $\{\overline{\alpha}, \overline{\beta}\}$ satisfies (I.b.2), showing that $\crosspair$ is not Rao-minimal. Note also that the disjunctions ``(I.b.1) or (I.b.2)'' and ``(II.b.1) or (II.b.2),'' applicable to $\crosspair$, have been seen to be equivalent to Theorem~\ref{prop:degree_characterization_split}'s conditions (1b) and (2b) on a reduced pair $\redpair_{a,\alpha}$. We may then assume henceforth that whenever we check a reduced pair $\redpair_{a,\alpha}$ to see if its realization is in HCU (as it should be for $\crosspair$ to be Rao-minimal), it suffices to check conditions other than (1b) or (2b) (or (3a) or (4a), as established earlier).

Suppose now that $\crosspair$ satisfies (I.a), so $a_3 = \ldots = a_n = 1$. We will handle this this possibility with two choices for $\redpair$, depending on the size of $b_1$. Suppose first that $b_1 \geq 4$ or that $b_1=b_2=3$. In this case let $\redpair = \redpair_{a_n,\alpha}$. Clearly no term of $\alpha'$ equals $0$ or $|\beta'|$, and no term of $\beta'$ equals $0$; more care is needed to show that no term of $\beta'$ equals $|\alpha'| = n-1$. If instead this were to happen, then $b_1=b_2 = n-1$, and $2(n-1)+(m-2)\cdot 1 \leq \sum \beta = \sum \alpha \leq 2(m-1)+(n-2)\cdot 1$, which implies that $n \leq m$ and hence $m=n\geq 5$. Equality in the inequalities above requires that $\alpha = \beta = (n-1, n-1, 1, \ldots, 1)$, and $\crosspair$ now fails the condition of Lemma \ref{prop:graphic_Gale_Ryser} at $k = 2$, a contradiction. Thus $\beta'$ contains no terms equal to $|\alpha'|$, and $b_2 \leq n-2$. Turning now to Theorem~\ref{prop:degree_characterization_split} as it applies to $\redpair$, we verify that $\redpair$ is forbidden by checking conditions (1a), (2a), (3b), and (4b). Indeed, (1a) cannot hold, since it does not hold for $\crosspair$, and (2a) fails since $m \geq 4$. Since $b_1\geq 4$ or $b_1=b_2\geq 3$, we have $b'_1 > 2$ and hence (3b) cannot hold. If $\redpair$ satisfies (4b), then $b_m \geq n-3$ and $b_1 \geq n-2$, so $1\cdot (n-2)+(m-1)(n-3) \leq \sum \beta = \sum \alpha \leq 2(m-1)+(n-2)\cdot 1$, which yields $(m-1)(n-5) \leq 0$. Since $m \geq 4$ and $n \geq 5$, we then have $n=5$ and equality in the degree sum inequalities forces $\alpha = (m-1, m-1, 1, 1, 1)$ and $\beta = (3,2,\dots,2)$. This contradicts our hypothesis that $b_1 \geq 4$ or $b_1=b_2=3$; we conclude that $\redpair$ is forbidden, so $\crosspair$ is not Rao-minimal. 

Continuing with case (I.a), now suppose that $b_1 \leq 3$ and $b_2 \leq 2$, and let $\redpair = \redpair_{b_m,\beta}$. No term of $\beta'$ equals $0$ or $|\alpha'|$, and since $a_2 \geq 2$, no term of $\alpha'$ equals 0. To see that no term of $\alpha'$ is equal to $|\beta'|$, note that otherwise would require that $a_1=a_2=m-1$ and $b_m=1$, yielding $2(m-1)+(n-2) \cdot 1 = \sum \alpha = \sum \beta \leq 1 \cdot 3+(m-2)\cdot 2+1\cdot 1$, a contradiction. Considering the conditions from Theorem~\ref{prop:degree_characterization_split} as applied to $\redpair$, we see that (1b) cannot hold for $\redpair$ since it does not hold for $\crosspair$, and the sizes of $m$ and $n$ quickly preclude (2a), (2b), (3a), (4a), or (4b) from happening. It remains to check the cases when $\redpair$ satisfies (1a) or (3b). If (1a) holds for $\redpair$, then either $b_m=1$ and $a_1=a_2=2$, or $b_m=2$ and $a_2 = 2$. In the former case, where $\alpha=(2,2,1,\dots,1)$, one can verify that $\crosspair$ Rao-contains $\{(2,2,1,1),(2,2,1,1)\}$. In the latter case we have $\alpha = (a_1, 2, 1, \ldots, 1)$ and $\beta = (b_1, 2, \ldots, 2)$. Since $\crosspair$ does not satisfy condition (3b) from Theorem~\ref{prop:degree_characterization_split}, if $b_1=2$ then $a_1\leq m-2$, and $\crosspair = \{(a_1,2,1,...,1),(2,...,2)\}$, which again Rao-contains $\{(2,2,1,1),(2,2,1,1)\}$; this can be seen by induction on $m$, where the induction step involves taking a realization where a vertex with cross-degree $b_m = 2$ is adjacent to vertices with cross-degrees $a_1,1$ (if $a_1>2$) or $1,1$. Assume now that $b_1 = 3$, so $\alpha = (a_1, 2, 1, \ldots, 1)$ and $\beta = (3, 2, \ldots, 2)$; this cross-degree sequence pair Rao-contains $\{(2,2,1,1,1),(3,2,2)\}$, since $\sum \alpha = \sum \beta$ forces $m+2 \leq n \leq 2m-1$ and we may use $m-3$ terms of $2$ from $\beta$ to iteratively decrement from $\alpha$ both a term equal to 1 and the term of highest degree. This shows that $\crosspair$ is not Rao-minimal in this case. If $\redpair$ instead satisfies (3b), then $2 \geq b'_1 = b_1 \geq 2$; since $\crosspair$ itself does not satisfy (3b), we must have $a_1 < m-1$, but since $a'_1 = m-2$, we find that $a_1=a_2=m-2$ and $b_m=1$. Then $2m+n-6 = \sum \alpha = \sum \beta \leq (m-1) \cdot 2 +1\cdot 1$, and we obtain $n=5$. The pair $\crosspair$ is then $\{(3,3,1,1,1), (2,2,2,2,1)\}$, which Rao-contains $\{(2,2,1,1),(2,2,1,1)\}$, so $\crosspair$ is not Rao-minimal.

Proceeding as in previous cases, we note that if $\crosspair$ satisfies condition (II.a), then $\{\overline{\alpha},\overline{\beta})$ satisfies the case (I.a) just considered and hence $\crosspair$ is not Rao-minimal. Furthermore, the respective conditions (I.a) and (II.a), applicable to $\crosspair$, are equivalent to the conditions (1a) and (2a) from Theorem~\ref{prop:degree_characterization_split} on a reduced pair $\redpair_{a,\alpha}$, so we henceforth need not check conditions (1a) or (2a) (or the previously considered conditions (1b), (2b), (3a), (4a)) when considering whether a reduced pair $\redpair_{a,\alpha}$ belongs to HCU.

Suppose that $\crosspair$ satisfies (III.b) for some term $a=a_i$ of $\alpha$, so $b_1 = 3$, $b_{a+1} \leq 2$, and $a_1 = m-1$, while $i \geq 2$ or $a_2 = m-1$. We may assume that $a_3 \ge 2$ since we have considered the case (I.a) above. We consider first the subcase in which $b_1=\cdots=b_{a_n+1}=3$. Let $\redpair=\redpair_{a_n,\alpha}$. Clearly no term of $\alpha'$ is equal to $0$ or $|\beta'|$, and the values $b_1=b_{a_{n}}=3$ ensure that no term of $\beta'$ is equal to $|\alpha|$ or $0$. By Corollary~\ref{cor: indecomp redpair}, $\redpair$ is indecomposable. Checking $\redpair$ for the conditions of Theorem~\ref{prop:degree_characterization_split} not previously handled, we see that $b'_1 > 2$, so (3b) does not hold. If $\redpair$ satisfies (4b), then $a_{n-1} = 1$ and $b_m \geq n-3$; since (III.b) requires the last term of $\beta$ to be at most 2, this forces $n=5$ and $b_m = 2$. Likewise, $a_n =1$, so $b_2=b_{a_n+1} = 3$. Listing the possibilities shows that there are six cross-degree sequence pairs $\crosspair$ with $\alpha = (m-1,a_2,a_3,1,1)$ and $\beta = (3,3,b_3,b_4)$ or $\beta = (3,3,b_3,b_4,b_5)$, where $a_2 \geq 2$ and the last term of $\beta$ is no more than 2, and $\sum \alpha = \sum \beta$. These pairs are $\{(3, 3, 2, 1, 1), (3, 3, 2, 2)\}$, 
$\{(3, 3, 3, 1, 1), (3, 3, 3, 2)\}$, 
$\{(4, 3, 3, 1, 1), (3, 3, 2, 2, 2)\}$,
$\{(4, 4, 2, 1, 1), (3, 3, 2, 2, 2)\}$, 
$\{(4, 4, 3, 1, 1), (3, 3, 3, 2, 2)\}$, and 
$\{(4, 4, 4, 1, 1), (3, 3, 3, 3, 2)\}$. Observe that each of these Rao-contains $\{(3,2,1,1)(3,2,1,1)\}$ or $\{(3,3,1,1)(3,2,2,1)\}$, so in this subcase $\crosspair$ is not Rao-minimal.

Still assuming that $\crosspair$ satisfies (III.b), consider the subcase in which $b_{a_n+1}\leq 2$. Here $\sum \beta < 3m \leq 3n$, so $a_n \leq 2$ and hence $b_3 \leq 2$. Let $\redpair=\redpair_{b,\beta}$, where $b=b_m$. Clearly $\beta'$ contains no terms equal to $0$ or $|\alpha'|$, and since $b_m \leq 2$ and $a_2 \geq 2$, clearly $\alpha'$ contains no 0 term. If $a'_1=m-1=|\beta'|$, then $a_1=a_2=m-1$; we continue to assume $a_3 \ge 2$. This leads to $\sum \alpha \ge 2(m-1) + 1 \cdot 2 + (n-3)\cdot 1 = 2m + n - 3$ and $\sum \beta \le 2 \cdot 3 + (m-2) \cdot 2 = 2m + 2$; we conclude that $n=5$ and $\crosspair = \{(m-1,m-1,2,1,1),(3,3,2,\dots,2)\}$, but then $a'_1 = m-2$. Hence we may assume by Corollary~\ref{cor: indecomp redpair} that $\redpair$ is indecomposable and check $\redpair$ against the conditions in Theorem~\ref{prop:degree_characterization_split}. Conditions (1a) and (3b) fail to hold in light of our known values of terms in $\crosspair$, and because $n \geq 5$, conditions (2b), (3a) and (4b) similarly fail. Since $\crosspair$ does not satisfy (1b), neither can $\redpair$. If $\redpair$ satisfies (4a), then knowing that $b_m \leq b_{m-1}=1$ and $m-3 \leq a_n \leq 2$ forces $m \leq 5$. Since $a_1=m-1$ and $a_3 \geq 2$, while $b_1 = 3$ and $b_3 \leq 2$ and $b_{m-1}\leq 1$, we find \[m+n = 1 \cdot (m-1) + 2\cdot 2 + (n-3)\cdot 1 \leq \sum \alpha =\sum \beta \leq 2\cdot 3 + (m-4) \cdot 2 + 2\cdot 1 = 2m.\] Since $n \geq m$ we see that $m=n=5$ and equality must hold in each of the inequalities, forcing $a_n = 1$ and $b_{2}=3$, which contradicts the assumption that $b_{a_n+1} \leq 2$. Hence $\redpair$ cannot satisfy (4a). If instead $\redpair$ satisfies (2a), then $a_{n-1} \geq m-2$; then $2m+2 \geq \sum \beta = \sum \alpha \geq 1 \cdot (m-1)+(n-2)(m-2)+1 \cdot 1$. This would imply that $(m-2)(m-3) \leq 4$, which forces $m=2$ and $n=3$ and $\crosspair = \{(3,2,2,2,1),(3,3,2,2)\}$, contradicting $b_{a_n + 1} \le 2$. 

If $\crosspair$ satisfies condition (IV.b), then the complementary pair $\{\overline{\alpha}, \overline{\beta}\}$ satisfies (III.b), showing again that $\crosspair$ is not Rao-minimal.

If none of conditions (I.a) through (IV.b) hold for any reduction $R_{a_i,\alpha}$, then for each term $a$ of $\alpha$, the reduction $R_{a,\alpha}$ must satisfy condition (V) or (VI). However, this leads to a contradiction. First, if the reduced pairs $\redpair_{a_i,\alpha}$ satisfy (V) for all terms $a_i$ in $\alpha$, then $b_{a_n} = \dots = b_m = 1$; since $b_2 \ge 2$, we have $a_n \ge 3$. Hence $\sum_{i=1}^{a_n - 1} b_i = \sum \beta - (m - a_n + 1)$ and $\sum_{i=1}^n \min\{a_i, a_n - 1\} = n(a_n - 1)$. By Lemma \ref{prop:graphic_Gale_Ryser}, the $(a_n - 1)$th Gale--Ryser inequality implies that $\sum \beta \le m-a_n+1 +na_n - n \le na_n - a_n + 1$. Since $\sum \beta = \sum \alpha \ge na_n$ and $a_n \ge 3$, this is a contradiction. 

Suppose instead that condition (V) fails for some term $a_i$ in $\alpha$. Let $t$ be the least subscript on a term $a_t$ of $\alpha$ such that $b_{a_t} > 1$. By previous assumptions, condition (VI) then holds for $\redpair_{a_{t}+1,\alpha}$. Thus $b_{a_t + 1} = n-1$, $b_{a_{t-1}} = 1$, and $a_{t-1} \ge a_t + 2$. By minimality of $t$, $a_{t-1} > a_t$. Via the $(a_t + 1$)th Gale--Ryser inequality, Lemma \ref{prop:graphic_Gale_Ryser} implies that $$\sum_{i=1}^{a_t + 1} b_i = (n-1)(a_t + 1) \le \sum_{i=1}^n \min\{a_t + 1, a_i \} = (t-1)(a_t + 1) + \sum_{i=t}^n a_i.$$ We manipulate this inequality to show that $\sum_{i=t}^n a_i \geq (n-t)(a_t + 1)$. Moreover, the monotonicity of $\alpha$ implies $(n-t+1)a_t \geq \sum_{i=t}^n a_i$. It follows that $a_t \geq n-t$, that is, $n \le t + a_t$. However, the $(a_{t-1}-1)$th Gale--Ryser inequality implies $$\sum \beta - (m - a_{t-1} + 1) = \sum _{i=1}^{a_{t-1}-1} b_i \le \sum_{i=1}^n \min\{a_{t-1}-1, a_i \} = (t-1)(a_{t-1}-1) + \sum_{i=t}^n a_i = \sum \alpha - \sum_{i=1}^{t-1} (a_i - a_{t-1} + 1).$$ This is equivalent to $ \sum _{i=1}^{t-1} (a_i - a_{t-1}) \le m - a_{t-1} - t + 2$. Since the left side is non-negative, we have $a_{t-1} + t \le m + 2$, or equivalently, $a_t + t \le m \le n$. 

We have shown the $(a_t + 1$)th Gale--Ryser inequality implies $n \le t + a_t$ and the $(a_{t-1}-1)$th Gale--Ryser inequality implies $n \ge t + a_t$. Hence $n=t+a_t$, both Gale--Ryser inequalities hold with equality, and so do all other bounds used in their computation. Specifically, $a_{t-1} = a_t + 2$, so $n-1 = b_{a_t+1} > b_{a_t+2} = b_{a_{t-1}} = 1$. Thus Lemma \ref{prop:cross_degree_GaleRyser}, applied at index $a_t+1$ of $\beta$, implies that $\crosspair$ is decomposable, for a contradiction. This completes the proof.
\end{proof}

\section{A forbidden induced subgraph characterization of $HCU$} \label{sec: forbidden subgraphs}

We use Barrus and Hartke's \cite{BaHa15} theory of degree-sequence-forcing sets to establish a forbidden induced subgraph characterization of $HCU$. We first define a class of graphs to be \textit{characterized by its degree sequences} if for every degree sequence $d$, if one realization belongs to the class, then every realization belongs to the class. A set $\mathcal{F}$ of graphs is \emph{degree-sequence-forcing} (\emph{DSF}) if the set of $\mathcal{F}$-free graphs (in the induced subgraph context) is characterized by its degree sequences. In other words, a graph $G$ is $\mathcal{F}$-free if and only if every other realization of the degree sequence of $G$ is $\mathcal{F}$-free. If a set $\mathcal{F}$ is not $DSF$, then there exists a degree sequence $d$ with realizations $G$ and $G'$ such that $G$ is $\mathcal{F}$-free and $G'$ is not. As in \cite{BaHa15}, we call such a set an \emph{$\mathcal{F}$-breaking pair}. When $\mathcal{F}$ is finite, it is possible to identify whether it is DSF without needing to check all degree sequences. We present this result in the following lemma. 

\begin{lemma} [\cite{BaHa15}, Lemma 2.1]
\label{prop:DSF_Barrus_Hartke}
    Let $\mathcal{F}$ be a set of graphs, each with at most $n$ vertices. If the subset of $\mathcal{F}$-free graphs of order up to $n + 2$ is characterized by its degree sequences, then the whole set of $\mathcal{F}$-free graphs is characterized by its degree sequences, that is, $\mathcal{F}$ is DSF.
\end{lemma}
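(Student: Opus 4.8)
The plan is to argue by contraposition: assuming $\mathcal{F}$ is not DSF, I will produce an $\mathcal{F}$-breaking pair whose two graphs have at most $n+2$ vertices. So suppose $\mathcal{F}$ is not DSF. By definition there is a graphic sequence $\pi$ with realizations $G$ and $G'$ such that $G$ is $\mathcal{F}$-free and $G'$ is not. The first tool I would invoke is the classical fact that any two graphs with the same degree sequence are joined by a sequence of \emph{$2$-switches} (each one deleting a pair of disjoint edges $ab,cd$ and inserting the pair $ac,bd$, leaving every vertex degree fixed); relabelling $G'$ along a degree-preserving bijection onto $V(G)$, we obtain a chain $G = H_0, H_1, \dots, H_k = G'$ in which consecutive graphs differ by a single $2$-switch and every $H_i$ realizes $\pi$. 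Since $H_0$ is $\mathcal{F}$-free while $H_k$ is not, there is an index $i$ with $H := H_i$ being $\mathcal{F}$-free and $H' := H_{i+1}$ not; after relabelling we may write the switch from $H$ to $H'$ as deleting $ab, cd$ and inserting $ac, bd$.

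Next I would localize the obstruction. Fix an induced copy of some $F \in \mathcal{F}$ in $H'$, on a vertex set $S$ with $|S| \le n$. The edge sets of $H$ and $H'$ differ only in the four pairs $ab, cd, ac, bd$, so if none of those pairs lay entirely inside $S$ we would have $H[S] = H'[S]$, forcing $H$ to contain an induced $F$, which is impossible. Hence at least one of those four pairs is contained in $S$, so at least two of $a,b,c,d$ already belong to $S$. Setting $W := S \cup \{a,b,c,d\}$ therefore gives $|W| \le n + 2$, and $H'[W]$ still contains the induced copy of $F$ (since $S \subseteq W$), while $H[W]$, being an induced subgraph of the $\mathcal{F}$-free graph $H$, is itself $\mathcal{F}$-free.

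Finally I would verify that $H[W]$ and $H'[W]$ have the same degree sequence, which is the one place where the inclusion $\{a,b,c,d\}\subseteq W$ is used: passing from $H$ to $H'$, each of $a,b,c,d$ loses exactly one neighbour lying in $\{a,b,c,d\}$ and gains exactly one, so its degree inside $W$ is unchanged, while every other vertex of $W$ retains all of its $W$-neighbours. Thus $\{H[W], H'[W]\}$ is an $\mathcal{F}$-breaking pair of order at most $n+2$, contradicting the hypothesis that the $\mathcal{F}$-free graphs of order up to $n+2$ are characterized by their degree sequences; this proves the contrapositive and hence the lemma. I expect the only genuinely delicate point to be the counting in the middle paragraph — establishing that at least two of the switch vertices must already lie in $S$, which is exactly what improves the naive bound $n+4$ to $n+2$; the $2$-switch connectivity theorem and the degree bookkeeping are routine.
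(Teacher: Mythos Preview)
Your argument is correct and is essentially the standard $2$-switch proof that the paper attributes to \cite{BaHa15}; the paper itself does not reprove the lemma but only notes that the argument rests on connecting realizations of a common degree sequence via $2$-switches, exactly as you do. The localization step showing $|W|\le n+2$ (because at least two switch vertices already lie in $S$) is the crux, and you have handled it cleanly.
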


The proof of Lemma \ref{prop:DSF_Barrus_Hartke} relies on a graph operation called a \textit{2-switch}: given a $4$-vertex subset of a graph with edges $ab, cd$ and non-edges $ad, bc$, a 2-switch deletes $ab, cd$ and adds $ad, bc$. Critically, given two graphs $G, G'$, there exists a finite sequence of 2-switches transforming $G$ to $G'$ if and only if $G$ and $G'$ have the same degree sequence \cite{FuHoMc65}. 

Since $HCU$ is hereditary by definition, there exists a set $\mathcal{F}_{HCU}$ of minimal forbidden induced subgraphs, such that $G \in HCU$ if and only if $G$ is $\mathcal{F}_{HCU}$-free. Since Theorems \ref{prop:degree_characterization_nonsplit} and \ref{prop:degree_characterization_split} give a degree sequence characterization of $HCU$, it follows that $\mathcal{F}_{HCU}$ is a DSF set. By Remark \ref{prop:Rao_min_indecomp_complement}, every realization of a Rao-minimal degree sequence is in $\mathcal{F}_{HCU}$. Let $\mathcal{Y}$ be the set of graphs shown in Figure \ref{fig:big_graphs} together with their complements and the realizations of each Rao-minimal sequence identified in Theorems \ref{prop:Rao_minimal_nonsplit} and \ref{prop:Rao_minimal_split}. Using Lemma \ref{prop:DSF_Barrus_Hartke}, we give a forbidden induced subgraph characterization of $HCU$.

\begin{figure}
\centering
  \includegraphics[height=4cm]{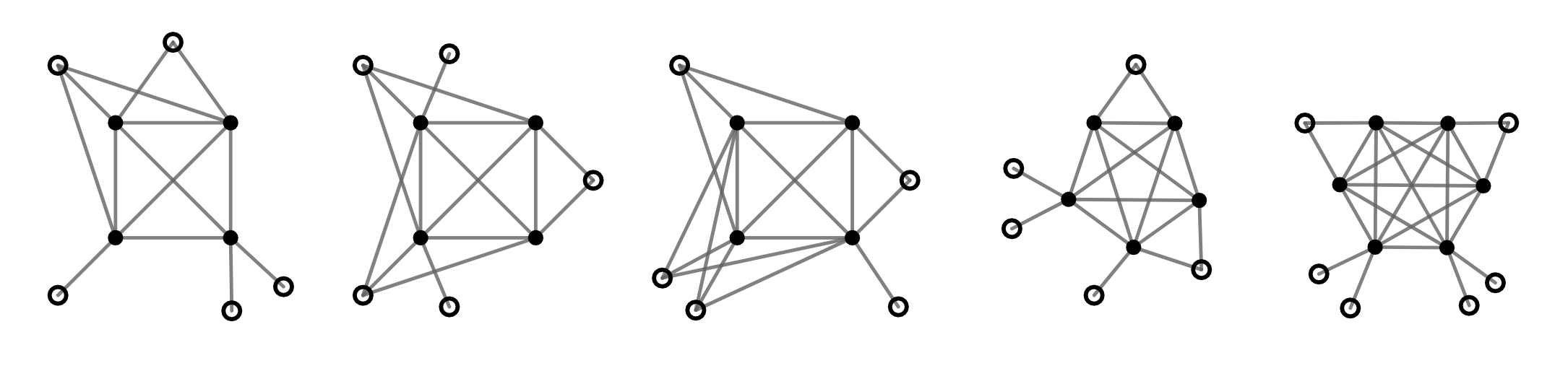}
  \caption{The five split graphs which, together with their complements, are the subset of $\mathcal{F}_{HCU}$ whose degree sequences are not Rao-minimal.}
  \label{fig:big_graphs}
\end{figure}

\begin{theorem}
    The minimal forbidden induced subgraphs of $HCU$ are the graphs shown in Figure \ref{fig:big_graphs}, their complements, and the graphs realizing the Rao-minimal forbidden degree sequences identified in Theorems \ref{prop:Rao_minimal_nonsplit} and \ref{prop:Rao_minimal_split}. That is, $\mathcal{F}_{HCU} = \mathcal{Y}$.
\end{theorem}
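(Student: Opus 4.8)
The plan is to apply Lemma~\ref{prop:DSF_Barrus_Hartke} to the finite set $\mathcal{Y}$. First I would pin down $n := \max\{|V(G)| : G \in \mathcal{Y}\}$; inspecting the lists, the graphs realizing Rao-minimal split sequences (Theorem~\ref{prop:Rao_minimal_split}) have $8$ vertices, and the graphs in Figure~\ref{fig:big_graphs} and their complements should be checked to have at most $8$ vertices as well, so $n = 8$ (I would state the precise value after verifying the figure). The strategy then splits into two claims: (A) every graph in $\mathcal{Y}$ is genuinely a forbidden induced subgraph for $HCU$, i.e.\ $\mathcal{Y} \subseteq \mathcal{F}_{HCU}$; and (B) $\mathcal{Y}$ is DSF, so that $\mathcal{Y}$-free coincides with $\mathcal{F}_{HCU}$-free, whence $\mathcal{F}_{HCU} = \mathcal{Y}$ by minimality. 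Claim (A) for the Rao-minimal realizations is immediate from Remark~\ref{prop:Rao_min_indecomp_complement}; for the five graphs in Figure~\ref{fig:big_graphs} and their complements I would invoke Theorems~\ref{prop:degree_characterization_nonsplit} and~\ref{prop:degree_characterization_split} to confirm each is not in $HCU$, and check directly (or cite a computation) that every proper induced subgraph has a degree sequence satisfying one of those theorems' conditions, so each is a \emph{minimal} forbidden subgraph.

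For Claim (B), by Lemma~\ref{prop:DSF_Barrus_Hartke} it suffices to show the $\mathcal{Y}$-free graphs of order at most $n+2 = 10$ are characterized by their degree sequences. Equivalently, I must rule out a $\mathcal{Y}$-breaking pair: two graphs $G, G'$ on at most $10$ vertices, with the same degree sequence, where $G$ is $\mathcal{Y}$-free but $G'$ is not. Here is where I would use the degree-sequence characterization already in hand: since $\mathcal{Y} \subseteq \mathcal{F}_{HCU}$ and (by Theorems~\ref{prop:degree_characterization_nonsplit} and~\ref{prop:degree_characterization_split} together with Lemmas~\ref{prop:HCU_closed_degree} and~\ref{prop:HCU_closed_degree_components}) membership in $HCU$ is determined by degree sequence, a graph is $\mathcal{F}_{HCU}$-free iff its degree sequence is that of an $HCU$ graph. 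So the real content is: \emph{$\mathcal{Y}$-free implies $\mathcal{F}_{HCU}$-free} on graphs of order $\le 10$. If $G$ is $\mathcal{Y}$-free but not $\mathcal{F}_{HCU}$-free, then $G$ contains some minimal forbidden induced subgraph $F \notin \mathcal{Y}$; such an $F$ is indecomposable (Corollary~\ref{prop:MFIS_indecomp_closed}), and its degree sequence is forbidden. Walking down from $F$'s degree sequence in the Rao poset (which is well-founded, being WQO by Chudnovsky--Seymour, or just because degree sequences have bounded length here), one reaches a Rao-minimal forbidden sequence $\pi \preceq \deg(F)$, i.e.\ $\deg(F)$ Rao-contains $\pi$. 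By Lemma~\ref{lem: equivalent Rao and forb subgr}, $F$ — and hence $G$ — contains as an induced subgraph some minimal realization in $\mathcal{M}(\{\pi\})$; the key point I need is that on graphs of order $\le 10$, these minimal realizations all lie in $\mathcal{Y}$. That reduces to checking, for each of the $6 + 14$ Rao-minimal sequences of Theorems~\ref{prop:Rao_minimal_nonsplit} and~\ref{prop:Rao_minimal_split}, that the only graphs $H$ of order $\le 10$ with $\deg(H)$ Rao-containing $\pi$ and minimal for that property are the realizations of Rao-minimal sequences together possibly with the graphs of Figure~\ref{fig:big_graphs} — and this is precisely the (finite) computational search the paper alludes to in Section~7.

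The main obstacle, and the step I would lean on a finite computer search for, is exactly this last verification: enumerating all graphs of order at most $10$ (or, via Lemma~\ref{prop:DSF_Barrus_Hartke}, checking breaking pairs up to order $n+2$) and confirming that no $\mathcal{Y}$-breaking pair exists — equivalently, that the graphs of Figure~\ref{fig:big_graphs} and their complements are the \emph{only} minimal forbidden subgraphs whose degree sequences fail to be Rao-minimal. Conceptually the argument is clean (DSF reduction $+$ degree-sequence characterization $+$ Rao-minimality), but the figure's five graphs are precisely the ``exceptional'' minimal forbidden subgraphs that a purely degree-sequence-and-Rao argument cannot predict, so their appearance must be certified by direct checking; I would present the search's scope (graphs up to $10$ vertices, $2$-switch orbits, and the finitely many Rao-minimal sequences) and state that it confirms $\mathcal{F}_{HCU} = \mathcal{Y}$, deferring the enumeration details to the computational appendix.
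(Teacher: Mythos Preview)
Your overall logical skeleton matches the paper's: show $\mathcal{Y}\subseteq\mathcal{F}_{HCU}$ directly, then argue that any element of $\mathcal{F}_{HCU}\setminus\mathcal{Y}$ would produce a $\mathcal{Y}$-breaking pair, and finally invoke Lemma~\ref{prop:DSF_Barrus_Hartke} to certify that $\mathcal{Y}$ is DSF via a finite computation. So conceptually you are on the right track.

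The genuine gap is numerical, and it matters. You guess that the graphs in Figure~\ref{fig:big_graphs} have at most $8$ vertices, giving $n+2=10$ as the search bound. In fact those five split graphs have up to $12$ vertices, so Lemma~\ref{prop:DSF_Barrus_Hartke} forces a check through order $14$. A brute-force search over all graphs on $14$ vertices is hopeless (there are on the order of $3\times 10^{16}$ of them), so simply ``deferring the enumeration details'' is not an option: the proof as you have sketched it does not terminate in practice.

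What the paper adds, and what your proposal is missing, is the reduction that makes the computation feasible. The search is split into two pieces. For the non-split part, every graph in $\mathcal{Y}_N$ (the non-split members of $\mathcal{Y}$) has at most $7$ vertices, and one shows via Corollary~\ref{prop:nonsplit_contains_nonsplit} and Tyshkevich decomposition that any $\mathcal{Y}$-breaking pair of non-split graphs yields a $\mathcal{Y}_N$-breaking pair of \emph{indecomposable} non-split graphs; hence it suffices to search through order $9$ in the non-split case. For the split part one really does need to go to order $14$, but there are only about $7\times 10^{7}$ split graphs on $14$ vertices, and further pruning by indecomposability and closure under complement/inverse makes this tractable. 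Without this split/non-split dichotomy and the invocation of Corollary~\ref{prop:nonsplit_contains_nonsplit}, your plan does not yield a proof.
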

\begin{proof}
First, we show $\mathcal{Y}$ is contained in $\mathcal{F}_{HCU}$. Using Lemmas~\ref{prop:EG_nonsplit_useful} and~\ref{prop:cross_degree_GaleRyser} and Theorems~\ref{prop:degree_characterization_nonsplit} and~\ref{prop:degree_characterization_split}, it is straightforward to verify from the degree sequences that each graph in $\mathcal{Y}$ is indecomposable and not in $HCU$. We can also see that any proper induced subgraph of these graphs is in $HCU$, implying that $\mathcal{Y} \subseteq \mathcal{F}_{HCU}$. 

We now show $\mathcal{F}_{HCU}$ is contained in $\mathcal{Y}$. Suppose for a contradiction that there exists some graph $G$ in $\mathcal{F}_{HCU}$ but not in $\mathcal{Y}$. This implies the existence of a $\mathcal{Y}$-breaking pair. Let $d$ be the degree sequence of $G$, and since $G$ is not in $HCU$, there exists a Rao-minimal forbidden sequence $r$ that is Rao-contained by $d$. Thus there exists some realization $H$ of $r$ induced in a realization $G'$ of $d$. The class $\mathcal{Y}$ already includes every realization of the Rao-minimal sequences of $HCU$, so $H \in \mathcal{Y}$. By assumption, $G$ is $\mathcal{Y}$-free, but since $H$ is induced in $G'$, it follows that $G'$ is not $\mathcal{Y}$-free. Therefore $(G, G')$ is a $\mathcal{Y}$-breaking pair, so we know that $\mathcal{Y}$ is not DSF. 

Hence if $\mathcal{Y}$ is DSF, then $\mathcal{Y} = \mathcal{F}_{HCU}$, so it remains to show that $\mathcal{Y}$ is DSF. By Lemma \ref{prop:DSF_Barrus_Hartke}, it suffices to check degree sequences on 14 or fewer vertices. This is unwieldy, however - there are 29054155657235488 graphs on 14 vertices \cite{OEISA000088} - so we simplify the computational work in several ways. 

First, we argue that for degree sequences of non-split graphs, it suffices to check graphs up to 9 vertices. Let $\mathcal{Y_N}$ be the set of non-split graphs in $\mathcal{Y}$ and let $\mathcal{Y_S}$ be the set of split graphs in $\mathcal{Y}$. Since the graphs in Figure \ref{fig:big_graphs} are split graphs, every graph in $\mathcal{Y_N}$ realizes a Rao-minimal sequence on 7 or fewer terms. Let $(G,G')$ be a $\mathcal{Y}$-breaking pair of non-split graphs with decompositions $G = G_n \circ \ldots \circ G_0$ and $G' = G'_n \circ \ldots \circ G'_0$, and recall from Theorem \ref{prop:degree_sequence_decomposition} that $G_i$ and $G'_i$ have the same degree sequence. Since all graphs in $\mathcal{Y}$ are indecomposable, there exists $i$, $0 \le i \le n$, such that $(G_i,G'_i)$ is a $\mathcal{Y}$-breaking pair. We thus assume without loss of generality that $(G,G')$ is an indecomposable pair of non-split graphs where $G$ contains an induced element of $\mathcal{Y}$ and $G'$ does not. Then the degree sequence $\pi$ of $G$ and $G'$ is indecomposable, non-split, and not in $HCU$, so Corollary \ref{prop:nonsplit_contains_nonsplit} implies there exists a realization $G''$ of $\pi$ that contains an induced realization of a non-split Rao-minimal sequence. Therefore $(G',G'')$ is a $\mathcal{Y_N}$-breaking pair. By the proof of Lemma \ref{prop:DSF_Barrus_Hartke}, there exists a $\mathcal{Y_N}$-breaking pair on at most 9 vertices, which arises as we navigate via 2-switches from $G'$ to $G''$.
 
We use the algorithm shown at the end of the section to verify that each non-split degree sequence on $9$ or fewer vertices yields no $\mathcal{Y_N}$-breaking pairs. If a $\mathcal{Y_N}$-breaking pair were to exist, then it would contain as an induced subgraph a element of $\mathcal{F}_{HCU}$ but not $\mathcal{Y_N}$. Hence it is equivalent to check that the all non-split graphs in $\mathcal{F}_{HCU}$ on at most $9$ vertices are already elements of $\mathcal{Y_N}$. We know that all indecomposable graphs in $HCU$ on $5$ vertices are Rao-minimal, so beginning with $n=6$, we compute all forbidden degree sequences and check if their realizations include any graphs in $\mathcal{F}_{HCU}$. Every forbidden degree sequence on $n$ vertices is either Rao-minimal or greater than some forbidden degree sequence on $n-1$ vertices in the Rao ordering. We efficiently compute the set of forbidden degree sequences on $n$ vertices by taking the union of the non-split Rao-minimal sequences on $n$ vertices with the set of sequences on $n$ vertices that Rao-contain a forbidden sequence on $n-1$ vertices. From this list, we retain only the non-split sequences. By Corollary \ref{prop:MFIS_indecomp_closed}, all elements of $\mathcal{F}_{HCU}$ are indecomposable, so we further test only the indecomposable sequences. Corollary \ref{prop:MFIS_indecomp_closed} also implies that $\mathcal{F}_{HCU}$ is closed under complementation, so we keep only the first of these pairs of sequences. If a pair were $\mathcal{Y_N}$-breaking, its complement would be as well, so this allows us to test close to half as many sequences as we would otherwise. 

If a degree sequence $\pi$ on $n$ terms is the degree sequence of some $G \in \mathcal{F}_{HCU}$, then all induced subgraphs of $G$ on $n-1$ vertices are contained in $HCU$. Let $v(\pi)$ be the number of different terms in $\pi$; thus $G$ must have at least $v(\pi)$ non-isomorphic induced subgraphs on $n-1$ vertices in $HCU$. Before computing the realizations of $\pi$, we check for this by computing the number of graphic decrementations of $\pi$ in $HCU$.  This is counted by comparison with the list of forbidden degree sequences on $n-1$ vertices, since the criteria in Theorems \ref{prop:degree_characterization_nonsplit} and \ref{prop:degree_characterization_split} apply only to indecomposable sequences. If there are fewer than $v(\pi)$ such sequences, then no realization of $\pi$ is in $\mathcal{F}_{HCU}$. 

If there are at least $v(\pi)$ such sequences, then we compute all realizations of $\pi$, using an algorithm developed by Zolt\'{a}n Kir\'{a}ly \cite{Ki12}. For each realization $G$, we compute the degree sequence of each induced subgraph on $n-1$ vertices. If none of these are forbidden, then $G \in \mathcal{F}_{HCU}$. We add $G$ and $\overline{G}$ to the set of minimal forbidden induced subgraphs on $n$ vertices. 

Incrementing $n$, we repeat the entire above procedure until there are no minimal graphs for two consecutive values of $n$. The code terminates, returning $n$ and identifying all non-split graphs in $\mathcal{F}_{HCU}$. We ran this algorithm in Mathematica in twenty minutes. Fortunately, it returned $n=9$ and the minimal graphs were precisely the set $\mathcal{Y_N}$. 

We proceed to verify the same for $\mathcal{Y_S}$. While we do need to check degree sequences of order up to $14$, we know that any $\mathcal{Y}$-breaking pair of non-split graphs is a $\mathcal{Y_N}$-breaking pair, as shown above. Therefore, we need only check split degree sequences of orders $8$ to $14$. There are only 67997750 split graphs on $14$ vertices \cite{Ro00}, so this is a more manageable task. Our algorithm has three small differences from the non-split version: first, we calculate only split forbidden degree sequences. If $\pi$ is non-split, then all augmentations of $\pi$ are non-split, so in order to calculate the split forbidden degree sequences on $n$ vertices, we find the union of the set of augmentations of $\pi$ over all split forbidden degree sequences $\pi$ on $n-1$ vertices, then keep only the split sequences. Second, by Lemma \ref{prop:HCU_closed_complements}, a split graph $G$ is in $\mathcal{F}_{HCU}$ if and only if $\overline{G}, \GI,$ and $\overline{\GI}$ are as well. We group degree sequences with their inverses, complements, and inverse-complements, keeping one representative. Third and finally, all induced subgraphs of split graphs are split graphs, so when counting the number of graphic decrementations of $\pi$ in $HCU$, it suffices to compare to the list of forbidden split degree sequences. Since the algorithm is otherwise identical, it holds by the same proof. The program ran in two days and returned $14$, confirming that there are no $\mathcal{Y_S}$-breaking pairs of split graphs on $8$ to $14$ vertices. We therefore conclude that a graph $G$ is in $HCU$ if and only if it contains no element of $\mathcal{Y}$. 
\end{proof}

\noindent
{\ttfamily PROCEDURE \textsc{Minimal Forbidden Induced Subgraphs}}
\begin{adjustwidth}{0.5cm}{}
{\ttfamily FOR ALL} $n$, $5 \le n \le 8$, {\ttfamily SET} RaoMin($n$) to the set of Rao-minimal degree sequences of length $n$. \\
{\ttfamily IF} seeking non-split minimal forbidden induced subgraphs, {\ttfamily SET} $k=0$; \\
{\ttfamily IF} seeking split graphs, {\ttfamily SET} $k=1$.\\
{\ttfamily SET}  $n=5$ if $k =0$ and $n=8$ if $k = 1$. \\
{\ttfamily SET} Selected($n$) to the subset of non-split (if $k=0$) or split (if $k=1$) \\degree sequences in RaoMin($n$). \\
{\ttfamily SET}  Minimal($n$)  {\ttfamily TO} the set of graphs realizing sequences in RaoMin($n$). \\
{\ttfamily SET} Minimal($n-1$) {\ttfamily TO} $\emptyset$. \\
{\ttfamily UNTIL} Minimal($n-1$) and Minimal($n$) are each equal to $\emptyset$:  
\end{adjustwidth}
    \begin{adjustwidth}{1cm}{}
    {\ttfamily INCREMENT} $n$. \\    
    {\ttfamily SET}  Forbidden($n$) {\ttfamily TO}  the union of RaoMin($n$) and the set of augmentations of every degree sequence in Selected($n-1$), ordered lexicographically. \\
    {\ttfamily SET} Selected($n$) to the subset of non-split (if $k=0$) or split (if $k=1$) degree sequences \\ in Forbidden($n$).  \\        
    {\ttfamily SET}  Indecomposable($n$)  {\ttfamily TO} the subset of indecomposable sequences in Selected($n$). \\
    {\ttfamily SET}  Equivalent($n$)  {\ttfamily TO}  the subset of Indecomposable($n$) containing only the first member of each equivalence class under complementation (if $k=0$) or under complementation and inversion (if $k=1$). \\
    {\ttfamily FOR} each degree sequence $\pi$ in Equivalent($n$):
    \end{adjustwidth}
        \begin{adjustwidth}{1.5cm}{}
        {\ttfamily SET}  Values {\ttfamily TO}  the list of terms appearing in $\pi$, without repetition. \\
        {\ttfamily SET}  HCU-Down  {\ttfamily TO}  the set of graphic decrementations of $\pi$ not contained in Selected($n-1$). \\
        {\ttfamily IF} $|$HCU-Down$|$ $\ge$ $|$Values$|$:
        \end{adjustwidth}
            \begin{adjustwidth}{2cm}{}
            {\ttfamily SET}  Graphs {\ttfamily TO} the set of realizations of $\pi$. \\
            {\ttfamily FOR} each $G$ in Graphs:
            \end{adjustwidth}
                \begin{adjustwidth}{2.5cm}{}
                {\ttfamily IF} $G$ contains no induced subgraph on $n-1$ vertices with degree sequence in Selected(n-1):
                \end{adjustwidth}
                    \begin{adjustwidth}{3cm}{}
                    {\ttfamily APPEND} $G$ and $\overline{G}$, up to isomorphism, as terms in Minimal($n$). 
                    \end{adjustwidth}
                \begin{adjustwidth}{2.5cm}{}
                {\ttfamily ENDIF}
                \end{adjustwidth}
            \begin{adjustwidth}{2cm}{}
            {\ttfamily ENDFOR}
            \end{adjustwidth}
        \begin{adjustwidth}{1.5cm}{}
        {\ttfamily ENDIF}
        \end{adjustwidth}
    \begin{adjustwidth}{1cm}{}
    {\ttfamily ENDFOR}
    \end{adjustwidth}
\begin{adjustwidth}{0.5cm}{}
{\ttfamily RETURN} $n$ and the union of Minimal($5$) (if $k = 0$) or Minimal($8$) (if $k=1$) through Minimal($n$).
\end{adjustwidth}

\end{document}